\theoremstyle{definition}
\newtheorem*{notation}{Notation}
\newtheorem{Definition}{Definition}[section]
\newtheorem{Theorem}[Definition]{Theorem}
\newtheorem{Proposition}[Definition]{Proposition}
\newtheorem{lemma}[Definition]{Lemma}
\newtheorem{remark}[Definition]{Remark}
\newcommand{\R}{\mathbb{R}}
\newcommand{\bH}{\boldsymbol{H}}
\renewcommand{\phi}{\varphi}
\newcommand{\bx}{\boldsymbol{x}}
\newcommand{\bu}{\boldsymbol{u}}
\renewcommand{\dd}{\text{d}}
\newcommand{\bcurl}{\mathbf{curl}}
\newcommand{\bbf}{\mathbf{f}}
\newcommand{\bg}{\mathbf{g}}
\newcommand{\bv}{\boldsymbol{v}}
\newcommand{\bL}{\boldsymbol{{L}}}
\newcommand{\bn}{\mathbf{n}}
\newcommand{\btau}{\boldsymbol{\tau}}
\newcommand{\bphi}{\boldsymbol{\phi}}
\newcommand{\bV}{\mathbf{V}(\Omega)}
\newcommand{\bT}{\mathbf{T}}
\newcommand{\bP}{\mathbf{P}}
\renewcommand{\bT}{\mathbf{T}}
\newcommand{\bI}{\mathbf{I}}
\newcommand{\N}{\mathbb{N}} 
\renewcommand{\P}{\mathbb{P}} 
\renewcommand{\bH}{\boldsymbol{H}}
\newcommand{\bh}{\boldsymbol{h}}
\newcommand{\bk}{\boldsymbol{k}}
\newcommand{\bq}{\boldsymbol{q}}
\newcommand{\bw}{\boldsymbol{w}}
\newcommand{\bz}{\boldsymbol{z}}
\newcommand{\eps}{\varepsilon}
\DeclarePairedDelimiter{\norma}{\lVert}{\rVert}
\newcommand{\bJ}{\mathbf{J}}
\newcommand{\Stot}{\mathcal{S}}
\newcommand{\Sint}{\mathcal{S}^I}
\newcommand{\SB}{\mathcal{S}^{\Gamma}}
\renewcommand{\div}{\text{div}}
\DeclareMathOperator{\ddiv}{div}
\DeclareMathOperator{\ccurl}{curl}
\DeclareMathOperator{\bccurl}{\textbf{curl}}
\renewcommand{\curl}{\text{curl}}
\title{A virtual element method for the solution of 2D time-harmonic elastic wave equations via scalar potentials}
\author{\href{https://orcid.org/0000-0002-4957-3972}{\includegraphics[scale=0.06]{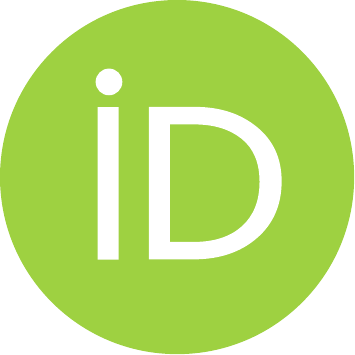}\hspace{1mm}Silvia ~Falletta} \\
	Dipartimento di Scienze Matematiche ``G.L. Lagrange''\\
	Politecnico di Torino\\
	Torino, 10129, Italy \\
	\texttt{silvia.falletta@polito.it} \\
\And
\href{https://orcid.org/0000-0002-2577-1421}{\includegraphics[scale=0.06]{orcid.pdf}\hspace{1mm}Matteo ~Ferrari} \\
	Dipartimento di Scienze Matematiche ``G.L. Lagrange''\\
	Politecnico di Torino\\
	Torino, 10129, Italy \\
	\texttt{matteo.ferrari@polito.it} \\
\And
\href{https://orcid.org/0000-0002-1685-9608}{\includegraphics[scale=0.06]{orcid.pdf}\hspace{1mm}Letizia ~Scuderi} \\
	Dipartimento di Scienze Matematiche ``G.L. Lagrange''\\
	Politecnico di Torino\\
	Torino, 10129, Italy \\
	\texttt{letizia.scuderi@polito.it} \\
}
\begin{document}
\maketitle

\begin{abstract}
In this paper, we propose and analyse a numerical method to solve 2D Dirichlet time-harmonic elastic wave equations. The procedure is based on the decoupling of the elastic vector field into scalar Pressure ($P$-) and Shear ($S$-) waves via a suitable Helmholtz-Hodge decomposition. For the approximation of the two scalar potentials we apply a virtual element method associated with different mesh sizes and degrees of accuracy. We provide for the stability of the method and a convergence error estimate in the $L^2$-norm for the displacement field, in which the contributions to the error associated with the $P$- and $S$- waves are separated. In contrast to standard approaches that are directly applied to the vector formulation, this procedure allows for keeping track of the two different wave numbers, that depend on the $P$- and $S$- speeds of propagation and, therefore, for using a high-order method for the approximation of the wave associated with the higher wave number. Some numerical tests, validating the theoretical results and showing the good performance of the proposed approach, are presented.
\end{abstract}

\keywords{time-harmonic elastic wave equation, virtual element method, Helmholtz-Hodge decomposition, scalar potentials.}

\section{Introduction}
The numerical modelling of elastic waves propagation problems has undergone, in recent years, an increasing interest in many of the mathematical and engineering areas such as, for example, geophysics, acoustics and seismology.

We consider the Dirichlet vector time harmonic elastic equation defined in a 2D bounded homogeneous medium. For its solution, several numerical methods have been proposed and analysed, among which we mention the traditional finite differences, the finite element method and the more recent Virtual Element Method (VEM).

The aim of this paper is to propose a novel approach based on an Helmholtz-Hodge decomposition of the elastic vector field into two scalar potentials. It consists in reformulating the vector equation into a couple of scalar equations, that describe the propagation of $P$- and $S$-waves, respectively. These two equations are coupled by the Dirichlet boundary condition. This approach, originally proposed in \cite{BurelImperialeJoly2012} to solve interior soft-scattering elastodynamic problems by a finite element method, allows for using approximation spaces with different mesh sizes and degrees of accuracy, related to the $P$- and $S$- speeds of propagation. This turns out to be a great advantage in some applications, such as elastic propagation in soft tissues, in which $P$-waves propagate much faster than $S$- ones, an aspect that displacement-based methods are not able to exploit.
It is worth mentioning that, more recently, the same approach has been applied in \cite{AlbellaImperialJolyRodriguez2018, AlbellaMartinezImperialeJolyRodriguez2021} to traction-free interior elastodynamics problems, in \cite{FallettaMonegatoScuderi2019} to solve exterior soft-scattering ones by means of their space-time BIE representations, and in \cite{FallettaMonegatoScuderi2022} by means of the coupling of boundary and finite element methods. To the best of our knowledge, there are no other papers on such a topic which, according to its interesting properties, is worth to be investigated further. 

The novelty of this paper consists in analyzing the potential approach for elastodynamic interior problems in the frequency domain, providing for stability of the variational formulation. Since in this case we cannot make use of the Fredholm theory, we perform the analysis by proving that the bilinear form associated with the variational formulation is T-coercive. This tool has been introduced in \cite{BonnetBenDhiaCiarletZwolf2010,Ciarlet2012} and it is particularly suitable to treat problems with sign-changing coefficients.

For the numerical solution, we apply a VEM and we provide for a convergence error estimate in the $L^2$-norm for the displacement field. By a careful study, we show that the approximation error of the discrete solution can be split into two contributions, associated with the $P$- and $S$- waves. This aspect allows us to use different mesh grids and approximation orders and to retrieve a high accuracy of the global scheme with a low order VEM for the approximation of the $P$- waves which propagate faster than $S$-ones and, hence, are associated with a smaller wave number. The choice of using VEM relies in the possibility of considering meshes whose elements can be of general shape, and to use local discrete spaces of arbitrarily high order by maintaining the simplicity of implementation independent of it. Another advantage of the proposed procedure that we have exploited, is the possibility of using the same discrete spaces and bilinear forms associated with the VEM for the solution of scalar elliptic problems and, hence, of using the corresponding codes.
We mention that, in literature, VEMs have been already applied to solve elasticity problems: in  \cite{BeiraoBrezziMarini2013} for  compressible and nearly incompressible materials in two dimensions, and in \cite{GainTalischiPaulino2014} for three dimensions; in \cite{BeiraoLovadinaMora2015} with mixed formulation; in \cite{ArtioliBeiraoLovadinaSacco2017, ArtioliBeiraoLovadinaSacco20172} for computational guidelines and in  
\cite{AntoniettiManziniMazzieriMouradVerani2021} for elastodynamics interior problems.
In the latter papers, details on the implementation of the vector version of VEM for elasticity are described, in particular for what concerns the virtual element projection of the strain tensor, which turns out to be the main challenge with respect to the scalar VEM. We remark that in our approach this issue is avoided since the proposed method involves the bilinear forms associated with the scalar Helmholtz equation, and hence only the guidelines to construct the classical VEM matrices (see \cite{BeiraoBrezziMariniRusso2014} and \cite{DesiderioFallettaScuderi2021}) are needed.

The paper is organized as follows: in Section \ref{sec2} we present the model problem for the time-harmonic elastic equation and its reformulation based on the Helmholtz-Hodge decomposition of the vector field. We introduce the variational formulation of the problem and we prove its stability by means of the T-coercivity property of the associated bilinear form. In Section \ref{sec:VEM}, for the approximation of the solution of the new problem, we apply a VEM and we prove the stability and the optimal convergence estimate in the $L^2$-norm for the displacement field. In Section \ref{sec3} we describe the algebraic formulation of the numerical scheme. Finally, in the last section, we present some numerical tests which confirm the theoretical results. Even if the theoretical analysis is provided for the polygonal version of the VEM, to highlight the feasibility of the proposed approach when dealing with curved geometries, in the last example we apply the curvilinear version of the VEM to a problem defined in a curved domain. This allows us to avoid the approximation of the geometry and to retrieve the optimal convergence rate with high approximation orders.

\begin{notation}
In what follows, we will use the bold convention to distinguish vector quantities from scalar ones. Given $\Omega \subset \R^2$ and $s \in \R$, we denote by $H^s(\Omega)$ the standard Sobolev space of order $s$, and by $\bH^s(\Omega) = [H^s(\Omega)]^2.$ 
Similarly, we denote by $(\cdot,\cdot)_{L^2(\Omega)}$ the scalar $L^2$-product in $\Omega$, and by $(\cdot,\cdot)_{\boldsymbol{L}^2(\Omega)}$ the vectorial $L^2$-scalar product therein. Similarly, the corresponding Sobolev norms $\| \cdot \|_{H^1(\Omega)}$ and $\| \cdot \|_{\bH^1(\Omega)}$ are defined as well.
 If $\Gamma$ is a Lipschitz curve, then we use the angled bracket
\begin{equation*}
	\langle \lambda, v \rangle_{\Gamma} = \int_{\Gamma} v(\bx) \lambda(\bx) \, \dd \Gamma_{\bx}
\end{equation*}
to denote the $L^2(\Gamma)$ inner product and its extension as the $H^{-\nicefrac{1}{2}}(\Gamma) \times H^{\nicefrac{1}{2}}(\Gamma)$ duality product.
Denoting by $\bu = \bu(\bx) = [u_1(\bx),u_2(\bx)]^T$ a vector field, and $v=v(\bx)$ a scalar function depending on the space variable $\bx = [x_1,x_2]^T$,  we use the following notations for the differential operators
\begin{equation*}
	\begin{array}{lll}
		& \nabla v =
		\begin{bmatrix}
			\partial_{x_1} v 
			\\ \partial_{x_2} v
		\end{bmatrix},
		\quad
		\bccurl v =
		\begin{bmatrix}
			\partial_{x_2} v
			\\ -\partial_{x_1} v
		\end{bmatrix},
		\quad
		\Delta v = \partial^2_{x_1} v + \partial^2_{x_2} v,
		\\ 
		\\ & \ddiv \bu  = \partial_{x_1} u_1 + \partial_{x_2} u_2,
		\quad \ccurl \bu =  \partial_{x_1} u_2 -\partial_{x_2} u_1,
	\end{array}
\end{equation*}
and
\begin{equation*}
	\Delta \bu  = \begin{bmatrix} \partial^2_{x_1} u_1 + \partial^2_{x_2} u_1, \partial^2_{x_1} u_2 + \partial^2_{x_2} u_2\end{bmatrix}^T.
\end{equation*}
\end{notation}
\section{The model problem} \label{sec2}
We consider the motion of a homogeneous, isotropic elastic solid occupying a bounded star shaped domain $\Omega \subset \R^2$ with Lipschitz boundary $\Gamma$. The displacement governing equation in the frequency domain, with the presence of a body force $\bbf$ and with a prescribed Dirichlet condition $\bg$ on $\Gamma$, can be written as 
\begin{align} \label{pb_iniziale}
	\begin{cases}
		-(\lambda + \mu) \nabla (\ddiv \bu(\bx)) - \mu \Delta \bu(\bx) -\rho \kappa^2 \bu (\bx) = \bbf(\bx) & \bx \in \Omega,
		\\ \bu(\bx) = \bg(\bx) & \bx \in \Gamma,
	\end{cases}
\end{align}
where $\lambda>0$ and $\mu>0$ are the Lam{\' e} constants, $\rho>0$ is the material density and $\kappa > 0$ is the frequency. We assume $\bbf \in \bL^2(\Omega)$ and $\bg \in \bH^{\nicefrac{1}{2}}(\Gamma)$.
To reformulate the vector problem in terms of a couple of scalar potential equations, following \cite{BurelImperialeJoly2012} and observing that
\begin{equation*}
	\Delta \bu = \nabla(\ddiv \bu ) - \bcurl(\ccurl \bu ),
\end{equation*}
we rewrite the partial differential equation in \eqref{pb_iniziale} as
\begin{equation} \label{lammu}
	-(\lambda + 2\mu) \nabla(\ddiv \bu(\bx)) + \mu \bcurl(\ccurl \bu(\bx)) - \rho \kappa^2 \bu(\bx) = \bbf(\bx).
\end{equation}
Hence, we split the displacement field $\bu$ by applying the following Helmholtz-Hodge decomposition
\begin{equation} \label{decgradphi}
	\bu = \nabla \phi^P + \bccurl \phi^S,
\end{equation}
in terms of some unknown scalar potentials $\phi^P, \phi^S \in H^1(\Omega)$. We point out that the decomposition \eqref{decgradphi} is not unique since, for any $\Phi \in H^1(\Omega)$ such that $\nabla \Phi + \bccurl \Phi=0$, the potentials $\widetilde{\phi}^P = \phi^P+\Phi, \widetilde{\phi}^S = \phi^S+\Phi$ satisfy \eqref{decgradphi} as well. 
The existence of a Helmholtz-Hodge decomposition is guaranteed by 
Theorem 3.2 in \cite{GiraultRaviart1986}, that we report here for completeness.
\begin{Theorem} \label{theoremgirault}
Let $\Omega$ be open connected, with Lipschitz boundary $\Gamma$, and denote by $\bn$ the unit normal vector on $\Gamma$ pointing outside $\Omega$. Then, each $\bv \in \bL^2(\Omega)$ can be decomposed in a unique way with a Helmholtz-Hodge decomposition of the type $\bv = \nabla v^P + \bccurl v^S$, with potentials $v^P, v^S \in H^1(\Omega)$ satisfying
\begin{equation} \label{pbdualf}
	\begin{cases}
		\Delta v^P(\bx) = \ddiv \bv(\bx) & \bx \in \Omega,
		\\ (\nabla v^P \cdot \bn)(\bx) = (\bv \cdot \bn)(\bx)  & \bx \in \Gamma,
		\\ \int_{\Gamma} v^P(\bx) \, \dd \Gamma_{\bx} = 0,
	\end{cases}
\end{equation}
and
\begin{equation} \label{pbduald}
	\begin{cases}
		\Delta v^S(\bx) = -\ccurl \bv(\bx) & \bx \in \Omega,
		\\ v^S (\bx) = 0 & \bx \in \Gamma.
	\end{cases}
\end{equation} 
\end{Theorem}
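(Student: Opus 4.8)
The plan is to construct $v^P$ and $v^S$ separately as solutions of variational formulations of \eqref{pbdualf} and \eqref{pbduald}, and then to verify that the residual $\bw := \bv - \nabla v^P - \bccurl v^S$ vanishes in $\bL^2(\Omega)$. First I would treat existence of $v^P$. Multiplying $\Delta v^P = \ddiv \bv$ by a test function $w \in H^1(\Omega)$, integrating by parts, and using the Neumann condition $\nabla v^P \cdot \bn = \bv \cdot \bn$ to cancel the boundary contributions, I arrive at the weak problem: find $v^P$ such that $\int_\Omega \nabla v^P \cdot \nabla w \,\dd\bx = \int_\Omega \bv \cdot \nabla w \,\dd\bx$ for all $w \in H^1(\Omega)$. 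Since both sides vanish on constants, this is naturally posed on the quotient $H^1(\Omega)/\R$, where the Poincar\'e--Wirtinger inequality makes the bilinear form coercive, so Lax--Milgram yields a unique solution, which I normalise by imposing $\int_\Gamma v^P \,\dd\Gamma = 0$. No separate compatibility condition is required, as it is encoded in testing against constants.

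For $v^S$ I would proceed analogously: testing $\Delta v^S = -\ccurl \bv$ against $z \in H^1_0(\Omega)$ and integrating the right-hand side by parts gives $\int_\Omega \nabla v^S \cdot \nabla z \,\dd\bx = \int_\Omega \bv \cdot \bccurl z \,\dd\bx$ for all $z \in H^1_0(\Omega)$, where I use that $\int_\Omega \ccurl\bv\, z = \int_\Omega \bv\cdot\bccurl z$ for $z$ with zero trace. Coercivity on $H^1_0(\Omega)$ follows from the Poincar\'e inequality, so Lax--Milgram again delivers a unique $v^S$. The elementary identity $\nabla v^S \cdot \nabla z = \bccurl v^S \cdot \bccurl z$ (a rotation by $90^\circ$ preserves the Euclidean product) lets me rewrite this equation as $\int_\Omega \bccurl v^S \cdot \bccurl z = \int_\Omega \bv \cdot \bccurl z$, which is the convenient form used below.

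The core of the argument is showing $\bw = 0$, and I would do this by proving that $\bw$ is $\bL^2$-orthogonal to both $\nabla H^1(\Omega)$ and $\bccurl H^1_0(\Omega)$. Orthogonality to gradients follows from the $v^P$-equation together with $\int_\Omega \bccurl v^S \cdot \nabla w = 0$; the latter is a Green/integration-by-parts identity $\int_\Omega \bccurl v^S \cdot \nabla w = \int_\Omega v^S\, \ccurl(\nabla w)\,\dd\bx + \int_\Gamma v^S\,(\nabla w \cdot \btau)\,\dd\Gamma$, whose two terms vanish because $\ccurl \nabla \equiv 0$ and $v^S \in H^1_0(\Omega)$ has zero trace. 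Orthogonality to $\bccurl H^1_0(\Omega)$ follows from the $v^S$-equation together with the analogous vanishing of $\int_\Omega \nabla v^P \cdot \bccurl z$ for $z \in H^1_0(\Omega)$. Specialising these orthogonality relations to test functions in $C_c^\infty(\Omega)$ then shows, in the distributional sense, that $\ddiv \bw = 0$ and $\ccurl \bw = 0$ in $\Omega$, while the full family of $H^1(\Omega)$ test functions gives $\bw \cdot \bn = 0$ on $\Gamma$ as an identity in $H^{-\nicefrac{1}{2}}(\Gamma)$.

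It remains to deduce $\bw = 0$ from $\ddiv \bw = 0$, $\ccurl \bw = 0$ and $\bw \cdot \bn = 0$, and this is where I expect the main difficulty to lie, since it is precisely the step that invokes the topology of $\Omega$: the passage from $\ccurl\bw=0$ to the existence of a scalar potential holds only because $\Omega$ is star shaped, hence simply connected. Granting this, I write $\bw = \nabla \psi$ with $\psi \in H^1(\Omega)$; then $\ddiv \bw = \Delta \psi = 0$ and $\nabla\psi\cdot\bn = \bw\cdot\bn = 0$ identify $\psi$ as a solution of the homogeneous Neumann problem, so $\psi$ is constant and $\bw = 0$, giving the decomposition. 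For uniqueness, if $(p,s)$ denotes the difference of two admissible decompositions, then $\nabla p + \bccurl s = 0$; applying $\ddiv$ and $\ccurl$ yields $\Delta s = 0$ with $s|_\Gamma = 0$ and $\Delta p = 0$ with $\nabla p \cdot \bn|_\Gamma = 0$, whence $s = 0$ and $p$ is constant, the normalisation $\int_\Gamma p = 0$ then forcing $p = 0$. Throughout, the recurring technical point to handle with care is that, for $\bv \in \bL^2(\Omega)$ only, the quantities $\ddiv \bv$, $\ccurl \bv$ and the normal trace must be read in the weak/duality sense, so every integration by parts above should be justified by density of smooth fields.
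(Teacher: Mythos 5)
Your proof is correct, but there is nothing in the paper to compare it against line by line: the paper does not prove Theorem \ref{theoremgirault} at all. The statement is quoted ``for completeness'' from \cite{GiraultRaviart1986} (Theorem 3.2 there), and the citation is the entire proof. What you have written is essentially a self-contained reconstruction of the textbook argument: build $v^P$ by Lax--Milgram on the quotient $H^1(\Omega)/\R$ (with the compatibility condition absorbed by testing against constants, as you note) and $v^S$ on $H^1_0(\Omega)$; show the residual $\bw=\bv-\nabla v^P-\bccurl v^S$ is $\bL^2$-orthogonal to $\nabla H^1(\Omega)$ and to $\bccurl H^1_0(\Omega)$; deduce $\ddiv\bw=0$, $\ccurl\bw=0$ in the distributional sense and $\bw\cdot\bn=0$ in $H^{-\nicefrac{1}{2}}(\Gamma)$; and then kill $\bw$ by the topological step. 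Each ingredient checks out: the quotient-space coercivity, the identity $\nabla v^S\cdot\nabla z=\bccurl v^S\cdot\bccurl z$, both orthogonality relations, and the recovery of the boundary conditions via the normal trace of a divergence-free $\bL^2$ field. Your displayed boundary term $\int_\Gamma v^S\,(\nabla w\cdot\btau)\,\dd\Gamma$ is only formal for $w\in H^1(\Omega)$, but the density-of-smooth-fields justification you explicitly flag at the end repairs it, so this is a presentational wrinkle rather than a gap.

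Your remark about where topology enters deserves emphasis, because it exposes an imprecision in the paper's transcription of the theorem. As stated here, the hypothesis is only that $\Omega$ is open and connected, and under that hypothesis alone the claim is false: in the annulus $\Omega=\{1<|\bx|<2\}$ the field $\bw(\bx)=(-x_2,x_1)^T/|\bx|^2$ satisfies $\ddiv\bw=0$, $\ccurl\bw=0$ and $\bw\cdot\bn=0$ on both boundary circles, yet it cannot equal $\nabla v^P+\bccurl v^S$ with potentials solving \eqref{pbdualf}--\eqref{pbduald}, since those problems then have zero data and force the decomposition to vanish. Simple connectedness --- supplied in this paper by the standing star-shapedness assumption, which you correctly invoke --- is exactly what makes such harmonic fields trivial, and it also underlies the step from $\ccurl\bw=0$ to $\bw=\nabla\psi$ with $\psi\in H^1(\Omega)$. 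So your proof is complete precisely in the setting the paper actually works in, and your identification of the hypothesis that is genuinely needed is more careful than the statement you were asked to prove.
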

In our case, the characterizations \eqref{pbdualf}-\eqref{pbduald} can not be applied to determine $\phi^P$ and $\phi^S$, the functions $\ddiv \bu $, $\bu \cdot \bn$ and $\ccurl \bu$ being unknown. Therefore, to construct a suitable decomposition, we start by recalling the following properties, for a sufficiently smooth potential $\Phi$:
\begin{equation} \label{propfond}
	\begin{aligned}
			& \div(\nabla \Phi) = - \curl(\bccurl \Phi) = \Delta \Phi, \quad \div(\bccurl \Phi ) =  \curl(\nabla \Phi)  =  0, \quad \quad \text{in $\Omega$},
			\\ & \nabla \Phi \cdot \bn = \bccurl \Phi \cdot \btau, \quad \nabla \Phi \cdot \btau = - \bccurl \Phi \cdot \bn,  \hspace{3.43cm} \text{on $\Gamma$,}
	\end{aligned}
\end{equation}
$\btau$ representing the clockwise oriented tangential directions. By properly applying \eqref{propfond}, equation \eqref{lammu} can be reformulated, in terms of the potentials $\phi^P$ and $\phi^S$, as
\begin{equation} \label{original}
	-(\lambda + 2\mu) \nabla(\Delta \phi^P) - \mu \bcurl(\Delta \phi^S) -\rho \kappa^2 \left(\nabla \phi^P + \bccurl \phi^S\right) = \bbf,
\end{equation}
and the associated Dirichlet boundary conditions can be equivalently rewritten as
\begin{equation*}
	(\nabla \phi^P + \bccurl \phi^S)\cdot \bn = \bg \cdot \bn, \quad (\nabla \phi^P + \bccurl \phi^S)\cdot \btau = \bg \cdot \btau,
\end{equation*}
i.e.,
\begin{equation*}
	\partial_{\bn} \phi^P - \partial_{\btau} \phi^S = \mathbf{g} \cdot \mathbf{n} =: {g}_{\mathbf{n}}, \quad \partial_{\bn} \phi^S + \partial_{\btau} \phi^P = \mathbf{g} \cdot \btau =: {g}_{\boldsymbol{\tau}}.
\end{equation*}
To reformulate \eqref{original} in terms of a couple of scalar PDEs, according to Theorem \ref{theoremgirault} we decompose $\mathbf{f} = \nabla f^P + \bccurl f^S$ with $f^P, f^S \in H^1(\Omega)$, and we easily obtain the following equivalent problem for the potentials (see Proposition 3.5.1 of \cite{Burel2014} for details):
\begin{equation} \label{pb_strong}
	\begin{cases}
		\displaystyle{- \Delta \phi^P(\bx)  - \frac{\rho \kappa^2}{\lambda+2\mu} \phi^P(\bx) = \frac{1}{\lambda+2\mu} f^P(\bx)}, & \bx \in \Omega,
		\\ \displaystyle{- \Delta \phi^S(\bx) - \frac{\rho \kappa^2}{\mu} \phi^S(\bx) = \frac{1}{\mu} f^S(\bx)}, & \bx \in \Omega,
		\\ \partial_{\bn} \phi^P (\bx) - \partial_{\btau} \phi^S(\bx) = {g}_{\mathbf{n}}(\bx), & \bx \in \Gamma,
		\\ \partial_{\bn} \phi^S (\bx) + \partial_{\btau} \phi^P (\bx) = {g}_{\boldsymbol{\tau}}(\bx), & \bx \in \Gamma.
	\end{cases}
\end{equation}
The first two equations of Problem \eqref{pb_strong} are Helmholtz equations associated with the longitudinal and transverse wave numbers 
\begin{equation}\label{eq:wave_numbers}
	\kappa_P^2 = \frac{\rho \kappa^2}{\lambda+2\mu}, \qquad \text{and} \qquad \kappa_S^2 = \frac{\rho \kappa^2}{\mu},
\end{equation}
respectively. Formulation \eqref{pb_strong} is of particular interest in many applications of physics, for example, when the problem source is a $P$-wave or a $S$-wave, and the knowledge of the propagation of the $P$- and $S$-waves generated by this source is required.

To obtain the variational formulation of Problem \eqref{pb_strong} we start by multiplying the first and second equations by the corresponding components of the vector function $\bv = (v^P,v^S) \in \bH^1(\Omega)$, we apply the Green formula and we use the second two equations of \eqref{pb_strong} to rewrite the normal unknown derivatives on $\Gamma$ in terms of the tangential ones. In particular, the weak form reads: find $(\phi^P,\phi^S) \in \bH^1(\Omega)$ such that
\begin{equation} \label{system}
	\begin{cases}
		\left(\nabla \phi^P , \nabla v^P\right)_{\bL^2(\Omega)}  - \left\langle \partial_{\btau} \phi^S, v^P \right\rangle_{\Gamma} & - \kappa_P^2 \left(\phi^P,v^P\right)_{L^2(\Omega)} = \frac{1}{\lambda+2\mu}\left (f^P,v^P\right)_{L^2(\Omega)} +\left \langle g_{\mathbf{n}} , v^P \right\rangle_{\Gamma},
		\\ (\nabla \phi^S ,\nabla v^S)_{\bL^2(\Omega)} + \langle \partial_{\btau} \phi^P, v^S \rangle_{\Gamma} & - \kappa_S^2 (\phi^S,v^S)_{L^2(\Omega)}  = \frac{1}{\mu} (f^S,v^S)_{L^2(\Omega)} + \left\langle {g}_{\btau} , v^S \right\rangle_{\Gamma},
	\end{cases}
\end{equation}
for all $(v^P,v^S) \in \bH^1(\Omega)$.

By introducing the bilinear forms $a:H^1(\Omega) \times H^1(\Omega) \to \R$ and $m : L^2(\Omega) \times L^2(\Omega) \to \R$
\begin{equation}\label{eq:bif}
	a(u,v) = \left(\nabla u , \nabla v\right)_{L^2(\Omega)}, \qquad m(u,v) = \left(u ,v\right)_{L^2(\Omega)},
 \end{equation}
we define $\mathcal{B}, \mathcal{K} : \bH^1(\Omega) \times \bH^1(\Omega) \to \R$
\begin{eqnarray}
	\mathcal{B}(\bphi,\bv) & = & a(\phi^P , v^P) + a(\phi^S , v^S) - \left\langle \partial_{\btau} \phi^S, v^P \right\rangle_{\Gamma} + \langle \partial_{\btau} \phi^P, v^S \rangle_{\Gamma},\label{eq:Bop} 
	\\ \mathcal{K}(\bphi,\bv) & = &  \kappa_P^2 m( \phi^P , v^P) + \kappa_S^2 m( \phi^S , v^S), \label{eq:Kop} 
\end{eqnarray}
and the linear form $\mathcal{L}_{\bbf,\bg} :  \bH^1(\Omega) \to \R$
\begin{equation}\label{eq:Lop}
	\mathcal{L}_{\bbf,\bg}(\bv) = \frac{1}{\lambda+2\mu} \left(f^P,v^P\right)_{L^2(\Omega)} +\left \langle g_{\mathbf{n}} , v^P \right\rangle_{\Gamma} + \frac{1}{\mu} (f^S,v^S)_{L^2(\Omega)} + \left\langle {g}_{\btau} , v^S \right\rangle_{\Gamma}.
\end{equation}
Hence, we rewrite \eqref{system} in the following operator notation: find $\bphi \in \bH^1(\Omega)$ such that
\begin{equation*}
	\mathcal{B}(\bphi,\bv) - \mathcal{K}(\bphi,\bv)  = \mathcal{L}_{\mathbf{f},\mathbf{g}}(\bv), \quad \text{for all~} \bv \in \bH^1(\Omega).
\end{equation*}
By following \cite{Burel2014}, we introduce the Hilbert space 
\begin{equation*}
	\bV = \{\bv = (v^P,v^S) \in \bL^2(\Omega) : \nabla v^P + \bccurl v^S \in \bL^2(\Omega) \},
\end{equation*}
endowed with the norm and semi-norm
\begin{equation*}
	\| \bv \|^2_{\bV} = \| \bv \|^2_{\bL^2(\Omega)} + \left\| \nabla v^P + \bccurl v^S \right\|^2_{\bL^2(\Omega)}, \quad | \bv |^2_{\bV} =  \left\| \nabla v^P + \bccurl v^S \right\|^2_{\bL^2(\Omega)}.
\end{equation*}
Observing that, for $\bv = \left(v^P, v^S\right) \in \bV$, it holds
\begin{equation} \label{proput}
	\nabla v^P + \bccurl v^S =
	\begin{bmatrix}
		\ddiv \bv
		\\ -\ccurl \bv
	\end{bmatrix},
\end{equation}
it is immediate to deduce that $\bV = \bH(\text{div},\Omega) \cap \bH(\text{curl},\Omega)$, where 
\begin{align*}
	\bH(\text{div},\Omega) = \{ \bv \in \bL^2(\Omega) : \ddiv \bv \in L^2(\Omega)\},
	\quad \bH(\text{curl} ,\Omega) =  \{ \bv\in \bL^2(\Omega) : \ccurl \bv \in L^2(\Omega)\}
\end{align*}
and that
\begin{equation*}
	\| \bv \|^2_{\bV} = \| \bv \|^2_{\bL^2(\Omega)} + \| \ddiv \bv \|^2_{L^2(\Omega)} + \| \ccurl \bv \|^2_{L^2(\Omega)}.
\end{equation*}
Following the proof of \cite[Lemma 1]{BurelImperialeJoly2012}, it is easy to show that for all $\bphi = \left(\phi^P, \phi^S\right), \bv = \left(v^P,v^S\right) \in\bH^1(\Omega)$, it holds
\begin{equation} \label{fond_B}
	\mathcal{B}(\bphi,\bv) =  \left(\nabla \phi^P + \bccurl \phi^S,  \nabla v^P + \bccurl v^S\right)_{\bL^2(\Omega)},
\end{equation}
which entails that the bilinear form $\mathcal{B}$ is well-defined also in the less regular space $\bV$. Moreover, combining \eqref{proput} with \eqref{fond_B}, for all $\bphi, \bv \in \bV$ it is possible to rewrite 
\begin{equation*} \label{propB}
	\mathcal{B}(\bphi,\bv) = (\ddiv \bphi,\ddiv \bv)_{L^2(\Omega)} +  (\ccurl \bphi,\ccurl \bv)_{L^2(\Omega)}.
\end{equation*}
Therefore, since for $\bphi \in \bV$ it holds $\mathcal{B}(\bphi,\bphi) = | \bphi |^2_{\bV}$, it appears natural to define the variational formulation of Problem \eqref{pb_strong} as follows: find $\boldsymbol{\varphi} \in \bV$ such that
\begin{equation} \label{weak_Pb}
	\mathcal{A}(\bphi,\bv) = \mathcal{B}(\bphi,\bv) - \mathcal{K}(\bphi,\bv)  = \mathcal{L}_{\mathbf{f},\mathbf{g}}(\bv) \quad \forall \ \bv \in \bV.
\end{equation}
It is worth to point out, since the canonical injection of $\bV$ into $\bL^2(\Omega)$ is not compact (see \cite[Proposition 2.7]{AmroucheBernardiDaugeGirault1998}), we can not assert that the operator $\mathcal{K}$ is compact and, hence, we can not directly apply the well known Fredholm theory. To overcome such issue, in the next section we will introduce an isomorphism $\bT : \bV \to \bV$, that will allow us to prove the stability of the weak formulation \eqref{weak_Pb}. The strategy consists in showing that the new bilinear form $\mathcal{A}(\bphi,\bT \bv)$ can be written as the sum of a coercive bilinear form and of a bilinear form associated with a compact operator. This procedure is the main idea of the so called \textit{T-coercivity}, which has been coined in \cite{BonnetBenDhiaCiarletZwolf2010}, and turned out to be particularly useful for problems such as the Maxwell's equations in the time-harmonic regime.
\subsection{Stability of the weak formulation}
In what follows, the notation $Q_1 \apprle Q_2$ (resp. $Q_1 \apprge Q_2$) means that $Q_1$ is bounded from above (resp. from below) by $c\,Q_2$, where $c$ is a positive constant that may depend on $\kappa_P$ and $\kappa_S$ but, unless explicitly stated, does not depend on any other relevant parameter involved in the definition of $Q_1$ and $Q_2$.

We start by providing for the basic definition of the T-coercivity, as given in  \cite{Ciarlet2012}.
\begin{Definition}
Let $\mathcal{H}$ be a Hilbert space. A continuous bilinear form $b : \mathcal{H} \times \mathcal{H} \to \R$ is \textit{T-coercive} if there exists a linear isomorphism $T : \mathcal{H} \to \mathcal{H}$ such that
\begin{equation*}
	|b(u, Tu)| \ge \| u \|_{\mathcal{H}}^2, \qquad  \forall \ u \in \mathcal{H}.
\end{equation*}
\end{Definition}
In \cite[Theorem 1]{Ciarlet2012}, it has been shown that the T-coercivity is a necessary and sufficient condition to guarantee the well-posedness for a variational problem associated with the bilinear form $b$. 

We remark that the T-coercivity can also be extended to allow for compact perturbations of coercive operators (see \cite{Buffa2005}). This will be the case analysed in this paper. In particular, we will show that there exists an invertible operator $\bT : \bV \to \bV$ such that we can apply the Fredholm theory to the following transformed problem, equivalent to \eqref{weak_Pb}: find $\bphi \in \bV$ such that
\begin{equation}  \label{TAeq}
	\mathcal{A} (\bphi, \bT\bv) = \mathcal{L}_{\bbf,\bg}(\bT\bv) \quad \forall \ \bv \in \mathbf{V}.
\end{equation}
To this aim, proceeding as in \cite{SayasBrownHassel2019}, we introduce a projection $\bP : \bV \to \bV$ such that $\bP : \bV \to \bL^2(\Omega)$ is compact. This is achieved by defining $\bP \bv = \nabla \tilde v^P + \bccurl \tilde v^S$ in terms of the Helmholtz-Hodge decomposition of $\bv \in \bV$, where $\tilde v^P, \tilde v^S \in H^1(\Omega)$ are solutions of the following Dirichlet Poisson problems
\begin{equation*}
	\begin{cases}
		\Delta \tilde v^P(\bx) = \ddiv \bv(\bx), & \bx \in \Omega,
		\\ \tilde v^P(\bx) = 0, & \bx \in \Gamma,
	\end{cases}
	\qquad
	\begin{cases}
		\Delta \tilde v^S(\bx) = -\ccurl \bv(\bx), & \bx \in \Omega,
		\\ \tilde v^S (\bx) = 0, & \bx \in \Gamma.
	\end{cases}
\end{equation*}
By virtue of \eqref{propfond}, the following properties hold
\begin{equation}\label{divcurlP}
	\div(\bP \bv) = \ddiv \bv, \quad \curl(\bP \bv) = \ccurl \bv, \quad \bP^2 = \bP.
\end{equation}
Moreover, due to the standard theory related to Poisson problems, from \eqref{divcurlP} we can easily deduce that
\begin{equation*}
	\| \bP \bv \|_{\bV} \apprle \| \bP \bv \|_{\bL^2(\Omega)} + \| \curl(\bP \bv) \|_{\bL^2(\Omega)} + \| \div(\bP \bv) \|_{\bL^2(\Omega)} \apprle \| \bv \|_{\bV}.
\end{equation*}
Further, from standard regularity results for star-shaped domains (see e.g. \cite{Grisvard1985}), it results that $\tilde v^P, \tilde v^S \in H^{1+\varepsilon}(\Omega)$ with $\varepsilon > 0$. This latter entails $ \bP \bv \in \bH^{\varepsilon}(\Omega)$ and
\begin{equation*}
	\| \bP \bv \|_{\bH^{\varepsilon}(\Omega)} \apprle \| \ddiv \bv \|_{L^2(\Omega)} + \| \ccurl \bv \|_{L^2(\Omega)} = | \bv |_{\bV}.
\end{equation*}
According to \eqref{divcurlP}, the operator $\bT = 2\bP-\bI$ is an isomorphism in $\bV$; indeed, it satisfies $\bT^2=\bI$ and 
\begin{equation}\label{divcurlT}
	\div(\bT\bv) = \ddiv \bv, \qquad \ccurl (\bT\bv) = \ccurl \bv.
\end{equation}
\begin{Proposition}\label{Prop:main_result}
The projection  $\bP : \bV \to \bL^2(\Omega)$ is compact and Problem \eqref{pb_strong} is well-posed, assuming $- \kappa_P^2$ and $-\kappa_S^2$ are not eigenvalues of the associated homogeneous Laplace problem.
\end{Proposition}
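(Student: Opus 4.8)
The plan is to treat the two assertions separately, proving the compactness of $\bP$ first since it drives the stability argument. For this I would only combine the two facts already recorded before the statement: $\bP$ maps $\bV$ boundedly into $\bH^{\varepsilon}(\Omega)$, with $\| \bP \bv \|_{\bH^{\varepsilon}(\Omega)} \apprle | \bv |_{\bV}$ for some $\varepsilon>0$ (from elliptic regularity on the star-shaped $\Omega$), while the Rellich--Kondrachov theorem furnishes a compact embedding $\bH^{\varepsilon}(\Omega) \hookrightarrow \bL^2(\Omega)$ on the bounded domain. Factoring $\bP : \bV \to \bL^2(\Omega)$ through $\bH^{\varepsilon}(\Omega)$ as (bounded)$\circ$(compact) then gives compactness at once.

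For well-posedness I would work with the isomorphism $\bT = 2\bP - \bI$ and the equivalent transformed problem \eqref{TAeq}. The key computation is the expansion of $\mathcal{A}(\bphi,\bT\bv)$. Since $\bT$ leaves divergence and curl invariant by \eqref{divcurlT}, and $\mathcal{B}$ depends only on these, one has $\mathcal{B}(\bphi,\bT\bv) = \mathcal{B}(\bphi,\bv)$; substituting $\bT\bv = 2\bP\bv - \bv$ in the zeroth-order term yields
\begin{equation*}
	\mathcal{A}(\bphi,\bT\bv)
	= \underbrace{\mathcal{B}(\bphi,\bv) + \mathcal{K}(\bphi,\bv)}_{=:\,c(\bphi,\bv)}
	\;\underbrace{-\,2\,\mathcal{K}(\bphi,\bP\bv)}_{=:\,k(\bphi,\bv)}.
\end{equation*}
I would then verify that $c$ is continuous and coercive while $k$ induces a compact operator. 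Coercivity is immediate from $\mathcal{B}(\bphi,\bphi) = |\bphi|^2_{\bV}$ and $\mathcal{K}(\bphi,\bphi) = \kappa_P^2 \| \phi^P \|^2_{L^2(\Omega)} + \kappa_S^2 \| \phi^S \|^2_{L^2(\Omega)}$, which together bound $c(\bphi,\bphi)$ below by $\min(1,\kappa_P^2,\kappa_S^2)\,\| \bphi \|^2_{\bV}$. For $k$ I would invoke the weak-times-strong criterion: given $\bphi_n \rightharpoonup \bphi$ and $\bv_n \rightharpoonup \bv$ in $\bV$, compactness of $\bP$ gives $\bP\bv_n \to \bP\bv$ strongly in $\bL^2(\Omega)$, while $\bphi_n \rightharpoonup \bphi$ in $\bL^2(\Omega)$; since $\mathcal{K}$ is $\bL^2(\Omega)$-continuous, $\mathcal{K}(\bphi_n,\bP\bv_n) \to \mathcal{K}(\bphi,\bP\bv)$, which is exactly the compactness of the operator attached to $k$.

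With this splitting, the operator induced by $\mathcal{A}(\cdot,\bT\cdot)$ is a compact perturbation of the boundedly invertible (Lax--Milgram) operator induced by $c$, hence Fredholm of index zero; as $\bT$ is an isomorphism, the same holds for \eqref{weak_Pb}. Well-posedness is then equivalent to injectivity. A homogeneous solution $\bphi \in \bV$ satisfies, in strong form, the two Helmholtz equations of \eqref{pb_strong} with vanishing data and the homogeneous coupled boundary conditions, i.e. it solves the homogeneous counterpart of \eqref{pb_strong}; since by hypothesis $-\kappa_P^2$ and $-\kappa_S^2$ are not eigenvalues of the associated homogeneous Laplace problem, this forces $\bphi = 0$, and the Fredholm alternative then yields existence and uniqueness for every datum.

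The main obstacle, and the reason plain Fredholm theory does not apply directly, is the indefinite zeroth-order term: because $\bV$ does not embed compactly into $\bL^2(\Omega)$, $\mathcal{K}$ cannot be treated as a compact perturbation of $\mathcal{B}$. The decisive mechanism is the reorganization induced by $\bT$: the $-\bI$ part of $\bT = 2\bP - \bI$ flips the sign of $\mathcal{K}(\bphi,\bphi)$ so that it merges with $\mathcal{B}$ into a genuinely coercive form, while the remainder $\mathcal{K}(\cdot,\bP\cdot)$ is compact for the separate reason that $\bP$ --- not the embedding $\bV \hookrightarrow \bL^2(\Omega)$ --- is compact. Getting this sign bookkeeping right, and securing the regularity gain $\bP \bv \in \bH^{\varepsilon}(\Omega)$ that underlies the compactness of $\bP$, is where the real care lies.
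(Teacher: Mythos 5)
Your proposal is correct, and although it shares the paper's skeleton --- the same isomorphism $\bT = 2\bP-\bI$, the compactness of $\bP$ obtained by factoring it through $\bH^{\varepsilon}(\Omega)$ and invoking Rellich--Kondrachov, and the Fredholm alternative plus the eigenvalue hypothesis at the end --- the central coercive-plus-compact splitting you use is genuinely different from, and substantially cleaner than, the paper's. The paper expands $\bphi = \bP\bphi + (\bI-\bP)\bphi$ and $\bT\bv = \bP\bv - (\bI-\bP)\bv$ inside $\mathcal{K}$, collects roughly a dozen terms into a coercive form $\widetilde{\mathcal{B}}$ (whose coercivity requires the cancellation of cross terms and the inequality $\|x-y\|^2 + \|x\|^2 \ge \tfrac{1}{2}\|y\|^2$) and a compact form $\widetilde{\mathcal{K}}(\bphi,\bv) = 2(\kappa_P^2+\kappa_S^2)(\bP\bphi,\bP\bv)_{\bL^2(\Omega)}$. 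You instead use only bilinearity and \eqref{divcurlT}: since $\mathcal{B}(\bphi,\bT\bv) = \mathcal{B}(\bphi,\bv)$ and $-\mathcal{K}(\bphi,\bT\bv) = \mathcal{K}(\bphi,\bv) - 2\mathcal{K}(\bphi,\bP\bv)$, your coercive part is simply $\mathcal{B}+\mathcal{K}$ --- free of projections, so coercivity is immediate with constant $\min\{1,\kappa_P^2,\kappa_S^2\}$ --- and your compact part is $2\mathcal{K}(\cdot,\bP\,\cdot)$. A pleasant by-product is that your coercive part coincides with the auxiliary operator $\bar{\mathcal{A}} = \mathcal{B}+\mathcal{K}$ that the paper introduces separately in the VEM section for the discrete analysis, so your route unifies the continuous and discrete arguments. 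What the paper's heavier bookkeeping buys is that $\bP$ appears in both slots of its compact part, so compactness of the induced operator follows verbatim from the cited result \cite[Proposition 15.3]{SayasBrownHassel2019}; in your splitting $\bP$ acts only on the second argument, and you appeal to the (correct, but left unproven in your sketch) principle that joint weak--weak sequential continuity of a bilinear form implies compactness of the induced operator on a Hilbert space. If you prefer to avoid that criterion, observe that the operator attached to your compact part factors as $-2\,\bP^{*}\circ M \circ \iota$, where $\iota : \bV \to \bL^2(\Omega)$ is the continuous inclusion, $M : \bL^2(\Omega) \to (\bL^2(\Omega))'$ is the bounded operator attached to $\mathcal{K}$, and $\bP^{*}$ is the adjoint of the compact map $\bP : \bV \to \bL^2(\Omega)$, hence compact by Schauder's theorem; this makes the step a one-liner. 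The remaining pieces --- transferring the Fredholm property of index zero through the isomorphism $\bT$ and reducing well-posedness to injectivity, which the eigenvalue assumption on $-\kappa_P^2$, $-\kappa_S^2$ then guarantees --- match the paper's argument at the same level of detail.
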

\begin{proof}
From the compactness of the standard Sobolev embedding $\bH^{\varepsilon}(\Omega) \hookrightarrow \bL^2(\Omega)$ (see e.g. \cite[Theorem 7.1]{DiNezzaPalatucciValdinoci2012}), and the continuity of $\bP : \bV \to \bH^{\varepsilon}(\Omega)$, it easily follows that $\bP : \bV \to \bL^2(\Omega)$ is compact.

By using the relations \eqref{divcurlT} we write
\begin{align*}
	\mathcal{A} (\bphi, \bT\bv) & = \mathcal{B}(\bphi,\bT\bv)  - \mathcal{K}(\bphi,\bT \bv) = \mathcal{B}(\bphi, \bv) - \mathcal{K}(\bphi, (2\bP-\bI)\bv) 
	\\ & = \mathcal{B}(\bphi, \bv) - \kappa_P^2((\bP\bphi + (\bI-\bP)\bphi)^P, (\bP \bv - (\bI-\bP) \bv)^P)_{L^2(\Omega)}
	\\ & \hspace{1.2cm} - \kappa_S^2((\bP\bphi + (\bI-\bP)\bphi)^S, (\bP \bv - (\bI-\bP) \bv)^S)_{L^2(\Omega)}
	\\ & = \mathcal{B}(\bphi, \bv) + \kappa_P^2((\bP\bphi)^P,(\bP\bv)^P)_{L^2(\Omega)}  + \kappa_P^2(((\bI-\bP)\bphi)^P,((\bI-\bP)\bv)^P)_{L^2(\Omega)}
	\\ & \hspace{1.2cm} + \kappa_S^2((\bP\bphi)^S,(\bP\bv)^S)_{L^2(\Omega)}
 + \kappa_S^2(((\bI-\bP)\bphi)^S,((\bI-\bP)\bv)^S)_{L^2(\Omega)}
	\\ & \hspace{1.2cm} -2\kappa_P^2((\bP\bphi)^P,(\bP\bv)^P)_{L^2(\Omega)} - \kappa_P^2(((\bI-\bP)\bphi)^P,(\bP\bv)^P)_{L^2(\Omega)}
	\\ & \hspace{1.2cm} + \kappa_P^2((\bP\bphi)^P,((\bI-\bP)\bv)^P)_{L^2(\Omega)} -2\kappa_S^2((\bP\bphi)^S,(\bP\bv)^S)_{L^2(\Omega)}
	\\ & \hspace{1.2cm} - \kappa_S^2(((\bI-\bP)\bphi)^S,(\bP\bv)^S)_{L^2(\Omega)}
 + \kappa_S^2((\bP\bphi)^S,((\bI-\bP)\bv)^S)_{L^2(\Omega)}
	\\ &  = \widetilde{\mathcal{B}}(\bphi, \bv) - \widetilde{\mathcal{K}}(\bphi,\bv),
\end{align*}
where we have defined the auxiliary bilinear forms $\widetilde{\mathcal{B}}, \widetilde{\mathcal{K}} : \bV \times \bV \to \R$,
\begin{align*}
	\widetilde{\mathcal{B}}(\bphi, \bv) & =  \mathcal{B}(\bphi, \bv) + \kappa_P^2((\bP\bphi)^P,(\bP\bv)^P)_{L^2(\Omega)} + \kappa_P^2(((\bI-\bP)\bphi)^P,((\bI-\bP)\bv)^P)_{L^2(\Omega)}
	\\ & \hspace{1.2cm} +2\kappa_P^2((\bP\bphi)^S,(\bP\bv)^S)_{L^2(\Omega)} + \kappa_S^2((\bP\bphi)^S,(\bP\bv)^S)_{L^2(\Omega)} 
	\\ & \hspace{1.2cm} + \kappa_S^2(((\bI-\bP)\bphi)^S,((\bI-\bP)\bv)^S)_{L^2(\Omega)} +2\kappa_S^2((\bP\bphi)^P,(\bP\bv)^P)_{L^2(\Omega)}
	\\ & \hspace{1.2cm} - \kappa_P^2(((\bI-\bP)\bphi)^P,(\bP\bv)^P)_{L^2(\Omega)} + \kappa_P^2((\bP\bphi)^P,((\bI-\bP)\bv)^P)_{L^2(\Omega)}
	\\ & \hspace{1.2cm}  - \kappa_S^2(((\bI-\bP)\bphi)^S,(\bP\bv)^S)_{L^2(\Omega)} + \kappa_S^2((\bP\bphi)^S,((\bI-\bP)\bv)^S)_{L^2(\Omega)}
\end{align*}
and
\begin{equation*}
	\widetilde{\mathcal{K}}(\bphi, \bv)  = 2(\kappa_P^2+\kappa_S^2)(\bP\bphi,\bP\bv)_{\bL^2(\Omega)}.
\end{equation*}
To show that $\widetilde{\mathcal{B}}$ is coercive in $\bV$, we write
\begin{align*}
	\widetilde{\mathcal{B}}(\bphi, \bphi) & = \mathcal{B}(\bphi, \bphi) + \kappa_P^2\| (\bP\bphi)^P\|^2_{L^2(\Omega)} + \kappa_P^2\|((\bI-\bP)\bphi)^P\|^2_{L^2(\Omega)} +2\kappa_P^2\|(\bP\bphi)^S\|^2_{L^2(\Omega)}
	\\ & \hspace{1.95cm} + \kappa_S^2\|(\bP\bphi)^S\|^2_{L^2(\Omega)} + \kappa_S^2\|((\bI-\bP)\bphi)^S\|^2_{L^2(\Omega)}  +2\kappa_S^2\|(\bP\bphi)^P\|^2_{L^2(\Omega)}
	\\ & \hspace{1.95cm}  - \kappa_P^2(((\bI-\bP)\bphi)^P,(\bP\bphi)^P)_{L^2(\Omega)} + \kappa_P^2((\bP\bphi)^P,((\bI-\bP)\bphi)^P)_{L^2(\Omega)} 
	\\ & \hspace{1.95cm} - \kappa_S^2(((\bI-\bP)\bphi)^S,(\bP\bphi)^S)_{L^2(\Omega)} + \kappa_S^2((\bP\bphi)^S,((\bI-\bP)\bphi)^S)_{L^2(\Omega)}
	\\ & \ge \mathcal{B}(\bphi, \bphi) + \kappa_P^2\| (\bP\bphi)^P\|^2_{L^2(\Omega)} + \kappa_P^2\|((\bI-\bP)\bphi)^P\|^2_{L^2(\Omega)}   + \kappa_S^2\|(\bP\bphi)^S\|^2_{L^2(\Omega)}
 + \kappa_S^2\|((\bI-\bP)\bphi)^S\|^2_{L^2(\Omega)}. 
\end{align*}
By using the relation  $\| x - y \|^2 + \| x \|^2 \ge \frac{1}{2} \| y \|^2$, the coercivity of $\widetilde{\mathcal{B}}$ follows from the following inequality
\begin{align*}
	\widetilde{\mathcal{B}}(\bphi, \bphi) & \ge \| \ddiv \bphi\|^2_{L^2(\Omega)} + \| \ccurl \bphi\|^2_{L^2(\Omega)} + \frac{\kappa_P^2}{2}\| \phi^P \|^2_{\bL^2(\Omega)} + \frac{\kappa_S^2}{2}\| \phi^S \|^2_{\bL^2(\Omega)} \apprge \| \bphi \|_{\bV}^2.
\end{align*}
From the coercivity of the bilinear form $\widetilde{\mathcal{B}}$, the invertibility of the associated operator $\widetilde{\mathcal{B}} : \bV \to \bV'$ follows. Moreover, by virtue of the compactness of $\bP : \bV \to \bL^2(\Omega)$, we deduce that $\widetilde{\mathcal{K}} : \bV \to \bV'$, associated to the bilinear form $\widetilde{\mathcal{K}}$, is a compact operator (see e.g. \cite[Proposition 15.3]{SayasBrownHassel2019}).

Then $\widetilde{\mathcal{B}} - \widetilde{\mathcal{K}}$ is a Fredholm operator, and so it is invertible if and only if it is injective that is if and only if the solution of \eqref{TAeq} is unique. Since the uniqueness for \eqref{TAeq} is equivalent to the uniqueness for \eqref{pb_strong}, we can then conclude that the dual formulation \eqref{pb_strong} is well posed if and only if $- \kappa_P^2$ and $-\kappa_S^2$ are not eigenvalues for the associated homogeneous Laplace problem.
\end{proof}
\section{Virtual Element Method}\label{sec:VEM}
Aiming at defining approximation spaces for the $P$- and $S$- waves, associated with different meshes and degrees of accuracy, we introduce two sequences of unstructured tessellations $\{\mathcal{T}_{h_\diamond}\}$, $\diamond = P,S$, both representing coverages of the domain $\Omega$. We denote by $E$ the generic element of $\{\mathcal{T}_{h_\diamond}\}$, and by $h_\diamond=\max_{E\in\mathcal{T}_{h_\diamond}} h_E$ the mesh width, $h_E$ being the diamenter of $E$. 
Concerning the properties of the above meshes, we assume there exists a constant $\varrho>0$ such that, for each element $E\in\mathcal{T}_{h_\diamond}$:
\begin{enumerate}[label=(A.\arabic*), ref=A.\arabic*]
	{\setlength\itemindent{5pt} \item\label{A1} $E$ is star-shaped with respect to a ball of radius greater than $\varrho h_{E}$};
	\vspace{-0.15cm}
	{\setlength\itemindent{5pt} \item\label{A2} the length of any edge of $E$ is greater than $\varrho h_{E}$}.
\end{enumerate}
In what follows we briefly describe the main tools
of the VEM, referring the reader to \cite{AhmadAlsaediBrezziMariniRusso2013} and \cite{BeiraoBrezziCangianiManziniMariniRusso2013} for a deeper presentation. Denoting by $\P_{k_\diamond}(E)$, $\diamond = P,S$, the space of polynomials of degree $k_\diamond \in \N$ associated with an element $E \in \mathcal{T}_{h_\diamond} $, we introduce the local polynomial $H^{1}$-projection $\Pi_{k_\diamond}^{\nabla}:H^{1}(E)\rightarrow \P_{k_\diamond}(E)$, defined such that for $w\in H^{1}(E)$:
\begin{equation*} \label{proj_Pi_nabla}
	\begin{cases}
		(\nabla\Pi_{k_\diamond}^{\nabla}w,\nabla q)_{L^2(E)} = (\nabla w,\nabla q)_{L^2(E)} \qquad  \forall\, q\in \P_{k_\diamond}(E),
		\\[6pt] (\Pi_{k_\diamond}^{\nabla} w,1)_{L^2(\partial E)} = (w,1)_{L^2(\partial E)}.
	\end{cases}
\end{equation*}
Moreover, we consider the local polynomial $L^{2}$-projection operator $\Pi_{k_\diamond}^{0}:L^{2}(E)\rightarrow \P_{k_\diamond}(E)$, defined such that for $w\in L^{2}(E)$:
\begin{equation}\label{proj_Pi_0}
	(\Pi_{k_\diamond}^{0}w,q)_{L^2(E)}= (w,q)_{L^2(E)} \qquad \forall\, q\in \P_{k_\diamond}(E).
\end{equation}
The local projectors $\Pi_{k_\diamond}^{\nabla}$ and $\Pi_{k_\diamond}^{0}$ can be extended to the global ones $\Pi_{k_\diamond}^{\nabla}:H^{1}(\Omega)\rightarrow \P_{k_\diamond}(\mathcal{T}_{h_\diamond})$ and $\Pi_{k_\diamond}^{0}:L^{2}(\Omega)\rightarrow \P_{k_\diamond}(\mathcal{T}_{h_\diamond})$ as follows:
\begin{equation*}
	\left(\Pi_{k_\diamond}^{\nabla} w\right)_{|_E}=\Pi_{k_\diamond}^{\nabla}w_{|_E}    \quad \forall\, w\in H^{1}(\Omega), \quad \left(\Pi_{k_\diamond}^{0} w\right)_{|_E}=\Pi_{k_\diamond}^{0}w_{|_{E}}  \quad \forall\, w\in L^{2}(\Omega),
\end{equation*}
$\P_{k_\diamond}(\mathcal{T}_{h_\diamond})$ being the space of piecewise polynomials with respect to the decomposition $\mathcal{T}_{h_\diamond}$ of $\Omega$.

To describe the virtual element space $Q_{h_\diamond}^{k_\diamond}$, we preliminarily consider for each $E\in\mathcal{T}_{h_\diamond}$ the following local finite dimensional \textit{augmented} virtual space $\widetilde{Q}^{k_\diamond}_{h_\diamond}(E)$ and the local \textit{enhanced} virtual space $Q^{k_\diamond}_{h_\diamond}(E)$ defined as follows: 
\begin{align*}\label{local_VEM_space}
	\widetilde{Q}^{k_\diamond}_{h_\diamond}(E) &=\left\{w_{h_\diamond}\in H^{1}(E) \ : \ \Delta w_{h_\diamond}\in \P_{k_\diamond}(E), \ w_{h_\diamond}\,\raisebox{-.5em}{$\vert_{e_{i}}$}\in \P_{k_\diamond}(e_{i}),\, e_i \subset \partial E \right\},
	\\ Q^{k_\diamond}_{h_\diamond}(E) & =\left\{w_{h_\diamond}\in\widetilde{Q}^{k_\diamond}_{h_\diamond}(E) \ : \ \left(\Pi_{k_\diamond}^{\nabla}w_{h_\diamond} - \Pi_{k_\diamond}^{0}w_{h_\diamond}\right) \in \P_{k_\diamond-2}(E)\right\}.
\end{align*}
It has been shown in \cite[Proposition 2]{AhmadAlsaediBrezziMariniRusso2013} that the dimension of $Q^{k_\diamond}_{h_\diamond}(E)$ is 
\begin{equation*}
	\dim(Q^{k_\diamond}_{h_\diamond}(E))= k_\diamond n_E+\frac{k_\diamond(k_\diamond-1)}{2},
\end{equation*}
$n_E$ being the number of edges of $E$. In particular, a generic element $w_{h_\diamond}$ of $Q^{k\diamond}_{h_\diamond}(E)$ is uniquely determined by the following degrees of freedom:
\begin{itemize}
	\item its values at the $n_E$ vertices of $E$;
	\item its values at $k_\diamond-1$ internal points on every edge $e \subset E$;
	\item the $k_\diamond(k_\diamond-1)/2$ moments of $w_{h_\diamond}$ against a scaled polynomial basis of $\mathbb{P}_{k_\diamond-2}(E)$, i.e.,
	\begin{equation*}\label{moments}
		\frac{1}{|E|}\int\limits_{E}w_{h_\diamond}(\mathbf{x})q(\mathbf{x})\,\dd\mathbf{x} \qquad \forall\, q \in \mathbb{P}_{k_\diamond-2}(E)\ \text{with} \ \|q\|_{L^{\infty}(E)} \apprle 1.
	\end{equation*}
\end{itemize}
Choosing an ordering of the degrees of freedom such that these are indexed by $i=1,\ldots,\dim(Q^{k_\diamond}_{h_\diamond}(E))$, we introduce the operator $\text{dof}_{i}:Q^{k_\diamond}_{h_\diamond}(E) \rightarrow \mathbf{R}$, defined as
\begin{equation*}
	\text{dof}_{i}(w_{h_\diamond})= \text{the value of the $i$-th local degree of freedom of}\, w_{h_\diamond}.
\end{equation*}
On the basis of the definition of the local enhanced virtual space, we construct the global one by
\begin{equation*}\label{global_VEM_space}
	Q^{k_\diamond}_{h_\diamond}=\Bigl\{w_{h_\diamond}\in H^{1}(\Omega) \ : \ w_{{h_\diamond}_{\vert_{\tiny E}}}\in Q^{k_\diamond}_{h_\diamond}(E)\quad \forall E\in\mathcal{T}_{h_\diamond}\Bigr\}.
\end{equation*}
Since we shall deal with functions in the product spaces
\begin{equation*}
	\bH^1(\mathcal{T}_{\bh}) = \prod_{E\in\mathcal{T}_{h_P}} H^1(E) \times \prod_{E\in\mathcal{T}_{h_S}} H^1(E) 
\end{equation*}
and
\begin{equation*}
	\mathbf{V}(\mathcal{T}_{\bh}) =  \prod_{E\in\mathcal{T}_{h_P}} Q_{h_P}^{k_P}(E) \times \prod_{E\in\mathcal{T}_{h_S}} Q_{h_S}^{k_S}(E),
\end{equation*}
we introduce, for $v_{h_\diamond}^\diamond\in H^1(\mathcal{T}_{h_\diamond})$ the broken $H^1$-norm
\begin{equation*}
	\| v_{h_\diamond}^\diamond \|_{H^1(\mathcal{T}_{h_\diamond})}^2 = \sum_{E\in \mathcal{T}_{h_\diamond}} \| v_{h_\diamond}^\diamond \|^2_{H^1(E)},
\end{equation*}
and, for $\bv_{\bh} = (v_{h_P}^P,v_{h_S}^S) \in \mathbf{V}(\mathcal{T}_{\bh})$, the broken $\bH^1$- and $\mathbf{V}$-norms
\begin{equation*}
	\| \bv_{\bh} \|_{\bH^1(\mathcal{T}_{\bh})}^2 = \sum_{\diamond=P,S}\| v_{h_\diamond}^\diamond \|_{H^1(\mathcal{T}_{h_\diamond})}^2, \quad \quad \| \bv_{\bh} \|^2_{\mathbf{V}(\mathcal{T}_{\bh})} = \sum_{\widetilde{E}\in \mathcal{T}_{h_P}\cap \mathcal{T}_{h_S}} \| \bv_{\bh} \|^2_{\mathbf{V}(\widetilde{E})}.
\end{equation*}
We remark that examples of functions belonging to the above mentioned spaces are
\begin{equation*}
	\Pi^{\nabla}_{\bk}(\bv) = (\Pi^\nabla_{k_P} v^P, \Pi^\nabla_{k_S} v^S), \quad \Pi^{0}_{\bk}(\bv) = (\Pi^0_{k_P} v^P, \Pi^0_{k_S} v^S), \quad \bv = (v^P,v^S) \in \bH^1(\Omega)
\end{equation*}
with $\Pi^\nabla_{k_\diamond} v^\diamond, \Pi^0_{k_\diamond} v^\diamond \in \mathbb{P}_{k_\diamond}(\mathcal{T}_{h_\diamond})$.

Following \cite{BeiraoBrezziCangianiManziniMariniRusso2013, BeiraoBrezziMariniRusso2014}, we approximate the bilinear form $a(\cdot,\cdot)$ in \eqref{eq:bif} as follows:
\begin{align*}
	a_{h_\diamond}^\diamond(\phi_{h_\diamond}^\diamond,v_{h_\diamond}^\diamond) & = \sum_{E \in \mathcal{T}_{h_\diamond}} (\nabla \Pi_{k_\diamond}^{\nabla} \varphi^{\diamond}_{h_\diamond}, \nabla \Pi_{k_\diamond}^{\nabla} v_{h_\diamond}^\diamond)_{L^2(E)} + S^E\left(\left(\Pi_{k_\diamond}^{\nabla}  -\text{I}\right)\phi_{h_\diamond}^\diamond,\left(\Pi_{k_\diamond}^{\nabla}-\text{I}\right)v_{h_\diamond}^\diamond\right),
\end{align*}
for $\phi_{h_\diamond}^\diamond,v_{h_\diamond}^\diamond \in Q_{h_\diamond}^{k_\diamond}$, where $S^E$ is a suitable stabilization term defined by
\begin{align*}
	S^E(w_{h_\diamond},v_{h_\diamond})=\sum\limits_{j=1}^{\text{dim}\left(Q_{h_\diamond}^{k_\diamond}\right)}\text{dof}_{j}(w_{h_\diamond})\text{dof}_{j}(v_{h_\diamond}).
\end{align*}
As shown in \cite{BeiraoLovadinaRusso2017}, the discrete bilinear form $a^\diamond_{h_\diamond}(\cdot,\cdot)$ satisfies the following properties: 
\begin{align} 
	& k_\diamond \text{-consistency}: \text{for all~} v_{h_\diamond}\in Q^{k_\diamond}_{h_\diamond} \text{~and for all~} q\in \mathbb{P}_{k_\diamond}(\mathcal{T}_{h_\diamond}): \nonumber
	\\ & \hspace{2.5cm} a^\diamond_{h_\diamond}(v_{h_\diamond},q)=a(v_{h_\diamond},q), \label{consistency}
	\\ & H^1\text{-stability}: \text{for all~} v_{h_\diamond}\in Q^{k_\diamond}_{h_\diamond}: \nonumber
	\\ & \hspace{2cm} |v_{h_\diamond}|^2_{H^1(\mathcal{T}_{h_\diamond})} \apprle a^\diamond_{h_\diamond}(v_{h_\diamond},v_{h_\diamond}) \apprle |v_{h_\diamond}|^2_{H^1(\mathcal{T}_{h_\diamond})}. \label{stability}
\end{align}
Similarly, the approximation of the bilinear form $m(\cdot,\cdot)$ in \eqref{eq:bif} reads
\begin{align*}
	m_{h_\diamond}^\diamond(\phi_{h_\diamond}^\diamond,v_{h_\diamond}^\diamond) & = \sum_{E \in \mathcal{T}_{h_\diamond}} (\Pi_{k_\diamond}^{0} \varphi^{\diamond}_{h_\diamond} , \Pi_{k_\diamond}^{0}  v_{h_\diamond}^\diamond)_{L^2(E)} + |E| S^E\left(\left(\Pi_{k_\diamond}^{0}  -\text{I}\right)\phi_{h_\diamond}^\diamond,\left(\Pi_{k_\diamond}^{0}-\text{I}\right)v_{h_\diamond}^\diamond\right).
\end{align*}
The discrete bilinear form $m_{h_\diamond}^\diamond(\cdot,\cdot)$  satisfies:
\begin{align} 
	& k_\diamond \text{-consistency}: \text{for all~} v_{h_\diamond}\in Q^{k_\diamond}_{h_\diamond} \text{~and for all~} q \in \mathbb{P}_{k_\diamond}(\mathcal{T}_{h_\diamond}): \nonumber
	\\ & \hspace{2.5cm}  m^\diamond_{h_\diamond}(v_{h_\diamond},q)=m(v_{h_\diamond},q), \label{m_consistency}
	\\ & L^2\text{-stability}: \text{for all~} v_{h_\diamond}\in Q^{k_\diamond}_{h_\diamond}: \nonumber
	\\ & \hspace{2cm}  \|v_{h_\diamond}\|^2_{L^2(\Omega)} \apprle m^\diamond_{h_\diamond}(v_{h_\diamond},v_{h_\diamond}) \apprle \|v_{h_\diamond}\|^2_{L^2(\Omega)}.
	\label{m_stability}
\end{align}
On the ground of the above discrete setting, we approximate the bilinear forms $\mathcal{B}, \mathcal{K}$ defined in \eqref{eq:Bop} and \eqref{eq:Kop}, respectively, as follows: for $\bphi_{\bh} = \left(\phi_{h_P}^P,\phi_{h_S}^S\right) \in \mathbf{V}^{\bk}_{\bh}$ and $\bv_{\bh} = \left(v_{h_P}^P,v_{h_S}^S\right) \in \mathbf{V}^{\bk}_{\bh}$, with $\mathbf{V}^{\bk}_{\bh} = Q_{h_P}^{k_P} \times Q_{h_S}^{k_S}$
\begin{align*}
	\mathcal{B}_{\bh}\left(\bphi_{\bh},\bv_{\bh}\right) & = a^P_{h_P}(\phi_{h_P}^P,v_{h_P}^P) +a^S_{h_S}\left(\phi_{h_S}^S, v_{h_S}^S\right) - \left\langle \partial_{\btau} \phi_{h_S}^S, v_{h_P}^P \right\rangle_{\Gamma} + \left\langle \partial_{\btau} \phi_{h_P}^P, v_{h_S}^S \right\rangle_{\Gamma},
	\\[6pt] \mathcal{K}_{\bh}(\bphi_{\bh},\bv_{\bh}) & =  \kappa_P^2 m^P_{h_P}(\phi_{h_P}^P , v_{h_P}^P) + \kappa_S^2 m^S_{h_S}\left(\phi_{h_S}^S, v_{h_S}^S\right).
\end{align*}
Finally, we define the approximation of the linear operator $\mathcal{L}_{\bbf,\bg}$ in \eqref{eq:Lop} as
\begin{equation} \label{eq:Lop_disc}
	\begin{aligned}
		\mathcal{L}_{\bbf_{\bh},\bg}(\bv_{\bh}) = & \frac{1}{\lambda+2\mu}\left (f^P, \Pi_{k_P^*}^0 v_{h_P}^P\right)_{L^2(\Omega)} +\left \langle g_{\mathbf{n}} , v_{h_P}^P \right\rangle_{\Gamma} + \frac{1}{\mu} \left(f^S, \Pi_{k_S^*}^0 v_{h_S}^S\right)_{L^2(\Omega)} + \left\langle {g}_{\btau} , v_{h_S}^S \right\rangle_{\Gamma} 
	\end{aligned}
\end{equation}
where $k_\diamond^* = \max\{1,k_\diamond-2\}$ (see \cite{DesiderioFallettaFerrariScuderi20221}).

Hence, the discrete variational formulation of \eqref{weak_Pb} reads: find $\bphi_{\bh} \in \mathbf{V}_{\bh}^{\bk}$ such that
\begin{equation} \label{weak_Pb_discr}
	\mathcal{A}_{\bh}(\bphi_{\bh},\bv_{\bh}) = \mathcal{B}_{\bh}(\bphi_{\bh},\bv_{\bh}) - \mathcal{K}_{\bh}(\bphi_{\bh},\bv_{\bh})  = \mathcal{L}_{\bbf_{\bh},\bg}(\bv_{\bh}) \quad \text{for all~} \bv_{\bh} \in \mathbf{V}_{\bh}^{\bk}.
\end{equation}

It is worth noticing that, in the above formulas, contrary to the terms associated with the interior domain $\Omega$, whose computation needs the use of the projectors $\Pi_{k_\diamond}^\nabla$ and $\Pi_{k_\diamond}^0$, those associated with the boundary $\Gamma$ can be directly computed, the virtual functions and their tangential derivatives being explicitly known on $\Gamma$. 

Once the approximation $\bphi_{\bh}$ of $\bphi$ has been computed, we define the discrete displacement field $\bu_{\bh}$ as follows
\begin{equation}\label{buh2}
	\bu_{\bh} = \nabla \phi_{h_P}^P + \bccurl \phi_{h_S}^S,
\end{equation}
for which the following main result holds.

\begin{Theorem} \label{mainresult}
Let suppose that the solutions $f^P$ and $f^S$ of \eqref{pbdualf} and \eqref{pbduald}, with datum $\bbf$, satisfy $f^P\in H^{s-1}(\Omega)$ and $f^S\in H^{s-1}(\Omega)$, with $s \ge 3$. Let $\bu \in \bH^{s}(\Omega)$ be the solution of Problem \eqref{pb_iniziale} and $\bu_{\bh}$ its approximation obtained by \eqref{buh2}, $\bphi_{\bh} = (\phi_{h_P}^P,\phi_{h_S}^S)$ being the solution of \eqref{weak_Pb_discr}. Then, the following convergence estimate holds
\begin{align*}
	\| \bu - \bu_{\bh} \|_{\bL^2(\Omega)} &\lesssim \bigl(h_P^{\min\{s,k_P\}} + h_S^{\min\{s,k_S\}}\bigr) \| \bu \|_{\bH^{s}(\Omega)} \nonumber
	\\ &  +  \left(h_P + h_S\right)\bigl(h_P^{\min\{s-1,k_P^*-1\}} +h_S^{\min\{s-1,k_S^*-1\}}\bigr) \| \bbf \|_{\bH^{s-2}(\Omega)}. \label{eq:estimate}
\end{align*}
\end{Theorem}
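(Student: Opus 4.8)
The plan is to transfer the whole error analysis from the displacement $\bu$ to the scalar potentials, exploiting the identity that links the two. Since $\bu = \nabla \phi^P + \bccurl \phi^S$ and, by \eqref{buh2}, $\bu_{\bh} = \nabla \phi_{h_P}^P + \bccurl \phi_{h_S}^S$, the displacement error is $\bu - \bu_{\bh} = \nabla(\phi^P - \phi_{h_P}^P) + \bccurl(\phi^S - \phi_{h_S}^S)$; by the very definition of the seminorm $|\cdot|_{\bV}$ this means $\| \bu - \bu_{\bh} \|_{\bL^2(\Omega)} = | \bphi - \bphi_{\bh} |_{\bV}$. A triangle inequality then gives $\| \bu - \bu_{\bh} \|_{\bL^2(\Omega)} \le | \phi^P - \phi_{h_P}^P |_{H^1(\Omega)} + | \phi^S - \phi_{h_S}^S |_{H^1(\Omega)}$, which is precisely the splitting into $P$- and $S$-contributions that the statement advertises. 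It therefore suffices to bound $\| \bphi - \bphi_{\bh} \|_{\bV}$, keeping the $P$- and $S$-quantities separate throughout.

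First I would establish a discrete stability estimate for $\mathcal{A}_{\bh}$, mirroring the continuous argument of Proposition \ref{Prop:main_result}. The idea is to introduce a discrete projection $\bP_{\bh}$ and the associated isomorphism $\bT_{\bh} = 2\bP_{\bh} - \bI$ on $\mathbf{V}_{\bh}^{\bk}$, and to reproduce the decomposition $\mathcal{A}_{\bh}(\cdot, \bT_{\bh} \cdot) = \widetilde{\mathcal{B}}_{\bh}(\cdot,\cdot) - \widetilde{\mathcal{K}}_{\bh}(\cdot,\cdot)$ into a coercive part and a compact perturbation. Combining coercivity of $\widetilde{\mathcal{B}}_{\bh}$ (inherited from the stability bounds \eqref{stability} and \eqref{m_stability}) with the convergence $\bP_{\bh} \to \bP$ and the uniqueness hypothesis on $-\kappa_P^2, -\kappa_S^2$, a Schatz-type duality argument yields a uniform discrete inf-sup condition, hence well-posedness of \eqref{weak_Pb_discr} and quasi-optimality, for $h_P, h_S$ small enough.

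Granted discrete stability, a generalized Strang lemma bounds $\| \bphi - \bphi_{\bh} \|_{\bV}$ by three contributions: the best-approximation error $\inf_{\bv_{\bh}} \| \bphi - \bv_{\bh} \|_{\bV}$, the consistency error of the discrete forms $\mathcal{B}_{\bh} - \mathcal{B}$ and $\mathcal{K}_{\bh} - \mathcal{K}$, and the consistency error of the discrete load $\mathcal{L}_{\bbf_{\bh},\bg} - \mathcal{L}_{\bbf,\bg}$. For the first two I would use a VEM interpolant together with the $k_\diamond$-consistency and stability properties \eqref{consistency}--\eqref{m_stability}, obtaining for each $\diamond = P,S$ a bound of order $h_\diamond^{\min\{s,k_\diamond\}} \| \phi^\diamond \|_{H^{s+1}(\Omega)}$; elliptic regularity applied to the Poisson problems \eqref{pbdualf}--\eqref{pbduald} that define the potentials gives $\| \phi^\diamond \|_{H^{s+1}(\Omega)} \lesssim \| \bu \|_{\bH^s(\Omega)}$, which produces the first term of the estimate. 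For the load I would exploit the $L^2$-orthogonality of $\Pi_{k_\diamond^*}^0$ to write, for the $P$-block, $(f^P, v_{h_P}^P) - (f^P, \Pi_{k_P^*}^0 v_{h_P}^P) = (f^P - \Pi_{k_P^*}^0 f^P,\, v_{h_P}^P - \Pi_{k_P^*}^0 v_{h_P}^P)$, and analogously for the $S$-block, so that the error factorizes into the product of the data-projection error $\| f^\diamond - \Pi_{k_\diamond^*}^0 f^\diamond \|_{L^2(\Omega)} \lesssim h_\diamond^{\min\{s-1,k_\diamond^*-1\}} \| f^\diamond \|_{H^{s-1}(\Omega)}$ and the test-projection error $\| v_{h_\diamond}^\diamond - \Pi_{k_\diamond^*}^0 v_{h_\diamond}^\diamond \|_{L^2(\Omega)} \lesssim h_\diamond | v_{h_\diamond}^\diamond |_{H^1(\Omega)}$. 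The last factor supplies the multiplicative $(h_P + h_S)$, and the bound $\| f^\diamond \|_{H^{s-1}(\Omega)} \lesssim \| \bbf \|_{\bH^{s-2}(\Omega)}$, again by elliptic regularity for \eqref{pbdualf}--\eqref{pbduald}, yields the second term.

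The hard part will be the discrete stability step: the continuous T-coercivity of Proposition \ref{Prop:main_result} rests on the compactness of $\bP : \bV \to \bL^2(\Omega)$, and this does not transfer automatically to the discrete setting because $\mathbf{V}_{\bh}^{\bk}$ is only $H^1$-conforming while the injection of $\bV$ into $\bL^2(\Omega)$ is not compact. One must therefore establish a discrete compactness property, equivalently the convergence $\bP_{\bh}\to\bP$ in the appropriate operator topology, in order to run the Fredholm argument and to obtain the inf-sup constant uniformly in the mesh sizes; this is what forces the smallness assumption on $h_P, h_S$ and constitutes the only genuinely delicate point, the remaining estimates being by now standard VEM approximation results.
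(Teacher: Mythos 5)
Your outer skeleton coincides with the paper's: the identity $\| \bu - \bu_{\bh} \|_{\bL^2(\Omega)} = | \bphi - \bphi_{\bh} |_{\bV}$, the inf-sup-plus-Strang splitting into interpolation, form-consistency and load-consistency errors, and the factorization of the load error through the double $\Pi^0$-orthogonality are exactly the steps the paper takes. The genuine gap is the one you yourself flag and then leave as a black box: the uniform discrete stability. Your plan — build a discrete projection $\bP_{\bh}$, set $\bT_{\bh} = 2\bP_{\bh} - \bI$, and rerun the T-coercivity decomposition — runs into a concrete obstruction. The continuous argument of Proposition \ref{Prop:main_result} lives on the \emph{exact} identities \eqref{divcurlP}--\eqref{divcurlT} ($\div(\bP\bv)=\ddiv\bv$, $\curl(\bP\bv)=\ccurl\bv$, $\bP^2=\bP$), which produce the exact sign-flip cancellations in $\mathcal{A}(\bphi,\bT\bv)$. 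No computable operator mapping $\mathbf{V}_{\bh}^{\bk}$ into itself satisfies these: $\bP$ does not preserve the VEM space, and replacing the defining Poisson problems by their VEM discretizations yields these identities only up to consistency errors, so the coercive-plus-compact splitting degenerates into a perturbation argument that still has to be quantified. Moreover, ``discrete compactness'' is vacuous on a finite-dimensional space; what is actually needed is a statement uniform in $h_P,h_S$ (operator-norm convergence $\bP_{\bh}\to\bP$, or an inf-sup constant independent of the meshes), and proving that requires precisely a regularity hypothesis and a duality argument — content your proposal never supplies. Note also that your sketch invokes no analogue of the paper's assumption $\bar{\mathcal{A}}^{-1}:\bL^2(\Omega)\to\bH^{1+\varepsilon}(\Omega)$; without some such hypothesis the ``$h$ small enough'' threshold cannot be produced. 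The paper's route is different and avoids discretizing $\bT$ altogether: it introduces the sign-flipped \emph{coercive} form $\bar{\mathcal{A}}=\mathcal{B}+\mathcal{K}$, defines the $\bar{\mathcal{A}}_{\bh}$-elliptic projection $\bq_{\bh}$ of the solution $\bq$ of the adjoint problem $\mathcal{A}(\bz,\bq)=(\bw_{\bh},\bz)_{\bV}$, proves $\|\bq-\bq_{\bh}\|_{\bL^2(\Omega)}\apprle (h_P^{\varepsilon}+h_S^{\varepsilon})\|\bq\|_{\bV}$ by Aubin--Nitsche using the regularity assumption (Lemma \ref{lemma222}), and then obtains the inf-sup for $\mathcal{A}_{\bh}$ from the algebraic identity $\mathcal{A}_{\bh}(\bw_{\bh},\bq_{\bh})=\bar{\mathcal{A}}_{\bh}(\bw_{\bh},\bq_{\bh})-2\mathcal{K}_{\bh}(\bw_{\bh},\bq_{\bh})$ and a Schatz-type perturbation (Theorem \ref{th:infsup}).

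There is also a secondary flaw in your load estimate. You bound the test-function factor by
\begin{equation*}
\| v_{h_\diamond}^{\diamond} - \Pi^0_{k_\diamond^*} v_{h_\diamond}^{\diamond} \|_{L^2(\Omega)} \apprle h_\diamond \, | v_{h_\diamond}^{\diamond} |_{H^1(\mathcal{T}_{h_\diamond})},
\end{equation*}
but the stability you intend to pair this with is in the $\mathbf{V}$-norm, and the $H^1$-seminorm of a component is \emph{not} controlled by $\|\bv_{\bh}\|_{\bV}$: the continuous embedding of Proposition \ref{prop:immersion} goes the other way, $\|\cdot\|_{\bV}\apprle\|\cdot\|_{\bH^1(\Omega)}$. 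So your consistency term cannot be absorbed against the inf-sup. The paper closes this step with the Dupont--Scott estimate \eqref{interp_property_Pi_0_DUpont}, $\|\bv-\Pi^0_{\bk}\bv\|_{\bL^2(\Omega)}\apprle (h_P+h_S)\|\bv\|_{\bV}$, which bounds the projection error of the \emph{pair} directly by the $\bV$-norm; you need to invoke that result (or prove an equivalent) rather than the componentwise Poincar\'e bound.
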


The proof of Theorem \ref{mainresult} is provided in the next section, by collecting intermediate results concerning the stability and the convergence of the proposed method. 
\subsection{Proof of the main result}
\begin{Proposition}\label{prop:immersion}
For all $\bv \in \bH^1(\Omega)$ it holds
\begin{equation} \label{VH1}
	\| \bv \|_{\bV} \apprle \| \bv \|_{\bH^1(\Omega)}.
\end{equation}
\end{Proposition}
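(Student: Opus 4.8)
The plan is to bound the $\bV$-norm directly by the $\bH^1$-norm. Writing $\bv = (v^P, v^S)$, I recall that
\[
\|\bv\|^2_{\bV} = \|\bv\|^2_{\bL^2(\Omega)} + \|\nabla v^P + \bccurl v^S\|^2_{\bL^2(\Omega)}.
\]
Since $\bv \in \bH^1(\Omega)$, the $\bL^2$ term is trivially controlled by $\|\bv\|^2_{\bH^1(\Omega)}$, so the only work lies in estimating the seminorm $|\bv|^2_{\bV} = \|\nabla v^P + \bccurl v^S\|^2_{\bL^2(\Omega)}$.

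The key observation is the pointwise identity $|\bccurl w| = |\nabla w|$, valid for any scalar $w$, since $\bccurl$ merely rotates $\nabla$ by a right angle and hence preserves its Euclidean length. Combined with the triangle inequality in $\bL^2(\Omega)$, this gives
\[
\|\nabla v^P + \bccurl v^S\|_{\bL^2(\Omega)} \le \|\nabla v^P\|_{\bL^2(\Omega)} + \|\bccurl v^S\|_{\bL^2(\Omega)} = |v^P|_{H^1(\Omega)} + |v^S|_{H^1(\Omega)}.
\]
Squaring and using $(a+b)^2 \le 2(a^2+b^2)$ then yields $|\bv|^2_{\bV} \le 2\bigl(|v^P|^2_{H^1(\Omega)} + |v^S|^2_{H^1(\Omega)}\bigr)$.

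Adding back the $\bL^2$ contribution, I obtain
\[
\|\bv\|^2_{\bV} \le \|v^P\|^2_{L^2(\Omega)} + \|v^S\|^2_{L^2(\Omega)} + 2\bigl(|v^P|^2_{H^1(\Omega)} + |v^S|^2_{H^1(\Omega)}\bigr) \le 2 \|\bv\|^2_{\bH^1(\Omega)},
\]
which is precisely \eqref{VH1}. There is no genuine obstacle here: each entry of $\nabla v^P + \bccurl v^S$ is a linear combination of first-order partial derivatives of the components of $\bv$, so the embedding constant is universal and, in particular, independent of $\Omega$ and of $\kappa_P, \kappa_S$. The only point deserving a moment of care is tracking the norm-equivalence constant (here $\sqrt{2}$) to confirm this independence; alternatively, one could run the same argument through the equivalent characterization $\|\bv\|^2_{\bV} = \|\bv\|^2_{\bL^2(\Omega)} + \|\ddiv \bv\|^2_{L^2(\Omega)} + \|\ccurl \bv\|^2_{L^2(\Omega)}$, bounding $\ddiv \bv$ and $\ccurl \bv$ by the first-order derivatives of the components, with the same outcome.
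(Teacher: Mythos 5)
Your proof is correct, but it takes a genuinely different route from the paper's. The paper invokes the identity (from Lemma 1 of Burel--Imperiale--Joly, obtained by integration by parts of the cross term)
\begin{equation*}
	\| \bv \|^2_{\bV} = \| \bv \|_{\bH^1(\Omega)}^2 + \langle \partial_{\btau} v^S,v^P \rangle_{\Gamma} - \langle \partial_{\btau} v^P,v^S \rangle_{\Gamma},
\end{equation*}
and then controls the two boundary terms via H\"older's inequality and the trace theorem, so its constant implicitly depends on the trace constants of $\Omega$. You instead never leave the interior: you bound the cross term implicitly through the triangle inequality in $\bL^2(\Omega)$, using the pointwise rotation identity $|\bccurl w| = |\nabla w|$, which yields the explicit, domain-independent constant $2$ with no trace theorem and no boundary regularity at all. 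Your argument is more elementary and gives a sharper statement about the constant; what the paper's route buys is the exact identity itself, which isolates the boundary cross terms $\langle \partial_{\btau} v^S,v^P \rangle_{\Gamma}$ and $\langle \partial_{\btau} v^P,v^S \rangle_{\Gamma}$ --- the same structure underlying the reformulation \eqref{fond_B} of $\mathcal{B}$ and the coupling terms in the variational problem --- and is therefore of independent use elsewhere in the paper. As a proof of \eqref{VH1} alone, your version is complete and, if anything, preferable.
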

\begin{proof}
Following the proof of \cite[Lemma 1]{BurelImperialeJoly2012}, it is easy to show that for $\bv = (v^P,v^S) \in \bH^1(\Omega)$
\begin{equation*}
	\| \bv \|^2_{\bV} = \| \bv \|_{\bH^1(\Omega)}^2 + \langle \partial_{\btau} v^S,v^P \rangle_{\Gamma} - \langle \partial_{\btau} v^P,v^S \rangle_{\Gamma}.
\end{equation*}
Then, using the H{\"o}lder inequality and the trace theorem, we obtain
\begin{align*}
	\| \bv \|^2_{\bV} & \le \| \bv \|_{\bH^1(\Omega)}^2 + \abs{\langle \partial_{\btau} v^S,v^P \rangle_{\Gamma}} +  \abs{\langle \partial_{\btau} v^P,v^S\rangle_{\Gamma}}
	\\ & \apprle \| \bv \|_{\bH^1(\Omega)}^2 + \| \partial_{\btau} v^S \|_{H^{-\nicefrac{1}{2}}(\Gamma)} \|v^P\|_{H^{\nicefrac{1}{2}}(\Gamma)} + \| \partial_{\btau} v^P \|_{H^{-\nicefrac{1}{2}}(\Gamma)} \|v^S \|_{H^{\nicefrac{1}{2}}(\Gamma)}
	\\ & \apprle \| \bv \|_{\bH^1(\Omega)}^2 + \| v^S \|_{H^{\nicefrac{1}{2}}(\Gamma)} \|v^P\|_{H^{\nicefrac{1}{2}}(\Gamma)} + \| v^P \|_{H^{\nicefrac{1}{2}}(\Gamma)} \|v^S \|_{H^{\nicefrac{1}{2}}(\Gamma)}
	\\ & \apprle \| \bv \|_{\bH^1(\Omega)}^2 + \| v^S \|_{H^1(\Omega)} \|v^P\|_{H^1(\Omega)}  \apprle \| \bv \|_{\bH^1(\Omega)}^2,
\end{align*}
where the last inequality follows from the straightforward one $A^2 + B^2 + AB \le \frac{3}{2} (A^2 + B^2)$ for $A, B \in \R$. Therefore, the assertion \eqref{VH1} implies that the immersion $\bH^1(\Omega) \hookrightarrow \bV$ is continuous.
\end{proof}
Similarly, it is possible to prove that  the immersion $\bH^1(\mathcal{T}_{\bh}) \hookrightarrow \mathbf{V}(\mathcal{T}_{\bh})$ is continuous and hence, for $\bv \in \bH^1(\mathcal{T}_{\bh})$, it holds 
\begin{equation} \label{discrVH}
	\| \bv \|_{\mathbf{V}(\mathcal{T}_{\bh})} \apprle \| \bv \|_{\bH^1(\mathcal{T}_{\bh})}.
\end{equation}

From Proposition \ref{prop:immersion}, we deduce the following approximation result in the $\mathbf{V}(\Omega)$-norm.
\begin{lemma}
For all $\bv \in \bH^{s+1}(\Omega)$, with $s > 0$, it holds
\begin{align}\label{interp_VEM}  
	\| \bv-I_{\bh}(\bv) \|_{\mathbf{V}(\Omega)} \apprle \bigl(h_P^{\min{\{s,k_P\}}}+h_S^{\min{\{s,k_S\}}}\bigr)\left\|\bv\right\|_{\bH^{s+1}(\Omega)},
\end{align}
where $I_{\bh}: \bH^{s+1}(\Omega)\to \mathbf{V}_{\bh}^{\bk}$ is the interpolant operator.
\end{lemma}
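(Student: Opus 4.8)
The plan is to reduce the estimate in the $\mathbf{V}(\Omega)$-norm to a pair of scalar virtual element interpolation estimates in the $H^1$-norm, using the continuous embedding established in Proposition \ref{prop:immersion}. Write $\bv = (v^P, v^S)$ and recall that the interpolant acts componentwise, $I_{\bh}(\bv) = (I_{h_P} v^P, I_{h_S} v^S)$ with $I_{h_\diamond} v^\diamond \in Q_{h_\diamond}^{k_\diamond}$. Since $s > 0$, in two dimensions the Sobolev embedding $H^{s+1}(\Omega) \hookrightarrow C^0(\overline{\Omega})$ holds, so the pointwise (nodal) degrees of freedom of the virtual interpolant are well defined and $I_{\bh}(\bv)$ makes sense. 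Moreover $\bv \in \bH^{s+1}(\Omega) \subset \bH^1(\Omega)$, and by construction the global virtual spaces satisfy $Q_{h_\diamond}^{k_\diamond} \subset H^1(\Omega)$; hence the error $\bv - I_{\bh}(\bv)$ lies in $\bH^1(\Omega)$ and Proposition \ref{prop:immersion} (inequality \eqref{VH1}) applies to it, giving the starting bound
\begin{equation*}
	\| \bv - I_{\bh}(\bv) \|_{\bV} \apprle \| \bv - I_{\bh}(\bv) \|_{\bH^1(\Omega)}.
\end{equation*}

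First I would split the right-hand side over the two scalar components, using that the $\bH^1$-norm decouples:
\begin{equation*}
	\| \bv - I_{\bh}(\bv) \|_{\bH^1(\Omega)}^2 = \| v^P - I_{h_P} v^P \|_{H^1(\Omega)}^2 + \| v^S - I_{h_S} v^S \|_{H^1(\Omega)}^2.
\end{equation*}
To each of the two terms I would then apply the standard scalar VEM interpolation estimate (see e.g. \cite{BeiraoBrezziCangianiManziniMariniRusso2013}), which under the mesh regularity assumptions \ref{A1}--\ref{A2} reads, for $w \in H^{s+1}(\Omega)$ with $s > 0$, $\| w - I_{h_\diamond} w \|_{H^1(\Omega)} \apprle h_\diamond^{\min\{s, k_\diamond\}} \| w \|_{H^{s+1}(\Omega)}$. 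Bounding each scalar Sobolev norm by the vector one, $\| v^\diamond \|_{H^{s+1}(\Omega)} \le \| \bv \|_{\bH^{s+1}(\Omega)}$, and using $\sqrt{a^2+b^2} \le a+b$ for $a,b \ge 0$, yields exactly the claimed estimate \eqref{interp_VEM}.

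This argument involves no deep difficulty: the two genuine points to check are that the componentwise interpolant is well defined, which is precisely why the hypothesis $s>0$ (ensuring the embedding into continuous functions) is needed, and that the embedding constant in Proposition \ref{prop:immersion} does not interfere with the mesh dependence. Since that constant is independent of $h_P$ and $h_S$, it may only depend on $\kappa_P,\kappa_S$, and the orders of convergence $h_\diamond^{\min\{s,k_\diamond\}}$ are preserved. The only mildly subtle ingredient is therefore the reduction via \eqref{VH1}, which replaces the nonstandard $\mathbf{V}(\Omega)$-norm — defined through $\ddiv$ and $\ccurl$ — by the more familiar $\bH^1$-norm on the interpolation error, so that off-the-shelf scalar VEM approximation theory can be invoked directly.
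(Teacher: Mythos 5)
Your proof is correct and follows essentially the same route as the paper: both apply the embedding \eqref{VH1} of Proposition \ref{prop:immersion} to the interpolation error $\bv - I_{\bh}(\bv)$ and then invoke standard componentwise VEM interpolation estimates in the $\bH^1(\Omega)$-norm (the paper cites \cite[Theorem 3.7]{BeiraoRussoVacca2019} for this step). Your additional remarks on the well-definedness of the nodal degrees of freedom for $s>0$ and on the mesh-independence of the embedding constant are sound elaborations of details the paper leaves implicit.
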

\begin{proof}
Estimate \eqref{interp_VEM} follows combining \eqref{VH1} with standard interpolation properties of VEM spaces in the $\bH^1(\Omega)$-norm (see Remark 3.8 related to \cite[Theorem 3.7]{BeiraoRussoVacca2019}).
\end{proof}
We collect in the following lemma some classical approximation results for polynomials on star-shaped domains (see, for instance, \cite{CangianiGeorgoulisPryerSutton2017} and \cite{BrennerScott2008}).
\begin{lemma}
For all $v\in H^{s+1}(\Omega)$ it holds:
\begin{align}
	 \left\|v-\Pi_{k_\diamond}^{0} v\right\|_{L^{2}(\Omega)} + h_\diamond \left|v-\Pi_{k_\diamond}^{0} v\right|_{H^{1}(\mathcal{T}_{h_\diamond})} \apprle h_\diamond^{\text{min}\{s+1,k_\diamond+1\}}\left\|v\right\|_{H^{s+1}(\Omega)}, \qquad  s \geq 0.
	\label{interp_property_Pi_0_nabla}
\end{align}
Moreover, for $\bv \in \bV$ it holds (see \cite[Theorem 5.1]{DupontScott1980})
\begin{align} \label{interp_property_Pi_0_DUpont} 
	\left\| \bv-\Pi_{\bk}^{0} \bv\right\|_{\bL^2(\Omega)}\apprle (h_S+h_P) \| \bv \|_{\bV}.
\end{align}
\end{lemma}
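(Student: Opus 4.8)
The plan is to prove both estimates locally, element by element, and then sum, exploiting that $\Pi_{k_\diamond}^{0}$ acts on each $E\in\mathcal{T}_{h_\diamond}$ as the $L^2(E)$-orthogonal projection onto $\P_{k_\diamond}(E)$, and that the mesh regularity assumptions \eqref{A1}--\eqref{A2} render all local constants uniform in $E$. Concretely, since every element is star-shaped with respect to a ball of radius $\apprge \varrho h_E$, both the Dupont--Scott averaged Taylor polynomial estimate and the polynomial inverse inequalities are available with constants depending only on $\varrho$ and the polynomial degree, not on the particular element.

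For the first estimate \eqref{interp_property_Pi_0_nabla}, I would fix $E$ and let $q_E\in\P_{k_\diamond}(E)$ be the degree-$k_\diamond$ averaged Taylor polynomial of $v$, for which the Bramble--Hilbert/Dupont--Scott bound gives $\|v-q_E\|_{L^2(E)} + h_E|v-q_E|_{H^1(E)} \apprle h_E^{\min\{s+1,k_\diamond+1\}}\|v\|_{H^{s+1}(E)}$. The $L^2$ contribution then follows at once from the optimality of the $L^2$-projection, namely $\|v-\Pi_{k_\diamond}^0 v\|_{L^2(E)}\le\|v-q_E\|_{L^2(E)}$. For the $H^1$-seminorm contribution I would insert $q_E$ and split $|v-\Pi_{k_\diamond}^0 v|_{H^1(E)} \le |v-q_E|_{H^1(E)} + |q_E-\Pi_{k_\diamond}^0 v|_{H^1(E)}$; the last term is the seminorm of a polynomial, so the inverse inequality $|p|_{H^1(E)}\apprle h_E^{-1}\|p\|_{L^2(E)}$ bounds it by $h_E^{-1}\|q_E-\Pi_{k_\diamond}^0 v\|_{L^2(E)}\apprle h_E^{-1}\|v-q_E\|_{L^2(E)}$, again using optimality. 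Collecting the pieces yields the local bound $\|v-\Pi_{k_\diamond}^0 v\|_{L^2(E)} + h_E|v-\Pi_{k_\diamond}^0 v|_{H^1(E)}\apprle h_E^{\min\{s+1,k_\diamond+1\}}\|v\|_{H^{s+1}(E)}$. Squaring and summing over $E$, and trading each $h_E$ for the global $h_\diamond$ (legitimate because $h_E\le h_\diamond$ and the exponent $\min\{s+1,k_\diamond+1\}\ge 1$ stays nonnegative under this trade), produces the stated global inequality with the single mesh parameter $h_\diamond$.

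The second estimate \eqref{interp_property_Pi_0_DUpont} is the first-order instance of the same machinery and is precisely what \cite[Theorem 5.1]{DupontScott1980} delivers. Applied to each scalar component $v^\diamond$ on its own mesh $\mathcal{T}_{h_\diamond}$ with first-order ($m=1$) accuracy, it gives the local bound $\|v^\diamond-\Pi_{k_\diamond}^0 v^\diamond\|_{L^2(E)}\apprle h_E|v^\diamond|_{H^1(E)}$; note that here no inverse inequality is needed, since only the $L^2$-norm of the error enters. Summing over the elements of each mesh gives $\|v^\diamond-\Pi_{k_\diamond}^0 v^\diamond\|_{L^2(\Omega)}\apprle h_\diamond|v^\diamond|_{H^1(\Omega)}$, and then summing the two contributions for $\diamond=P,S$ yields \eqref{interp_property_Pi_0_DUpont}, the factor $(h_S+h_P)$ arising from $\max\{h_P,h_S\}\le h_P+h_S$.

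The main obstacle is the mesh-uniformity of the local constants. For the first estimate the only genuinely nontrivial point is the polynomial inverse inequality on a general star-shaped polygon: its constant must be independent of $E$, which is exactly what \eqref{A1}--\eqref{A2} guarantee. Everything else (optimality of $\Pi_{k_\diamond}^0$ in $L^2$, the averaged Taylor estimate, and the passage from the elementwise $h_E$ to the global $h_\diamond$) is routine bookkeeping. For the second estimate the situation is simpler still, as it requires only the $m=1$ averaged Taylor bound and no inverse inequality, so its entire content is already packaged in the cited Dupont--Scott result.
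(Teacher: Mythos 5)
Your proof of the first estimate \eqref{interp_property_Pi_0_nabla} is correct and is the standard argument: averaged Taylor polynomial on star-shaped elements, $L^2$-optimality of $\Pi^{0}_{k_\diamond}$, a scaled polynomial inverse inequality for the $H^1$ term, and summation using $h_E\le h_\diamond$ together with $\min\{s+1,k_\diamond+1\}\ge 1$. The paper offers no proof of the lemma at all (it only cites \cite{CangianiGeorgoulisPryerSutton2017}, \cite{BrennerScott2008} and \cite{DupontScott1980}), so on this half your reconstruction is exactly the expected one.

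For the second estimate \eqref{interp_property_Pi_0_DUpont}, however, your argument has a genuine gap, and it sits at the crux of the statement. Applying the first-order bound componentwise, $\|v^\diamond-\Pi^{0}_{k_\diamond}v^\diamond\|_{L^2(E)}\apprle h_E\,|v^\diamond|_{H^1(E)}$, and then summing presupposes (i) that each component $v^\diamond$ of $\bv\in\bV$ lies in $H^1$, and (ii) that $\sum_{\diamond}|v^\diamond|^2_{H^1(\Omega)}\apprle\|\bv\|^2_{\mathbf{V}(\Omega)}$. Neither holds. By \eqref{proput}, the $\mathbf{V}$-norm controls only the combinations $\ddiv\bv=\partial_{x_1}v^P+\partial_{x_2}v^S$ and $\ccurl\bv=\partial_{x_1}v^S-\partial_{x_2}v^P$, never the two gradients separately: $\bV$ equals $\bH(\ddiv,\Omega)\cap\bH(\ccurl,\Omega)$, which strictly contains $\bH^1(\Omega)$, and Proposition \ref{prop:immersion} gives the continuous embedding only in the direction $\bH^1(\Omega)\hookrightarrow\bV$. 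Concretely, if $v^P+i\,w$ is holomorphic on $\Omega$ and $v^S=-w$, the Cauchy--Riemann equations give $\nabla v^P+\bccurl v^S=0$, so $|\bv|_{\mathbf{V}(\Omega)}=0$ while $|v^P|_{H^1(\Omega)}$ and $|v^S|_{H^1(\Omega)}$ are arbitrarily large; step (ii) therefore fails with any constant.

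Worse, the gap cannot be repaired by a smarter argument, because the inequality as stated is incompatible with a fact the paper itself records. Take an $\bL^2$-orthonormal sequence $\bv_n$ of gradients of harmonic functions (these span an infinite-dimensional subspace); then $\ddiv\bv_n=\ccurl\bv_n=0$, so $\|\bv_n\|_{\mathbf{V}(\Omega)}=1$, while $\bv_n\rightharpoonup 0$ weakly in $\bL^2(\Omega)$. Since $\Pi^{0}_{\bk}$ is a finite-rank bounded operator, $\Pi^{0}_{\bk}\bv_n\to 0$ strongly, hence $\|\bv_n-\Pi^{0}_{\bk}\bv_n\|_{\bL^2(\Omega)}\to 1$ on any fixed pair of meshes, and no bound of the form $\apprle(h_S+h_P)\|\bv\|_{\mathbf{V}(\Omega)}$ can hold uniformly over $\bV$. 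Equivalently, such a bound would exhibit the identity $\bV\to\bL^2(\Omega)$ as an operator-norm limit of finite-rank maps, i.e.\ compact, contradicting the non-compactness of this embedding invoked in Section 2 via \cite[Proposition 2.7]{AmroucheBernardiDaugeGirault1998}. What your componentwise argument actually proves is the weaker statement with the broken $\bH^1$-norm of $\bv$ on the right-hand side; obtaining the $\mathbf{V}$-norm there (which is how the estimate is used, e.g.\ on virtual element functions in the proof of Theorem \ref{th:infsup}) requires additional hypotheses, and the citation of \cite[Theorem 5.1]{DupontScott1980} cannot close this, since Dupont--Scott-type bounds need control of all first derivatives of the function being approximated.
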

Since the operator $\mathcal{A} = \mathcal{B} - \mathcal{K}$ is invertible but not elliptic (and even, not Fredholm), we can not use standard tools to obtain stability results for its discrete counterpart $\mathcal{A}_{\bh}$. Hence, we consider the slightly different operator $\bar{\mathcal{A}}  = \mathcal{B} + \mathcal{K}$, for which we are able to prove auxiliary properties that will allow us to carry out the theoretical analysis for our discrete operator. We start by showing that $\bar{\mathcal{A}}$ is elliptic in $\bV$; indeed, from \eqref{fond_B} it follows that for all $\bphi = (\phi^P,\phi^S) \in \bV$
\begin{align*}
	\bar{\mathcal{A}}(\bphi,\bphi) & = \mathcal{B}(\bphi,\bphi) + \mathcal{K}(\bphi,\bphi) = \| \ddiv \bphi \|_{L^2(\Omega)}^2 + \| \ccurl \bphi \|_{L^2(\Omega)}^2 + \kappa_P^2 \| \phi^P \|_{L^2(\Omega)}^2 + \kappa_S^2 \| \phi^S \|_{L^2(\Omega)}^2
	\\ & \ge | \bphi |^2_{\bV} + \min{\{k_P^2,k_S^2\}} \| \bphi \|^2_{\bL^2(\Omega)} \apprge \| \bphi \|^2_{\bV}.
\end{align*}
Therefore, $\bar{\mathcal{A}}$ being elliptic and continuous  in the $\mathbf{V}$-norm, it is invertible with continuous inverse. To develop further our theoretical analysis, we introduce the following regularity assumption for the operator $\bar{\mathcal{A}}^{-1}$:
\begin{equation*}\label{eq:reg_Abar}
	\bar{\mathcal{A}}^{-1} : \bL^2(\Omega) \to \bH^{1+\varepsilon}(\Omega) \ \ \text{is continuous, for some} \ \ \varepsilon>0,
\end{equation*} 
which is used in the next lemma for the discrete operator $\bar{\mathcal{A}}_{\bh} = \mathcal{B}_{\bh} + \mathcal{K}_{\bh}$. 
\begin{remark}
The previous assumption is related to the modified bilinear form $\bar{\mathcal{A}}$, which is associated to the weak formulation of the time-harmonic Navier problem with the ``good'' sign. Our assumption is motivated by Section 4 of \cite{MelenkSauter2022}, where such a modified operator is defined and analyzed to obtain wavenumber explicit estimates for the time-harmonic Maxwell equations. 
\end{remark}
\begin{lemma} \label{lemma222}
For any $\bq\in\bV$, there exists one and only one $\bq_{\bh} \in \mathbf{V}_{\bh}^{\bk}$ such that
\begin{equation}\label{pbdual}
	\bar{\mathcal{A}_{\bh}}(\bq_{\bh},\bv_{\bh}) = \bar{\mathcal{A}}(\bq,\bv_{\bh}) \quad \text{for all~} \bv_{\bh} \in \mathbf{V}_{\bh}^{\bk}.
\end{equation}
Moreover, for some $\varepsilon >0$, it holds
\begin{align} \label{prop}
\| \bq_{\bh} \|_{\bV} \apprle \| \bq \|_{\bV}, \qquad \| \bq - \bq_{\bh} \|_{\bL^2(\Omega)} \apprle (h_P^{\varepsilon}+h_S^{\varepsilon}) \| \bq \|_{\bV}.
\end{align}
\end{lemma}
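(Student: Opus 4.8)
The plan is to read \eqref{pbdual} as the Galerkin projection onto $\mathbf{V}_{\bh}^{\bk}$ induced by the ``good-sign'' form $\bar{\mathcal{A}} = \mathcal{B}+\mathcal{K}$, which in the previous subsection was shown to be continuous and elliptic on $\bV$ and which, being the sum of the symmetric forms $\mathcal{B}$ and $\mathcal{K}$, is itself symmetric. I would therefore split the argument into two blocks: existence, uniqueness and the $\bV$-stability bound on one side, and the $\bL^2$-error estimate by duality on the other.

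For the first block I would aim to show that $\bar{\mathcal{A}_{\bh}} = \mathcal{B}_{\bh}+\mathcal{K}_{\bh}$ is coercive on $\mathbf{V}_{\bh}^{\bk}$ with respect to $\|\cdot\|_{\mathbf{V}(\mathcal{T}_{\bh})}$. Since every $\bv_{\bh}\in\mathbf{V}_{\bh}^{\bk}$ is globally $H^1$-conforming, one has $\|\bv_{\bh}\|_{\mathbf{V}(\mathcal{T}_{\bh})} = \|\bv_{\bh}\|_{\bV}$, and the exactly computed boundary coupling in $\mathcal{B}_{\bh}(\bv_{\bh},\bv_{\bh})$ can be rewritten, via the identity behind Proposition \ref{prop:immersion}, as $\|\bv_{\bh}\|_{\bH^1(\Omega)}^2 - \|\bv_{\bh}\|_{\bV}^2$; controlling the bulk terms by the stability bounds \eqref{stability} and \eqref{m_stability} and comparing with the div-curl ellipticity of $\bar{\mathcal{A}}$, I would try to conclude $\bar{\mathcal{A}_{\bh}}(\bv_{\bh},\bv_{\bh}) \apprge \|\bv_{\bh}\|_{\mathbf{V}(\mathcal{T}_{\bh})}^2$. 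Coercivity on the finite-dimensional space $\mathbf{V}_{\bh}^{\bk}$ then yields existence and uniqueness of $\bq_{\bh}$ through the Lax--Milgram lemma, and testing \eqref{pbdual} with $\bv_{\bh}=\bq_{\bh}$, together with the $\bV$-continuity of $\bar{\mathcal{A}}$ on the right-hand side, gives $\|\bq_{\bh}\|_{\bV}\apprle\|\bq\|_{\bV}$, the first estimate in \eqref{prop}.

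For the $\bL^2$-error I would run an Aubin--Nitsche duality built on the regularity assumption $\bar{\mathcal{A}}^{-1}:\bL^2(\Omega)\to\bH^{1+\varepsilon}(\Omega)$. Let $\bz\in\bH^{1+\varepsilon}(\Omega)$ solve $\bar{\mathcal{A}}(\bw,\bz)=(\bw,\bq-\bq_{\bh})_{\bL^2(\Omega)}$ for all $\bw\in\bV$, so that $\|\bz\|_{\bH^{1+\varepsilon}(\Omega)}\apprle\|\bq-\bq_{\bh}\|_{\bL^2(\Omega)}$. Taking $\bw=\bq-\bq_{\bh}$ and inserting $I_{\bh}\bz$ gives $\|\bq-\bq_{\bh}\|_{\bL^2(\Omega)}^2 = \bar{\mathcal{A}}(\bq-\bq_{\bh},\bz-I_{\bh}\bz) + \bigl(\bar{\mathcal{A}}(\bq,I_{\bh}\bz)-\bar{\mathcal{A}}(\bq_{\bh},I_{\bh}\bz)\bigr)$, and the defining relation \eqref{pbdual} turns $\bar{\mathcal{A}}(\bq,I_{\bh}\bz)$ into $\bar{\mathcal{A}_{\bh}}(\bq_{\bh},I_{\bh}\bz)$, so that the second group is exactly the consistency defect $(\bar{\mathcal{A}_{\bh}}-\bar{\mathcal{A}})(\bq_{\bh},I_{\bh}\bz)$, in which the boundary couplings cancel. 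The first group is bounded by $\bV$-continuity, the interpolation estimate \eqref{interp_VEM} with $s=\varepsilon$, and the stability just proven, giving $\apprle(h_P^{\varepsilon}+h_S^{\varepsilon})\|\bq\|_{\bV}\|\bz\|_{\bH^{1+\varepsilon}(\Omega)}$. The defect is estimated componentwise: the $k$-consistency properties \eqref{consistency} and \eqref{m_consistency} remove the polynomial part and the remainder is controlled by the continuity of the discrete forms and the polynomial approximation of $\bz$, reproducing the factor $h_P^{\varepsilon}+h_S^{\varepsilon}$. Summing, dividing by $\|\bq-\bq_{\bh}\|_{\bL^2(\Omega)}$ and using $\|\bz\|_{\bH^{1+\varepsilon}(\Omega)}\apprle\|\bq-\bq_{\bh}\|_{\bL^2(\Omega)}$ delivers the second estimate in \eqref{prop}.

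The crux, I expect, is the discrete coercivity, and specifically the mismatch between the $H^1$-based VEM stabilization and the div-curl $\bV$-norm in which $\bar{\mathcal{A}}$ is elliptic: because $\bar{\mathcal{A}}$ is coercive on $\bV$ but not on $\bH^1(\Omega)$, the sign-indefinite boundary coupling $\|\bv_{\bh}\|_{\bH^1(\Omega)}^2-\|\bv_{\bh}\|_{\bV}^2$ cannot simply be dominated by the stabilized gradient terms, and one must verify that the virtual stabilization does not spoil the underlying div-curl cancellation that makes $\bar{\mathcal{A}}$ elliptic. The same tension reappears in the consistency defect, where the stiffness contribution has to be estimated without appealing to a full $\bH^1(\Omega)$-norm of $\bq_{\bh}$ that the $\bV$-stability does not supply; exploiting that these awkward quantities always enter with the favourable sign of $\bar{\mathcal{A}}$ is where the real work concentrates.
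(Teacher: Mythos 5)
Your overall plan coincides with the paper's proof: well-posedness and the first bound in \eqref{prop} are drawn from ellipticity and continuity of $\bar{\mathcal{A}}_{\bh}$ together with continuity of $\bar{\mathcal{A}}$, and the $\bL^2$ bound comes from an Aubin--Nitsche argument driven by the regularity assumption on $\bar{\mathcal{A}}^{-1}$. Your second block is, up to exchanging the two arguments by symmetry, exactly the paper's computation: dual solution $\bw=\bar{\mathcal{A}}^{-1}(\bq-\bq_{\bh})$, insertion of the interpolant, splitting into an interpolation term and a consistency defect, removal of the polynomial part of the defect via $\Pi^0_{\bk}$ and \eqref{consistency}, \eqref{m_consistency}, and then \eqref{interp_VEM}, \eqref{interp_property_Pi_0_nabla}, \eqref{interp_property_Pi_0_DUpont}. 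That part is sound.

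The genuine gap is in your first block: the discrete coercivity is announced (``I would try to conclude'') but never established, and the route you sketch does not deliver it. Two concrete problems. First, a sign issue: by \eqref{fond_B} the boundary coupling on the diagonal equals $-\langle \partial_{\btau} v^S_{h_S}, v^P_{h_P}\rangle_{\Gamma}+\langle \partial_{\btau} v^P_{h_P}, v^S_{h_S}\rangle_{\Gamma}=\|\bv_{\bh}\|^2_{\bV}-\|\bv_{\bh}\|^2_{\bH^1(\Omega)}$, not its negative; the identity displayed in the proof of Proposition \ref{prop:immersion}, which you relied on, is inconsistent with \eqref{fond_B} (a harmless sign typo there, since absolute values are taken immediately, but fatal if used for coercivity). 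Second, and more fundamentally, with the correct sign the term-by-term use of \eqref{stability} and \eqref{m_stability} only yields
\begin{equation*}
	\bar{\mathcal{A}}_{\bh}(\bv_{\bh},\bv_{\bh}) \ge |\bv_{\bh}|^2_{\bV} + \beta \min\{\kappa_P^2,\kappa_S^2\}\,\|\bv_{\bh}\|^2_{\bL^2(\Omega)} - (1-\alpha)\,|\bv_{\bh}|^2_{\bH^1(\Omega)},
\end{equation*}
where $\alpha,\beta$ are the hidden lower constants in \eqref{stability}, \eqref{m_stability}, in general smaller than $1$. The deficit $-(1-\alpha)|\bv_{\bh}|^2_{\bH^1(\Omega)}$ cannot be absorbed, because the ratio $|\bv_{\bh}|_{\bH^1(\Omega)}/\|\bv_{\bh}\|_{\bV}$ is not uniformly bounded on $\mathbf{V}^{\bk}_{\bh}$: interpolants of $\bL^2$-normalized conjugate-harmonic pairs $(\mathrm{Re}\, z^n, -\mathrm{Im}\, z^n)$, $z=x_1+\mathrm{i}x_2$, satisfy $\nabla \phi^P+\bccurl \phi^S=0$ in the limit while their $\bH^1$-seminorm grows like $n$. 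So closing this step needs either a hypothesis on the stabilization of the type $S^E \ge a^E$ (and $|E|\,S^E \ge m^E$) on the non-polynomial part, forcing $\alpha=\beta=1$, or a genuinely different argument. Your final paragraph correctly diagnoses this as the crux, but flagging it is not proving it. For comparison, the paper itself disposes of the step by assertion, claiming that \eqref{stability} and \eqref{m_stability} ``entail'' ellipticity of $\bar{\mathcal{A}}_{\bh}$ and citing \cite[Lemma 4.6]{DesiderioFallettaFerrariScuderi20221} and \cite[Theorem 4.1]{DesiderioFallettaFerrariScuderi2022}, where the bilinear forms lack the sign-indefinite boundary coupling that creates the difficulty here; your instinct about where the real work lies is therefore well-founded, but as a proof your attempt is incomplete precisely at that point.
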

\begin{proof}
The proof is similar to those of \cite[Lemma 4.6]{DesiderioFallettaFerrariScuderi20221} and \cite[Theorem 4.1]{DesiderioFallettaFerrariScuderi2022}. In particular, existence and uniqueness of $\bq_{\bh} \in \mathbf{V}_{\bh}^{\bk}$, solution of \eqref{pbdual}, follow from \eqref{stability} and \eqref{m_stability}, which entail the ellipticity and continuity of $\bar{\mathcal{A}}_{\bh}$ in $\bV$ and in $\mathbf{V}_{\bh}^{\bk}$. Moreover, the first of \eqref{prop} holds according to the continuity of the bilinear form $\bar{\mathcal{A}}$ in $\bV$. In order to prove the second of \eqref{prop}, we use a duality argument. We consider $\widetilde{\bw} = \bq_{\bh} - \bq \in \bL^{2}(\Omega)$ and we set $\bw = \bar{\mathcal{A}}^{-1} \widetilde{\bw} \in \bH^{1+\varepsilon}(\Omega)$. Then, for all $\bz \in \bV$, we have
\begin{equation} \label{rel_imp}
	\bar{\mathcal{A}}(\bw,\bz) = \bar{\mathcal{A}}(\bar{\mathcal{A}}^{-1}\widetilde{\bw},\bz) = (\bar{\mathcal{A}}\bar{\mathcal{A}}^{-1}\widetilde{\bw})(\bz) = (\bq_{\bh} - \bq,\bz)_{\bL^2(\Omega)}.
\end{equation}
From the continuity of $\bar{\mathcal{A}}^{-1}$, we obtain
\begin{equation} \label{regu}
	\| \bw \|_{\bH^{1+\varepsilon}(\Omega)} \apprle \| \bq_{\bh} - \bq \|_{\bL^2(\Omega)}.
\end{equation}
Therefore, by choosing $\bz = \bq_{\bh} - \bq$ in \eqref{rel_imp}, we can write
\begin{align*}
	\| \bq_{\bh} - \bq \|_{\bL^2(\Omega)}^2 & = \bar{\mathcal{A}}(\bw,\bq_{\bh}-\bq) = \bar{\mathcal{A}}(\bw - I_{\bh}(\bw),\bq_{\bh}-\bq) + \bar{\mathcal{A}}(I_{\bh}(\bw),\bq_{\bh}) - \bar{\mathcal{A}}(I_{\bh}(\bw),\bq),
\end{align*}
where $I_{\bh} : \bH^{1+\eps}(\Omega) \to \mathbf{V}_{\bh}^{\bk}$ is the interpolation operator.

Since $\bq_{\bh}$ is the solution of \eqref{pbdual}, we estimate the previous identity as follows:
\begin{align}
	\| \bq_{\bh} - \bq \|_{\bL^2(\Omega)}^2 & \le \abs*{\bar{\mathcal{A}}(\bw - I_{\bh}(\bw),\bq_{\bh}-\bq)} + \abs*{\bar{\mathcal{A}}(I_{\bh}(\bw),\bq_{\bh}) - \bar{\mathcal{A}}_{\bh}(I_{\bh}(\bw),\bq_{\bh})} =: I + II. \label{0}
\end{align}
From the continuity of $\bar{\mathcal{A}}$, \eqref{interp_VEM}, the first of \eqref{prop} and \eqref{regu}, we have:
\begin{align} \nonumber
	I & \apprle \| \bw - I_{\bh}(\bw) \|_{\bV} \| \bq_{\bh} - \bq \|_{\bV} \apprle (h_P^{\varepsilon} + h_S^{\varepsilon}) \| \bw \|_{\bH^{1+\eps}(\Omega)} \| \bq \|_{\bV} 
	\\  & \apprle (h_P^{\varepsilon} + h_S^{\varepsilon}) \| \ \bq_{\bh} - \bq \|_{\bL^{2}(\Omega)} \| \bq \|_{\bV}. \label{I}
\end{align}
Moreover, using the continuity of $\bar{\mathcal{A}}$ and $\mathcal{A}$, the consistency properties \eqref{consistency} and  \eqref{m_consistency}, together with estimates \eqref{discrVH}, \eqref{interp_VEM}, \eqref{interp_property_Pi_0_nabla}, \eqref{interp_property_Pi_0_DUpont}, the first of \eqref{prop} and \eqref{regu}, we have: 
\begin{align} 
	II & \le \abs*{\bar{\mathcal{A}}(I_{\bh}(\bw) - \Pi_{\bk}^{0}(\bw),\bq_{\bh})} + \abs*{\bar{\mathcal{A}}(\Pi_{\bk}^{0}(\bw),\bq_{\bh}) - \bar{\mathcal{A}}_{\bh}(\Pi_{\bk}^{0}(\bw),\bq_{\bh})} + \abs*{\bar{\mathcal{A}}_{\bh}(\Pi_{\bk}^{0}(\bw) - I_{\bh}(\bw),\bq_{\bh})} \nonumber
	\\ & \apprle \| I_{\bh}(\bw) - \Pi_{\bk}^0(\bw) \|_{\mathbf{V}(\mathcal{T}_{\bh})} \| \bq_{\bh} \|_{\bV} \nonumber
	\\ & \apprle  \Big( \| I_{\bh}(\bw) - \bw \|_{\bV} + \| \bw - \Pi_{\bk}^0(\bw) \|_{\mathbf{V}(\mathcal{T}_{\bh})} \Big) \| \bq_{\bh} \|_{\bV} \nonumber
	\\ & \apprle (h_P^\eps + h_S^\eps) \| \bw \|_{\bH^{1+\eps}(\Omega)} \| \bq_{\bh} \|_{\bV} \nonumber
	\\ & \apprle (h_P^\eps + h_S^\eps) \| \bq_{\bh} - \bq \|_{\bL^2(\Omega)} \| \bq \|_{\bV}. \label{II}
\end{align}

Finally, we obtain the second of \eqref{prop} combining \eqref{0}, \eqref{I} and \eqref{II}.
\end{proof}
In the following theorem we show the validity of the inf-sup condition for the discrete bilinear form $\mathcal{A}_{\bh}$.
\begin{Theorem}\label{th:infsup}
For $h_P$ and $h_S$ small enough, it holds
\begin{equation*}
	\sup_{\substack{\bq_{\bh} \in \mathbf{V}_{\bh}^{\bk} \\ \bq_{\bh}\ne \boldsymbol{0}}} \frac{\mathcal{A}_{\bh}(\bw_{\bh} , \bq_{\bh})}{\norma{\bq_{\bh}}_{\bV}} \apprge \norma{\bw_{\bh}}_{\bV} \quad \forall \, \bw_{\bh} \in \mathbf{V}_{\bh}^{\bk}.
\end{equation*}
\end{Theorem}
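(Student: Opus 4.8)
The plan is to establish the discrete inf-sup condition by mimicking at the discrete level the T-coercivity argument used for the continuous problem in Proposition \ref{Prop:main_result}, combined with the duality/approximation machinery of Lemma \ref{lemma222}. The key observation is that $\mathcal{A}_{\bh} = \mathcal{B}_{\bh} - \mathcal{K}_{\bh}$ differs from the elliptic operator $\bar{\mathcal{A}}_{\bh} = \mathcal{B}_{\bh} + \mathcal{K}_{\bh}$ only by the sign of the (compact, lower-order) mass term $\mathcal{K}_{\bh}$. Thus I would write $\mathcal{A}_{\bh}(\bw_{\bh},\bq_{\bh}) = \bar{\mathcal{A}}_{\bh}(\bw_{\bh},\bq_{\bh}) - 2\mathcal{K}_{\bh}(\bw_{\bh},\bq_{\bh})$ and exploit the fact that $\bar{\mathcal{A}}_{\bh}$ is uniformly elliptic and continuous in the $\bV$-norm, so that the discrete solution operator associated with $\bar{\mathcal{A}}_{\bh}$ furnished by Lemma \ref{lemma222} behaves like a well-conditioned isomorphism of $\mathbf{V}_{\bh}^{\bk}$.

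First I would construct, for a given $\bw_{\bh} \in \mathbf{V}_{\bh}^{\bk}$, a suitable discrete test function $\bq_{\bh}$ playing the role of $\bT \bw_{\bh}$ from the continuous analysis. Concretely, I would define $\bq_{\bh}$ via the discrete projector $\bP_{\bh}$ (the VEM analogue of $\bP$, obtained by solving the discrete Dirichlet–Poisson problems), setting $\bT_{\bh} = 2\bP_{\bh} - \bI$ and taking $\bq_{\bh} = \bT_{\bh}\bw_{\bh}$, or else use the solution of the discrete dual problem \eqref{pbdual} with right-hand side $\bw_{\bh}$. Evaluating $\mathcal{A}_{\bh}(\bw_{\bh}, \bT_{\bh}\bw_{\bh})$ and regrouping terms exactly as in the continuous computation splits it into a coercive part bounded below by $\norma{\bw_{\bh}}_{\bV}^2$ plus a remainder involving $\mathcal{K}_{\bh}$ evaluated on $\bP_{\bh}\bw_{\bh}$. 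The decisive point is that $\bP_{\bh}$ maps $\bV$ compactly into $\bL^2(\Omega)$ with the same $h^\varepsilon$-rate as in Lemma \ref{lemma222}, so that the mass-term remainder is controlled by $(h_P^\varepsilon + h_S^\varepsilon)\norma{\bw_{\bh}}_{\bV}^2$ rather than by the full $\bV$-norm.

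Next I would absorb this small remainder: for $h_P, h_S$ small enough, the factor $(h_P^\varepsilon + h_S^\varepsilon)$ can be made smaller than the coercivity constant of the leading term, so the estimate $\mathcal{A}_{\bh}(\bw_{\bh}, \bq_{\bh}) \apprge \norma{\bw_{\bh}}_{\bV}^2$ survives. Since by the first bound in \eqref{prop} (or by stability of $\bT_{\bh}$) one has $\norma{\bq_{\bh}}_{\bV} \apprle \norma{\bw_{\bh}}_{\bV}$, dividing through yields
\begin{equation*}
	\sup_{\bq_{\bh} \ne \boldsymbol{0}} \frac{\mathcal{A}_{\bh}(\bw_{\bh},\bq_{\bh})}{\norma{\bq_{\bh}}_{\bV}} \ge \frac{\mathcal{A}_{\bh}(\bw_{\bh},\bT_{\bh}\bw_{\bh})}{\norma{\bT_{\bh}\bw_{\bh}}_{\bV}} \apprge \norma{\bw_{\bh}}_{\bV},
\end{equation*}
which is the claimed inf-sup inequality.

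The main obstacle I anticipate is controlling the cross terms coming from $\mathcal{K}_{\bh}$ by the genuinely small quantity $(h_P^\varepsilon + h_S^\varepsilon)$ rather than by an $O(1)$ constant, since the naive bound only gives $\apprle \norma{\bw_{\bh}}_{\bL^2(\Omega)}^2 \le \norma{\bw_{\bh}}_{\bV}^2$, which is too weak to absorb. Overcoming this requires the improved $\bL^2$-estimate of Lemma \ref{lemma222}, i.e. the discrete regularity/Aubin–Nitsche-type gain of order $h^\varepsilon$ for the difference between a $\bV$-function and its discrete $\bar{\mathcal{A}}_{\bh}$-projection; the argument hinges on reproducing that duality gain for the relevant pieces of $\mathcal{K}_{\bh}(\bw_{\bh},\bP_{\bh}\bw_{\bh})$. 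A secondary technical point is verifying that the discrete projector $\bP_{\bh}$ inherits the commutation identities \eqref{divcurlP} (or suitable approximate analogues with consistency errors that are themselves $O(h^\varepsilon)$), so that the algebraic regrouping faithfully mirrors the continuous T-coercivity computation; the VEM consistency properties \eqref{consistency} and \eqref{m_consistency} are what make this transfer legitimate.
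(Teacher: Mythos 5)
Your primary route has a genuine gap: the ``decisive point'' is false. Compactness of $\bP$ (or of a discrete analogue $\bP_{\bh}$) as a map into $\bL^2(\Omega)$ does not make $\| \bP_{\bh}\bw_{\bh} \|_{\bL^2(\Omega)}$ small --- it is generically of the same order as $\| \bw_{\bh} \|_{\bL^2(\Omega)}$, i.e.\ $O(1)$ relative to $\| \bw_{\bh} \|_{\bV}$. The $h^{\eps}$ gain in Lemma \ref{lemma222} is an approximation-error estimate for the \emph{difference} $\bq - \bq_{\bh}$ between a continuous solution and its discrete $\bar{\mathcal{A}}_{\bh}$-lifting; it says nothing about the size of $\mathcal{K}_{\bh}(\bw_{\bh},\bP_{\bh}\bw_{\bh})$ for an arbitrary discrete function $\bw_{\bh}$. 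Indeed, take $\bw_{\bh}$ to be the interpolant of a fixed smooth field $\bw$ with $\bP\bw \ne 0$: then $\mathcal{K}_{\bh}(\bw_{\bh},\bP_{\bh}\bw_{\bh}) \to \mathcal{K}(\bw,\bP\bw) \ne 0$ as $h_P,h_S \to 0$, so no bound of the form $(h_P^{\eps}+h_S^{\eps})\| \bw_{\bh} \|_{\bV}^2$ can hold, and the remainder cannot be absorbed by taking the meshes fine. This is consistent with the continuous analysis itself: in Proposition \ref{Prop:main_result} the compact term $\widetilde{\mathcal{K}}$ is \emph{never} absorbed into the coercive part; it is handled by the Fredholm alternative together with the injectivity (non-resonance) assumption. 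A telltale sign of the gap is that your argument nowhere uses the invertibility of $\mathcal{A}$, i.e.\ the assumption that $-\kappa_P^2$ and $-\kappa_S^2$ are not eigenvalues: if the mass term could really be absorbed for small $h$, you would obtain uniform discrete stability even at resonance, where the statement is false (the interpolant of a nontrivial kernel element of $\mathcal{A}$ violates the inf-sup inequality as $h_P,h_S \to 0$).

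The paper's proof is instead a Schatz-type duality argument, which is the direction your undeveloped alternative (``use the solution of the discrete dual problem \eqref{pbdual}'') would need to be pushed, with two essential precisions. Given $\bw_{\bh}$, one takes $\bq = {\mathcal{A}^*}^{-1}\bJ\bw_{\bh}$, the solution of the \emph{continuous adjoint} problem $\mathcal{A}(\bz,\bq) = (\bw_{\bh},\bz)_{\bV}$ for all $\bz \in \bV$ --- this is exactly where non-resonance enters, through the existence of ${\mathcal{A}^*}^{-1}$ --- and then defines $\bq_{\bh}$ as the discrete lifting of $\bq$ through the \emph{coercive} operator $\bar{\mathcal{A}}_{\bh}$ provided by Lemma \ref{lemma222}. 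Writing $\mathcal{A}_{\bh}(\bw_{\bh},\bq_{\bh}) = \mathcal{A}(\bw_{\bh},\bq) - 2\mathcal{K}_{\bh}(\bw_{\bh},\bq_{\bh}-\bq) + 2(\mathcal{K}-\mathcal{K}_{\bh})(\bw_{\bh},\bq)$, the first term equals $\| \bw_{\bh} \|_{\bV}^2$ exactly, and the remaining two terms are genuine approximation and consistency errors: the second is $O(h_P^{\eps}+h_S^{\eps})$ by the $\bL^2$ estimate in \eqref{prop}, the third is $O(h_P+h_S)$ by the polynomial consistency \eqref{m_consistency} and \eqref{interp_property_Pi_0_DUpont}. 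In other words, the indefinite $O(1)$ term is neutralized not by smallness of a compact operator but by building the continuous solution operator of the indefinite problem into the test function; only the discrepancies between continuous and discrete objects need to be small, and those are precisely the quantities Lemma \ref{lemma222} controls.
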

\begin{proof}
Let us consider $\bw_{\bh} \in \mathbf{V}_{\bh}^{\bk}$,  and $\bq \in \bV$ such that $\bq = {\mathcal{A}^*}^{-1} \bJ \bw_{\bh}$, where ${\mathcal{A}^*} : \bV \to \bV'$ is the adjoint of ${\mathcal{A}}$ and $\bJ : \bV \to \bV'$ denotes the canonical continuous map $(\bJ \bw)(\bz) = (\bw,\bz)_{\bV}$. Hence we obtain 
\begin{equation}\label{primavera} 
	\mathcal{A}(\bz,\bq) = (\bw_{\bh},\bz)_{\bV},
\end{equation}
for all $\bz \in \bV$, with $\| \bq \|_{\bV} \apprle \| \bw_{\bh} \|_{\bV}$. Appealing to Lemma \ref{lemma222}, there exists $\bq_{\bh}$ satisfying
\begin{equation*} \label{AAAA}
	\mathcal{B}_{\bh}(\bq_{\bh},\bv_{\bh}) + \mathcal{K}_{\bh}(\bq_{\bh},\bv_{\bh})  = \mathcal{B}(\bq,\bv_{\bh}) + \mathcal{K}(\bq,\bv_{\bh}) \quad \text{for all~} \bv_{\bh} \in \mathbf{V}_{\bh}^{\bk}
\end{equation*}
 such that \eqref{prop} holds for some $\eps>0$. Now,  we write 
\begin{align*}
	\mathcal{A}_{\bh}(\bw_{\bh} , \bq_{\bh}) & = \mathcal{B}_{\bh}(\bw_{\bh} , \bq_{\bh}) + \mathcal{K}_{\bh}(\bw_{\bh} , \bq_{\bh}) - 2\mathcal{K}_{\bh}(\bw_{\bh} , \bq_{\bh}) 
	\\ & = \mathcal{B}(\bw_{\bh} , \bq) + \mathcal{K}(\bw_{\bh} , \bq) - 2\mathcal{K}_{\bh}(\bw_{\bh} , \bq_{\bh}) 
	\\ & = \mathcal{B}(\bw_{\bh} , \bq) - \mathcal{K}(\bw_{\bh} , \bq) - 2\mathcal{K}_{\bh}(\bw_{\bh} , \bq_{\bh}) + 2\mathcal{K}(\bw_{\bh} , \bq)
	\\ & = \mathcal{A}(\bw_{\bh} , \bq) - 2\mathcal{K}_{\bh}(\bw_{\bh} , \bq_{\bh}) + 2\mathcal{K}(\bw_{\bh} , \bq)
	\\ & = \mathcal{A}(\bw_{\bh} , \bq)- 2 \mathcal{K}_{\bh}(\bw_{\bh},\bq_{\bh} - \bq) + 2(\mathcal{K}-\mathcal{K}_{\bh})(\bw_{\bh},\bq)
	\\ & =: I + II + III.
\end{align*}
From \eqref{primavera}, it follows $I = \| \bw_{\bh} \|_{\bV}^2$ and, from the continuity of $\mathcal{K}_{\bh}$ and \eqref{prop}
\begin{equation*}
	II\gtrsim - (h_S^\eps + h_P^\eps) \| \bw_{\bh} \|_{\bV} \|  \bq \|_{\bV}.
\end{equation*}
To estimate $III$, using \eqref{m_consistency}, which implies the polynomial consistency of $\mathcal{K}_{\bh}$, and \eqref{interp_property_Pi_0_DUpont}, we deduce that
\begin{align*}
	III & \apprle \abs*{\mathcal{K}(\bw_{\bh} -\Pi_{\bk}^0 \bw_{\bh},\bq)} + \abs*{ (\mathcal{K}_{\bh}-\mathcal{K})(\Pi_{\bk}^{0} \bw_{\bh},\bq)} +  \abs*{ \mathcal{K}_{\bh}(\bw_{\bh}-\Pi_{\bk}^0 \bw_{\bh},\bq)}
	\\ & \apprle \| \bw_{\bh} - \Pi_{\bk}^0 \bw_{\bh} \|_{\bL^2(\Omega)} \| \bq \|_{\bL^2(\Omega)} \apprle (h_P+h_S) \| \bw_{\bh} \|_{\bV} \| \bq \|_{\bV}.
\end{align*}
Combining the estimates for $I$, $II$ and $III$, we get
\begin{align*}
	\mathcal{A}_{\bh}(\bw_{\bh} , \bq_{\bh}) \apprge (1-h_S^\eps-h_P^\eps-h_S-h_P) \| \bw_{\bh}\|_{\bV} \| \bq \|_{\bV},
\end{align*}
from which, applying the first of \eqref{prop}, we obtain
\begin{align*}
	\frac{\mathcal{A}_{\bh}(\bw_{\bh}, \bq_{\bh})}{\| \bq_{\bh} \|_{\bV}}  \apprge \frac{\mathcal{A}_{\bh}(\bw_{\bh}, \bq_{\bh})}{\| \bq \|_{\bV}} \apprge (1-h_S^\eps-h_P^\eps-h_S-h_P) \| \bw_{\bh}\|_{\bV}.
\end{align*}
Finally, for $h_S$ and $h_P$ small enough, the assertion of the theorem is proved. 
\end{proof}
The following last lemma regards the error associated with the approximation of the source term $\bbf$.
\begin{lemma} \label{result_f}
Let suppose that the solutions $f^P$ and $f^S$ of \eqref{pbdualf} and \eqref{pbduald}, with datum $\bbf$, satisfy $f^P\in H^{s-1}(\Omega)$ and $f^S\in H^{s-1}(\Omega)$, with $s \ge 3$. Then, for all $\bv_{\bh} \in \mathbf{V}_{\bh}^{\bk}$ it holds
\begin{align*}
	& \abs*{\mathcal{L}_{\bbf,\bg}(\bv_{\bh}) - \mathcal{L}_{\bbf_{\bh},\bg}(\bv_{\bh})} \apprle \left(h_P +  h_S\right)\bigl(h_P^{\min\{s-1,k_P^*-1\}} +h_S^{\min\{s-1,k_S^*-1\}}\bigr) \| \bbf \|_{\bH^{s-2}(\Omega)} \| \bv_{\bh} \|_{\bV} , 
\end{align*}
where $k_\diamond^* = \max\{1,k_\diamond-2\}$.
\end{lemma}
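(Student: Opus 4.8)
The plan is to observe that the continuous functional $\mathcal{L}_{\bbf,\bg}$ in \eqref{eq:Lop} and its discrete counterpart $\mathcal{L}_{\bbf_{\bh},\bg}$ in \eqref{eq:Lop_disc} coincide on their boundary contributions, since for $\bv_{\bh}=(v_{h_P}^P,v_{h_S}^S)\in\mathbf{V}_{\bh}^{\bk}$ the terms $\langle g_{\bn},v_{h_P}^P\rangle_{\Gamma}$ and $\langle g_{\btau},v_{h_S}^S\rangle_{\Gamma}$ enter both forms identically. Hence the whole difference reduces to the two volume terms,
\[
\mathcal{L}_{\bbf,\bg}(\bv_{\bh})-\mathcal{L}_{\bbf_{\bh},\bg}(\bv_{\bh})=\frac{1}{\lambda+2\mu}\bigl(f^P,v_{h_P}^P-\Pi_{k_P^*}^{0}v_{h_P}^P\bigr)_{L^2(\Omega)}+\frac{1}{\mu}\bigl(f^S,v_{h_S}^S-\Pi_{k_S^*}^{0}v_{h_S}^S\bigr)_{L^2(\Omega)},
\]
so it suffices to bound each summand $\bigl(f^\diamond,v_{h_\diamond}^\diamond-\Pi_{k_\diamond^*}^{0}v_{h_\diamond}^\diamond\bigr)_{L^2(\Omega)}$, $\diamond=P,S$, and to absorb the constants $1/(\lambda+2\mu)$ and $1/\mu$ into the hidden constant.

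The crucial step is to exploit the element-wise $L^2$-orthogonality of $\Pi_{k_\diamond^*}^{0}$ recorded in \eqref{proj_Pi_0}. Since on every $E$ the remainder $v_{h_\diamond}^\diamond-\Pi_{k_\diamond^*}^{0}v_{h_\diamond}^\diamond$ is orthogonal to $\P_{k_\diamond^*}(E)$ and $\Pi_{k_\diamond^*}^{0}f^\diamond\in\P_{k_\diamond^*}(\mathcal{T}_{h_\diamond})$, I would subtract $\Pi_{k_\diamond^*}^{0}f^\diamond$ for free and write
\[
\bigl(f^\diamond,v_{h_\diamond}^\diamond-\Pi_{k_\diamond^*}^{0}v_{h_\diamond}^\diamond\bigr)_{L^2(\Omega)}=\bigl(f^\diamond-\Pi_{k_\diamond^*}^{0}f^\diamond,\ v_{h_\diamond}^\diamond-\Pi_{k_\diamond^*}^{0}v_{h_\diamond}^\diamond\bigr)_{L^2(\Omega)},
\]
followed by the Cauchy--Schwarz inequality. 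Producing a genuine product of two approximation errors is exactly what generates the extra factor $(h_P+h_S)$ in the statement; a direct bound that does not subtract $\Pi_{k_\diamond^*}^{0}f^\diamond$ would only recover a single power of the mesh size.

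I would then estimate the two factors separately. For the source factor, the polynomial approximation estimate \eqref{interp_property_Pi_0_nabla}, applied to $f^\diamond\in H^{s-1}(\Omega)$ with projection degree $k_\diamond^*$, yields a rate $h_\diamond^{\min\{s-1,k_\diamond^*+1\}}\|f^\diamond\|_{H^{s-1}(\Omega)}$, which (using $h_\diamond\le 1$, so that $h_\diamond^{\min\{s-1,k_\diamond^*+1\}}\le h_\diamond^{\min\{s-1,k_\diamond^*-1\}}$) may be written in the weaker form $\apprle h_\diamond^{\min\{s-1,k_\diamond^*-1\}}\|f^\diamond\|_{H^{s-1}(\Omega)}$ appearing in the statement. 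For the discrete factor, note that $\bv_{\bh}\in\mathbf{V}_{\bh}^{\bk}\subset\bH^1(\Omega)\hookrightarrow\bV$ by \eqref{VH1}, so the Dupont--Scott estimate \eqref{interp_property_Pi_0_DUpont}, read for the reduced degrees $\bk^*$, gives $\|v_{h_\diamond}^\diamond-\Pi_{k_\diamond^*}^{0}v_{h_\diamond}^\diamond\|_{L^2(\Omega)}\le\|\bv_{\bh}-\Pi_{\bk^*}^{0}\bv_{\bh}\|_{\bL^2(\Omega)}\apprle(h_P+h_S)\|\bv_{\bh}\|_{\bV}$; replacing the degree $\bk$ by $\bk^*$ is legitimate because $\P_{k_\diamond^*}$ still contains the constants and the first-order rate is dictated by the $\bV$-regularity, not by the polynomial degree.

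Finally I would convert $\|f^\diamond\|_{H^{s-1}(\Omega)}$ into $\|\bbf\|_{\bH^{s-2}(\Omega)}$ through the a priori estimates for the Poisson problems \eqref{pbdualf}--\eqref{pbduald} defining $f^P$ and $f^S$: these solutions gain two derivatives over the data $\ddiv\bbf$ and $\ccurl\bbf$, whence $\|f^\diamond\|_{H^{s-1}(\Omega)}\apprle\|\bbf\|_{\bH^{s-2}(\Omega)}$ under the regularity hypothesis of the lemma. Multiplying the two factor bounds, summing over $\diamond=P,S$, and inserting this regularity estimate produces the claim. I expect the main obstacle to be the joint use of the orthogonality step and the $\bV$-norm version of the Dupont--Scott estimate for the low-regularity virtual functions: the product structure $(h_P+h_S)\bigl(h_P^{\min\{s-1,k_P^*-1\}}+h_S^{\min\{s-1,k_S^*-1\}}\bigr)$ hinges on measuring the discrete factor in $\|\cdot\|_{\bV}$ rather than in a broken $\bH^1$-norm (which a virtual function does not control globally) and on checking that \eqref{interp_property_Pi_0_DUpont} survives the reduction of the polynomial degree to $\bk^*$.
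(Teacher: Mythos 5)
Your proposal is correct and follows essentially the same route as the paper's proof: cancellation of the boundary terms, insertion of $\Pi_{\bk^*}^{0}\bbf$ by the elementwise orthogonality of the $L^2$-projection, Cauchy--Schwarz, the polynomial estimate \eqref{interp_property_Pi_0_nabla} for the source factor and the Dupont--Scott bound \eqref{interp_property_Pi_0_DUpont} (read for the reduced degrees $\bk^*$) for the discrete factor, concluded by elliptic regularity to pass from $\|f^\diamond\|_{H^{s-1}(\Omega)}$ to $\|\bbf\|_{\bH^{s-2}(\Omega)}$. Your explicit justifications of the degree reduction to $\bk^*$ and of the weakened exponent $\min\{s-1,k_\diamond^*-1\}$ are points the paper leaves implicit, so nothing is missing.
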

\begin{proof}
From definitions \eqref{eq:Lop},  \eqref{eq:Lop_disc} and \eqref{proj_Pi_0}, we can estimate
\begin{align*}
	& \abs*{\mathcal{L}_{\bbf,\bg}(\bv_{\bh}) - \mathcal{L}_{\bbf_{\bh},\bg}(\bv_{\bh})} \apprle \abs{(f^P,v^P_{h_P} - \Pi_{k_P^*}^0 v^P_{h_P})_{L^2(\Omega)}} + \abs{(f^S,v^S_{h_S} - \Pi_{k_S^*}^0 v^S_{h_S})_{L^2(\Omega)}}
	\\ & \hspace{0.5cm} \apprle  \abs{(f^P - \Pi_{k_P^*}^0 f^P,v^P_{h_P} - \Pi_{k_P^*}^0 v^P_{h_P})_{L^2(\Omega)}} + \abs{(f^S-\Pi_{k_S^*}^0 f^S,v^S_{h_S} - \Pi_{k_S^*}^0 v^S_{h_S})_{L^2(\Omega)}}
	\\ & \hspace{0.5cm} \apprle \| f^P - \Pi_{k_P^*}^0 f^P \|_{L^2(\Omega)} \| v^P_{h_P} - \Pi_{k_P^*}^0 v^P_{h_P} \|_{L^2(\Omega)} + \| f^S - \Pi_{k_S^*}^0 f^S \|_{L^2(\Omega)} \| v^S_{h_S} - \Pi_{k_S^*}^0 v^S_{h_S} \|_{L^2(\Omega)}
	\\ & \hspace{0.5cm} \apprle \bigl(h_P^{\min\{s-1,k_P^*-1\}} \| f^P \|_{H^{s-1}(\Omega)} +  h_S^{\min\{s-1,k_S^*-1\}} \| f^S \|_{H^{s-1}(\Omega)}\bigr)\| \bv_{\bh} - \Pi_{\bk^*}^0 \bv_{\bh} \|_{L^2(\Omega)}
	\\ & \hspace{0.5cm} \apprle \left(h_P +  h_S\right)\bigl(h_P^{\min\{s-1,k_P^*-1\}} +h_S^{\min\{s-1,k_S^*-1\}}\bigr) \| \bbf \|_{\bH^{s-2}(\Omega)} \| \bv_{\bh} \|_{\bV}
\end{align*}
where we have denoted by $\bk^* = (\max\{s,k_P^*\},\max\{s,k_S^*\})$.
\end{proof}

From the above preliminaries, we are finally able to prove the main result.
\begin{proof}[Proof of Theorem \ref{mainresult}]
Existence and uniqueness of $\bphi_{\bh}$ follow from the discrete inf-sup condition of Theorem \ref{th:infsup}. Let $I_{\bh}(\bphi) \in \mathbf{V}_{\bh}^{\bk}$ be the interpolant of $\bphi$. By virtue of Theorem \ref{th:infsup} there exists $\bv_{\bh} \in \mathbf{V}_{\bh}^{\bk}$ such that
\begin{equation*}
	\norma{\bphi_{\bh} - I_{\bh}(\bphi)}_{\bV} \apprle \frac{\mathcal{A}_{\bh}(\bphi_{\bh} - I_{\bh}(\bphi),\bv_{\bh})}{\norma{\bv_{\bh}}_{\bV}}.
\end{equation*}
Since $\bphi$ and $\bphi_{\bh}$ are solution of \eqref{weak_Pb} and \eqref{weak_Pb_discr} respectively, we have
\begin{align*}
	& \norma{\bphi_{\bh} - I_{\bh}(\bphi)}_{\bV} \norma{\bv_{\bh}}_{\bV} \apprle \mathcal{A}_{\bh}(\bphi_{\bh} - I_{\bh}(\bphi) , \bv_{\bh})  = \mathcal{A}_{\bh}(\bphi_{\bh}, \bv_{\bh}) - \mathcal{A}_{\bh}(I_{\bh}(\bphi),\bv_{\bh}) 
	\\ & \hspace{1cm} = \mathcal{L}_{\bbf_{\bh},\bg}(\bv_{\bh}) - \mathcal{A}_{\bh}(I_{\bh}(\bphi) , \bv_{\bh}) + \bigl(\mathcal{A}(\bphi,\bv_{\bh}) - \mathcal{L}_{\bbf,\bg}(\bv_{\bh})\bigr)
	\\ & \hspace{1cm} = \bigl(\mathcal{L}_{\bbf_{\bh},\bg}(\bv_{\bh})- \mathcal{L}_{\bbf,\bg}(\bv_{\bh})\bigr)+ \mathcal{A}(\bphi - I_{\bh}(\bphi),\bv_{\bh}) + \bigl(\mathcal{A}(I_{\bh}(\bphi),\bv_{\bh}) - \mathcal{A}_{\bh}(I_{\bh}(\bphi) , \bv_{\bh})\bigr).
\end{align*}
Then, by using Lemma \ref{result_f}, the continuity of $\mathcal{A}$ and the same calculations as in \eqref{II}, we obtain
\begin{align*}
	& \norma{\bphi_{\bh} - I_{\bh}(\bphi)}_{\bV} \norma{\bv_{\bh}}_{\bV}
	\\ & \hspace{0.5cm} \apprle \left(h_P +  h_S\right)\bigl(h_P^{\min\{s-1,k_P^*-1\}} +h_S^{\min\{s-1,k_S^*-1\}}\bigr) \| \bbf \|_{\bH^{s-2}(\Omega)} \| \bv_{\bh} \|_{\bV}
	\\ & \hspace{1cm} + \norma{\bphi - I_{\bh}(\bphi)}_{\bV} \norma{\bv_{\bh}}_{\bV} + \bigl(h_P^{\min\{s,k_P\}} + h_S^{\min\{s,k_S\}}\bigr) \norma{\bphi}_{\bH^{s+1}(\Omega)} \norma{\bv_{\bh}}_{\bV},
\end{align*}
whence the thesis easily follows combining this latter with \eqref{interp_VEM} in the following  estimate
\begin{align*} 
	\norma{\bu - \bu_{\bh}}_{\bL^2(\Omega)} & = |\bphi - \bphi_{\bh}|_{\bV} \le \norma{\bphi -I_{\bh}(\bphi)}_{\bV} + \norma{\bphi_{\bh} - I_{\bh}(\bphi)}_{\bV}.
\end{align*}
\end{proof}
\section{Algebraic Formulation} \label{sec3}
In this section we briefly describe the construction of the final linear system associated with the numerical scheme \eqref{weak_Pb_discr}.
We denote by $\left\{\Phi_{j}^\diamond\right\}_{j\in\Stot_\diamond}$, with $\diamond = P,S$, the basis functions of the discrete VEM spaces $Q^{k_\diamond}_{h_\diamond}$, $\Stot_\diamond$ being the index sets related to the associated degrees of freedom. 

We re-order and split $\Stot_\diamond=\SB_\diamond\cup\Sint_\diamond$, where $\SB_\diamond$ and $\Sint_\diamond$ denote the sets of the indices related to the degrees of freedom lying on $\Gamma$ and in the interior, respectively. We then expand each component of the unknown function $\bphi_{\bh} = (\phi_{h_P}^P,\phi_{h_S}^S) \in Q_{h_P}^{k_P} \times Q_{h_S}^{k_S}$ as
\begin{equation}\label{interpolants}
	\begin{aligned}
		&\phi_{h_\diamond}^\diamond(\mathbf{x})=\sum\limits_{j\in\Stot_\diamond}\phi_{h_\diamond}^{\diamond,j}\Phi^\diamond_{j}(\mathbf{x}) \quad \text{with} \quad \phi_{h_\diamond}^{\diamond,j}=\text{dof}_{j}\left(\phi_{h_\diamond}^{\diamond}\right).
	\end{aligned}
\end{equation}
Hence, using the basis functions of $Q^{k_\diamond}_{h_\diamond}$ to test the discrete counterpart of our model problem, we get
\begin{align*} \label{VEM_discrete_equation}
	& \sum\limits_{j\in\Stot_P}\phi_{h_P}^{P,j}a_{P}(\Phi^P_{j},\Phi^P_{i}) - \kappa_P^2 \sum\limits_{j\in\Stot_P}\phi_{h_P}^{P,j} m_{P}(\Phi^P_{j},\Phi^P_{i})-\sum\limits_{j\in\SB_S}\phi_{h_S}^{S,j}\left\langle \frac{\partial {\Phi^S_{j}}_{|_\Gamma}}{\partial \btau},{\Phi^P_{i}}_{|_\Gamma}\right\rangle_{\Gamma} 
	\\ & \hspace{5.5cm}= \frac{1}{\lambda+2\mu}\mathcal{F}_{h_P}(\Phi^P_i) +\left\langle g_{\mathbf{n}},{\Phi^P_{i}}_{|_\Gamma}\right\rangle_{\Gamma}, \quad i\in\Stot_P
	\\ & \sum\limits_{j\in\Stot_S}\phi_{h_S}^{S,j}a_{S}(\Phi^S_{j},\Phi^S_{i}) - \kappa_S^2 \sum\limits_{j\in\Stot_S}\phi_{h_S}^{S,j}m_{S}(\Phi^S_{j},\Phi^S_{i})+\sum\limits_{j\in\SB_P}\phi_{h_{P}^{P},j}\left\langle \frac{\partial {\Phi^P_{j}}_{|_\Gamma}}{\partial \btau},{\Phi^S_{i}}_{|_\Gamma}\right\rangle_{\Gamma}
	\\ & \hspace{5.5cm}= \frac{1}{\mu}\mathcal{F}_{h_S}(\Phi^S_i) + \left\langle {g}_{\btau},{\Phi^S_{i}}_{|_\Gamma}\right\rangle_{\Gamma}, \quad i\in\Stot_S,
\end{align*}
where we have set  $\mathcal{F}_{h_\diamond}(\Phi^\diamond_i)
 = \left (f^\diamond, \Pi_{k_\diamond^*}^0 \Phi^\diamond_i\right)_{L^2(\Omega)}$ (see formula \eqref{eq:Lop_disc}).
To write the matrix form of the above linear system, we introduce the stiffness matrices $\mathbb{A}^\diamond$, the mass matrices $\mathbb{M}^\diamond$, the matrices $\mathbb{Q}^\diamond$ and $\mathbb{B}^{PS}, \mathbb{B}^{SP}$ whose entries are defined by
\begin{align*}
	&\mathbb{A}^\diamond_{ij}  =  a_\diamond(\Phi_j^\diamond,\Phi_i^\diamond), \hspace{1cm} \mathbb{M}^\diamond_{ij} =  m_\diamond(\Phi^\diamond_{j},\Phi^\diamond_{i}), & i,j\in\Stot_\diamond \\  &\mathbb{Q}^\diamond_{ij}  =  \left\langle {\Phi^\diamond_{j}}_{|_\Gamma} ,{\Phi^\diamond_{i}}_{|_\Gamma}\right\rangle_{\Gamma}, & i\in\Stot_\diamond, j\in\SB_\diamond \\ &\mathbb{B}^{PS}_{ij}  =  \left\langle \frac{\partial {\Phi^S_{j}}_{|_\Gamma}}{\partial \btau},{\Phi^P_{i}}_{|_\Gamma}\right\rangle_{\Gamma}, & i\in\Stot_P, j\in\SB_S\\ &\mathbb{B}^{SP}_{ij}  =  \left\langle \frac{\partial {\Phi^P_{j}}_{|_\Gamma}}{\partial \btau},{\Phi^S_{i}}_{|_\Gamma}\right\rangle_{\Gamma}, & i\in\Stot_S, j\in\SB_P
\end{align*}
and the right hand side vectors
\begin{align*}
	& \bbf^\diamond = \left[\mathcal{F}_{h_\diamond}(\Phi^\diamond_i)\right]_{i\in\Stot_\diamond}, \hspace{1cm} \bg^P_{\bn} = [g_{\bn}^{P,j}]_{j\in\Stot_P}, \hspace{1cm} \bg^S_{\btau} = \left[g_{\btau}^{S,j}\right]_{j\in\Stot_S}.
\end{align*}
In accordance with the splitting of the set of the degrees of freedom, we consider the block partitioned representation of the above matrices and vectors (with obvious meaning of the notation), and we write the linear system as follows:
\begin{align*}
	\begin{bmatrix}
		\mathbb{A}^{P}_{\Gamma \Gamma}-\kappa_P^2 \mathbb{M}^{P}_{\Gamma \Gamma} & \mathbb{A}^P_{\Gamma \text{I}}- \kappa_P^2 \mathbb{M}^{P}_{\Gamma \text{I}} & -\mathbb{B}^{PS} & \mathbb{O}
		\\
		&&&
		\\ \mathbb{A}^P_{\text{I} \Gamma} - \kappa_P^2 \mathbb{M}^{P}_{\text{I} \Gamma} & \mathbb{A}^P_{\text{I} \text{I}} - \kappa_P^2 \mathbb{M}^{P}_{\text{I} \text{I}} & \mathbb{O} & \mathbb{O}
		\\
		&&&
		\\ \mathbb{B}^{SP} & \mathbb{O} & \mathbb{A}^S_{\Gamma  \Gamma} -\kappa_S^2 \mathbb{M}^S_{\Gamma  \Gamma} & \mathbb{A}^S_{\Gamma \text{I}} - \kappa_S^2 \mathbb{M}^S_{\Gamma  \text{I}}
		\\
		&&&
		 \\ \mathbb{O} &  \mathbb{O} & \mathbb{A}^S_{\text{I} \Gamma} - \kappa_S^2 \mathbb{M}^S_{\text{I} \Gamma} & \mathbb{A}^S_{\text{I} \text{I}} - \kappa_S^2 \mathbb{M}^S_{\text{I} \text{I}}
	\end{bmatrix}
	\begin{bmatrix}
		\boldsymbol{\varphi}^{P}_{\Gamma}
		\\
		\\ \boldsymbol{\varphi}^{P}_{\text{I}}
		\\
		\\ \boldsymbol{\varphi}^{S}_{\Gamma}
		\\
		\\ \boldsymbol{\varphi}^{S}_{\text{I}}
	\end{bmatrix} = 
	\begin{bmatrix}
		\frac{1}{\lambda+2\mu} \mathbf{f}^{P}_{\Gamma} + (\mathbb{Q}^P)^T \mathbf{g}^P_{\mathbf{n}}
		\\ 
		\\
		\frac{1}{\lambda+2\mu} \mathbf{f}^{P}_{\text{I}}
		\\ 
		\\
		\frac{1}{\mu} \mathbf{f}^{S}_{\Gamma}	 + (\mathbb{Q}^S)^T \mathbf{g}^S_{\boldsymbol{\tau}}
		\\ 
		\\
		\frac{1}{\mu} \mathbf{f}^{S}_{\text{I}}
	\end{bmatrix}
\end{align*}
in the unknown vectors
$\displaystyle{
\bphi^\diamond = \left[\phi^{\diamond,j}_{h_\diamond}\right]_{j\in\Stot_\diamond}}
$.

It is worth highlighting that the two solutions $\bphi^P$ and $\bphi^S$ are coupled by means of the matrices $\mathbb{B}^{PS}$ and $\mathbb{B}^{SP}$, whose integral entries are defined on non matching boundary meshes. All the other matrices are associated with uncoupled scalar Helmholtz problems defined on different polygonal tessellations of the domain $\Omega$. Hence they are independently computed by means of standard scalar VEM tools (see \cite{BeiraoBrezziMariniRusso2014}).
\section{Numerical results}

In this section, we apply the proposed method to some boundary value problems to show its effectiveness and to validate the convergence estimate of Theorem \ref{mainresult}. We refer to our approach as \textit{scalar} VEM, and we compare it with the classical VEM method applied to the equation \eqref{pb_iniziale}, which we refer to as the \textit{vector} VEM. The corresponding approximate solutions are denoted by $\bu_{\bh}$ ($\bh = (h_P,h_S)$) and $\bu_h$, respectively, recalling that in the \textit{vector} case we can apply the associated VEM defined on a tessellation $\mathcal{T}_h$ with a unique choice of the mesh parameter $h$.

Both approaches have been implemented by in-house MATLAB codes. In particular, for the \textit{scalar} case, the local stiffness matrices $\mathbb{A}^P, \mathbb{A}^S$ and the mass matrices $\mathbb{M}^P, \mathbb{M}^S$ have been constructed by following the guidelines for general elliptic second order problems in \cite{BeiraoBrezziMariniRusso2014}.
For the \textit{vector} case, we have used the library VEMLab \cite{OrtizBernardinAlvarezHitschfeldKahlerRussoSilvaValenzuelaOlateSanzana}, created to solve Poisson and linear elasticity problems and available only for VEM with approximation order equal to 1. In order to solve our problems, we have properly added the contribution due to the presence of the mass term.

For the generation of the partitioning $\mathcal{T}_{h_\diamond}$ of the computational domain $\Omega$, we have used two softwares: Gmsh to construct unstructured conforming meshes consisting of quadrilaterals (see \cite{GeuzaineRemacle2009}), and the Voronoi mesher of PolyMesher (see \cite{TalischiPaulinoPereiraMenezes2012}). In Figure \ref{fig:meshes} we show some representative meshes used in the forthcoming numerical tests. 
\begin{figure}[h]
\centering
\begin{minipage}[b]{0.25\textwidth}
	\includegraphics[width=\textwidth]{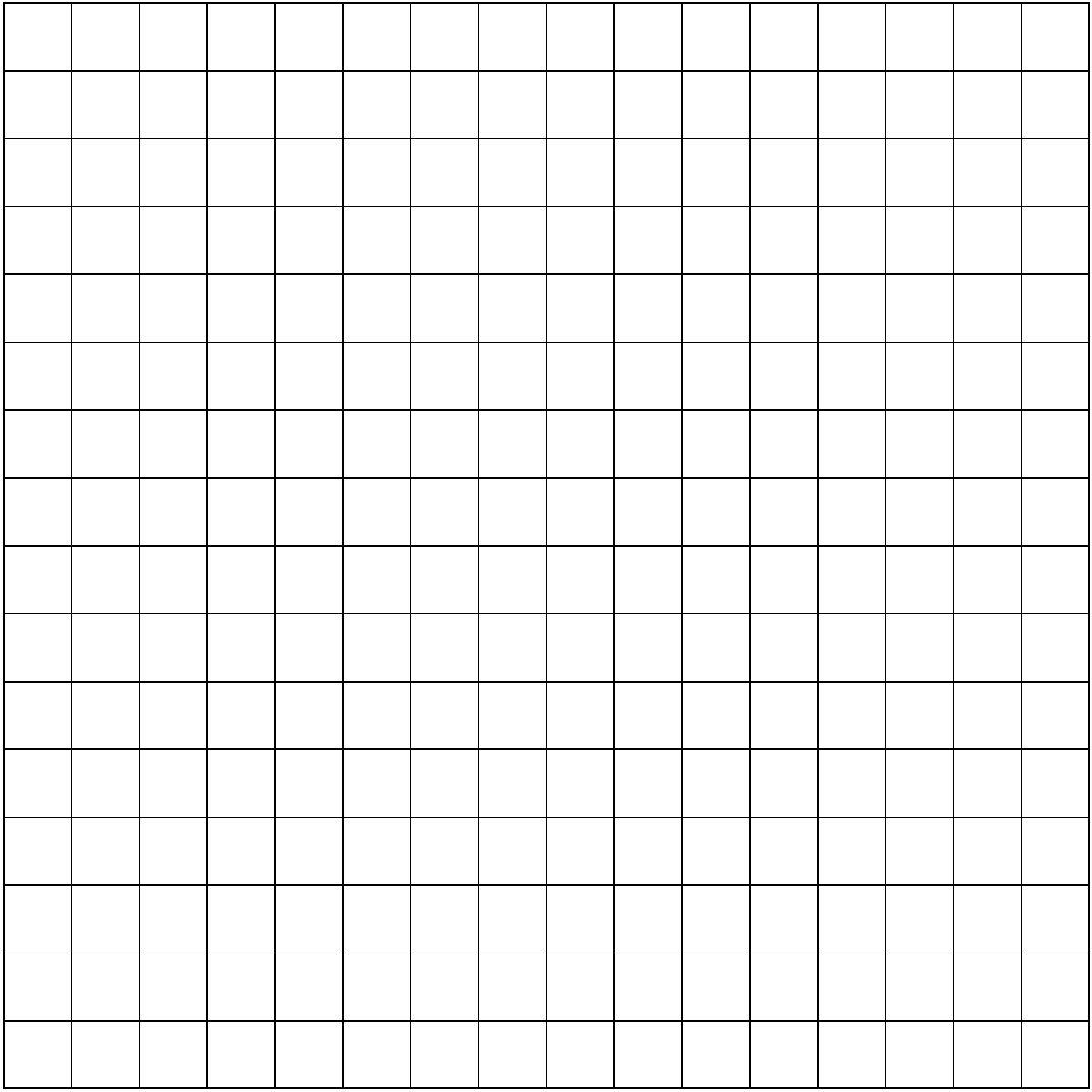}
\end{minipage}
\hspace{0.9cm}
\begin{minipage}[b]{0.25\textwidth}
	\includegraphics[width=\textwidth]{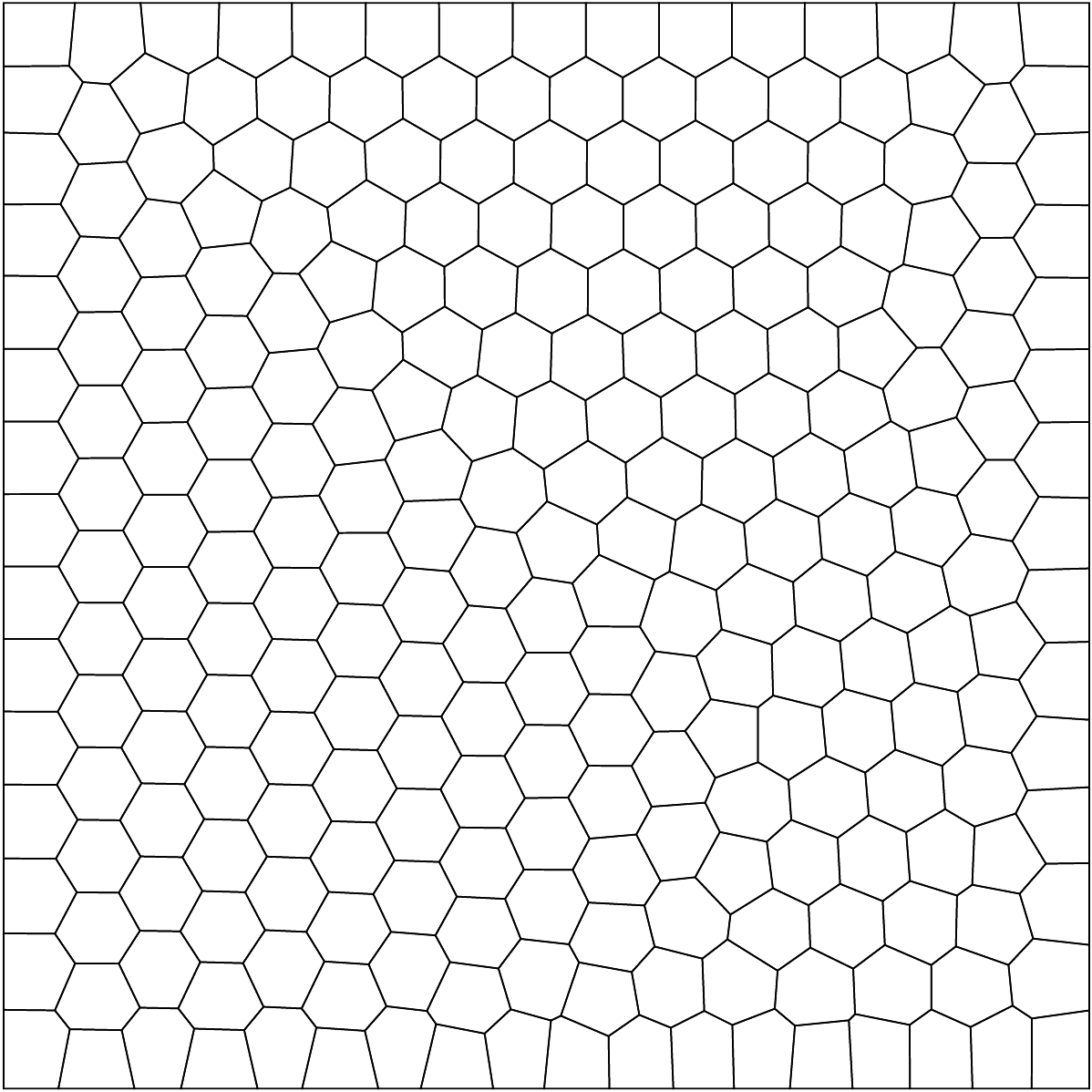}
\end{minipage}
\hspace{0.9cm}
\begin{minipage}[b]{0.25\textwidth}
	\includegraphics[width=\textwidth]{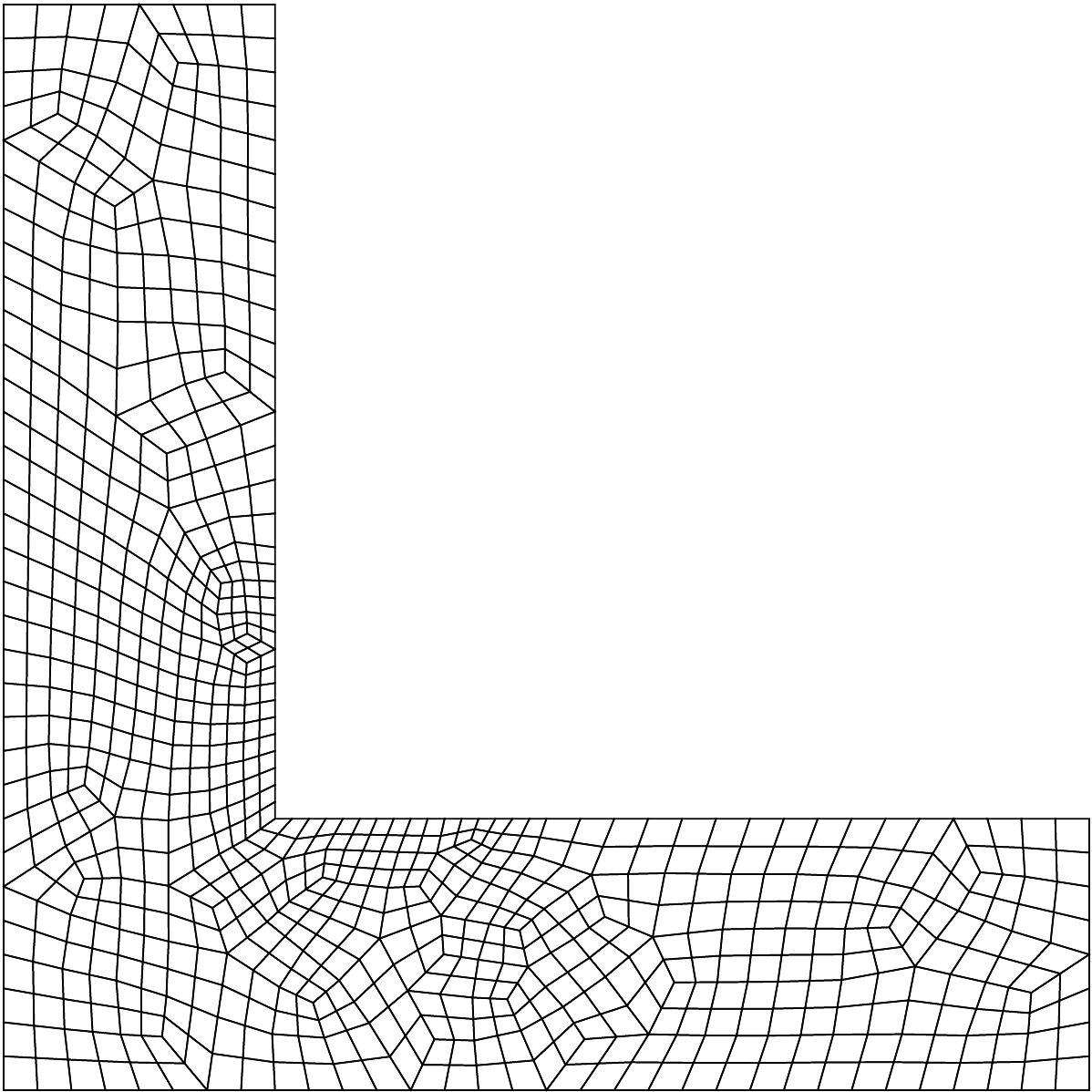}
\end{minipage}
\caption{Representative meshes of the square $[0,1]^2$ (Example 1) obtained by Gmsh (left) and PolyMesher (middle), and of a L-shaped domain (Example 3) obtained by Gmsh (right).}  \label{fig:meshes}
\end{figure}

We point out that, once the approximate solution $\bphi_{\bh} = (\phi_{h_P}^P, \phi_{h_S}^S) \in \mathbf{V}_{\bh}^{\bk}$ of Problem \eqref{weak_Pb_discr} is computed, the displacement field $\bu_{\bh}$ of the original  problem must be reconstructed by means of the relation ${\bu_{\bh}} = 
\nabla \bphi_{\bh} + \bccurl\bphi_{\bh}$,
 which involves the calculation of the partial derivatives of the numerical solutions $\phi_{h_P}^P$ and $\phi_{h_S}^S$ defined in \eqref{interpolants}.
 However, since the analytic expression of these latter is not known, we construct a computable displacement field $\bu_{\bh}^c$ which, in the interior of each element $E$, is defined as follows:
\begin{equation*}\label{buh}
	{\bu_{\bh}^c}_{|_E} =  \nabla \left(\Pi_{k_P}^\nabla{\phi_{h_P}^P}_{|_E}\right) + \bcurl\left({\Pi_{k_S}^\nabla{\phi_{h_S}^S}}_{|_E}\right).
\end{equation*}
As the forthcoming numerical results will show, this formula allows us to retrieve the expected convergence rate of the $L^2$-norm error associated to  the  displacement solution.

It is worth to point out that, to guarantee an  approximation of order $k$ for the displacement field $\bu$, solution of the vector equation \eqref{pb_iniziale},  
it is necessary to approximate the two potentials $(\phi^P,\phi^S)$, solution of the scalar equations \eqref{pb_strong}, by a method of order $k+1$. Despite this aspect, the potential formulation displays the advantage of using different meshes and approximation orders for the pressure and shear potentials, thus allowing to adapt each mesh size to the corresponding wave frequency. For instance, this is crucial when large values of the ratio $\kappa_S/\kappa_P$ are considered since, in this case, it is convenient to use $h_P$ larger than $h_S$ and/or $k_P$ smaller than $k_S$.

\

\noindent \textbf{Example 1.} The purpose of this first test is twofold:  to validate the optimal convergence estimate provided by Theorem \ref{mainresult} and 
to ascertain the so called \textit{patch test}, that is to verify that the method is capable of exactly reproducing polynomial solutions.
To this aim, we deal with the boundary value problem \eqref{pb_iniziale} defined in the unit square $\Omega= (0,1)^2$, with parameters $\lambda=\mu=\rho=1$ and frequency $\kappa=1$. We consider the source term $\mathbf{f} = \nabla f^P + \bccurl f^S$ with
\begin{align*}
	f^P(x_1,x_2) = -x_1-x_2, \quad f^S(x_1,x_2) = -x_2^3 - 6x_2,
\end{align*}
the boundary datum $\bg$ such that the exact solution is 
\begin{align*}
	\bu(x_1,x_2) =  [1+3x_2^2, 1 ]^T
\end{align*}
and the associated scalar potentials are
\begin{align*}
	\phi^P(x_1,x_2) = x_1+x_2, \quad \phi^S(x_1,x_2) = x_2^3.
\end{align*}
We apply the \textit{scalar} and \textit{vector} VEM
 to compute  $\phi^P_{h_P}$, $\phi^S_{h_S}$ and $\bu_h$, approximations of the solutions of Problems \eqref{pb_strong} and \eqref{pb_iniziale}, respectively. The numerical solutions have been obtained by applying the VEM method associated with each of the two aforementioned meshes of the domain $\Omega$. 
For simplicity, we restrict the analysis to the choice $h_P = h_S = h$.

In Tables \ref{tab:prima} and \ref{tab:seconda} we report the number of degrees of freedom (dof) of the \textit{scalar} VEM with respect to the generic order $k$ and to the mesh size $h$ associated with the Gmsh and Voronoi meshes, respectively. As we can see, for a fixed mesh the increase of the dof is approximately linear, while for a fixed order, it is quadratic. Therefore, in terms of computational cost and memory saving, it is more efficient to use a high order VEM rather than fine meshes.
\begin{table}[h!]
	\centering
	\begin{tabular}{lcccccc}
		\toprule
		& $h=0.71$ & $h=0.35$ & $h=0.18$ & $h=0.09$ & $h=0.04$ & $h=0.02$ \\ 
		\toprule
		$k=1$ & $9$	& $25$  & $81$ & $289$ & $1,089$ & $4,225$  \\
		$k=2$ & $21$ & $65$  & $225$ & $833$ & 	$3,201$ & $16,641$ \\
		$k=3$ & $45$ & $153$  & $561$ & $2,145$ & $8,385$ & $33,153$  \\
		\bottomrule
	\end{tabular}
\vskip0.2cm
	\caption{Example 1. Number of the dof for the scalar VEM space of order $k$ for Gmsh mesh size $h$.}
	\label{tab:prima}
\end{table}
\begin{table}[h!]
	\centering
	\begin{tabular}{lcccccc}
		\toprule
		& $h=0.71$ & $h=0.40$ & $h=0.19$ & $h=0.10$ & $h=0.05$ & $h=0.02$ \\ 
		\toprule
		$k=1$ & $9$	& $34$  & $130$ & $514$ & $2,050$ & $8,194$  \\
		$k=2$ & $25$ & $99$  & $387$ & $1,539$ & $6,147$ & $24,579$ \\
		$k=3$ & $45$ & $180$  & $708$ & $2,820$ & $11,268$ & $45,060$  \\
		\bottomrule
	\end{tabular}
\vskip0.2cm
	\caption{Example 1. Number of the dof for the scalar VEM space of order $k$ for Voronoi mesh size $h$.}
	\label{tab:seconda}
\end{table}

In Figure \ref{fig:L2testb} we show the convergence slopes of the $L^2$-norm error, obtained by applying the \textit{scalar} and \textit {vector} VEM, associated with the Gmsh mesh (left plot) and with the Voronoi one (right plot). Some choices of the approximation orders $k_P$ and $k_S$ for the \textit{scalar} approach and $k$ for the \textit{vector} one have been selected. In particular, for the \textit{scalar} VEM we consider decoupled approximation orders $k_P$ and $k_S$, properly chosen to retrieve the expected convergence order. Precisely, we vary $k_P$ and $k_S$ up to order $2$, since for higher orders the exact solution is computed up to the machine precision, thus validating
the \textit{patch-test}. As predicted by the theory and confirmed by the figure, the smallest values which guarantee the optimal convergence order are $k_P = 1$ and $k_S = 2$.
For the \textit{vector} VEM, the expected convergence order is obtained by choosing $k = 1$, while for larger values the polynomial solution is retrieved up to the machine precision. As Figure \ref{fig:L2testb} shows, for both meshes the \textit{scalar} VEM turns out to be slightly more accurate than the vector one.

\begin{figure}[h]
\centering
\begin{minipage}[b]{0.45\textwidth}
	\includegraphics[width=\textwidth]{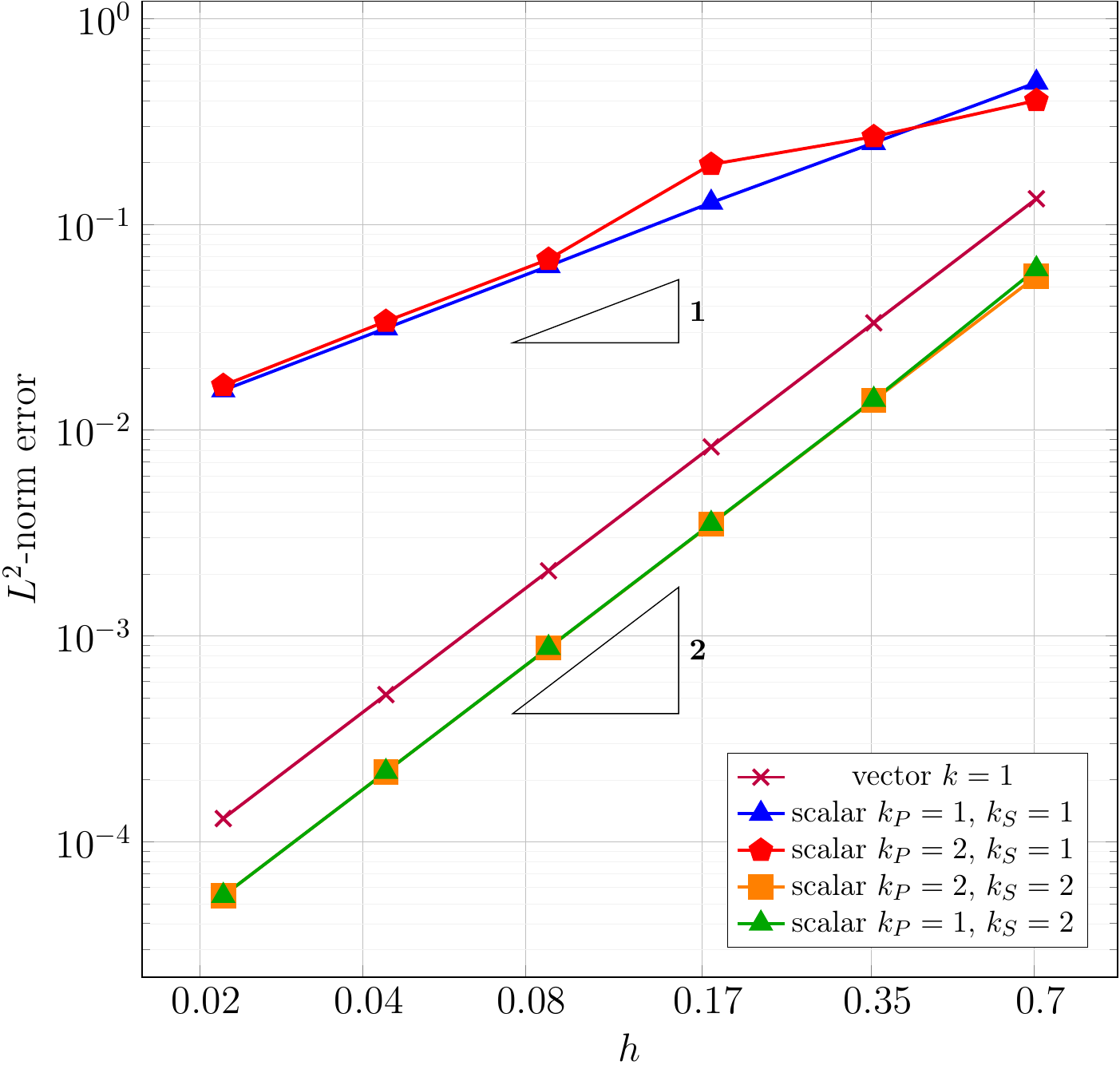}
\end{minipage}
\hfill
\begin{minipage}[b]{0.45\textwidth}
	\includegraphics[width=\textwidth]{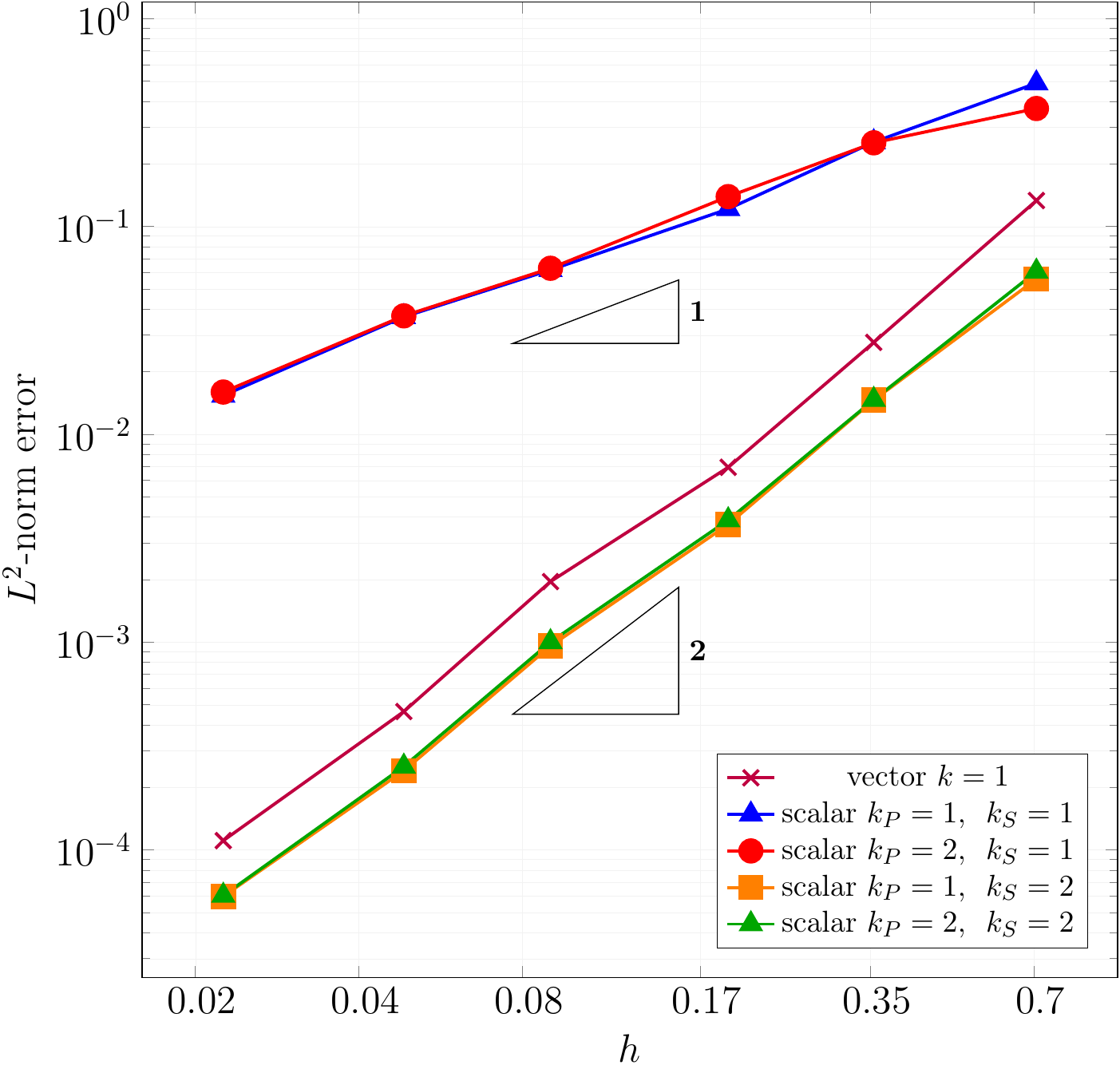}
\end{minipage}
\caption{Example 1. $L^2$-norm absolute errors for the \textit{scalar} and \textit{vector} VEM with respect to the  Gmsh (left) and Voronoi (right) mesh size $h$, by varying $k_P$ and $k_S$.}  \label{fig:L2testb}
\end{figure}

\

\noindent\textbf{Example 2.}  
The purpose of this test is to show the relevance of adapting the mesh sizes and the approximation orders of the \textit{scalar} VEM to the behaviour of the potentials $\phi^P$ and $\phi^S$, which depend on the associated wave-numbers $\kappa_P$ and $\kappa_S$. To this aim we consider Problem \eqref{pb_strong} defined in $\Omega = (0,1)^2$ and with the physical material parameters $\mu = 5.168e$+$08$ N/m$^2$, $\lambda = 1.715e$+$10$  N/m$^2$ and $\rho = 2.320e$+$03$ kg/m$^3$, which correspond to a sandstone layer. 
The frequency is $\kappa = 2.000e\text{+}04$ 1/s; the source terms $f^P$ and $f^S$
and the Dirichlet datum $\mathbf{g}$ are
\begin{align*}
 f^P(x_1,x_2) = \frac{2\cos x_1}{\lambda+2\mu},\quad f^S(x_1,x_2) = 0, \quad \mathbf{g}(x_1,x_2) = \begin{pmatrix}-\sin x_1+\cos x_2 \\ 0 \end{pmatrix}.
\end{align*}
The wave-numbers in the scalar equations are
$\kappa_P = 7.144$ 1/m, $\kappa_S = 4.238e$+$01$ 1/m (see \eqref{eq:wave_numbers}) and, hence, the associated potentials display relevant different behaviours, as shown in Figure \ref{figrefphi}, where their approximations obtained with $k_P=k_S=2$ and $h_P = h_S = 1.1$e-$02$ are plotted.

\begin{figure}[h]
\centering
\begin{minipage}[b]{0.45\textwidth}
	\includegraphics[width=\textwidth]{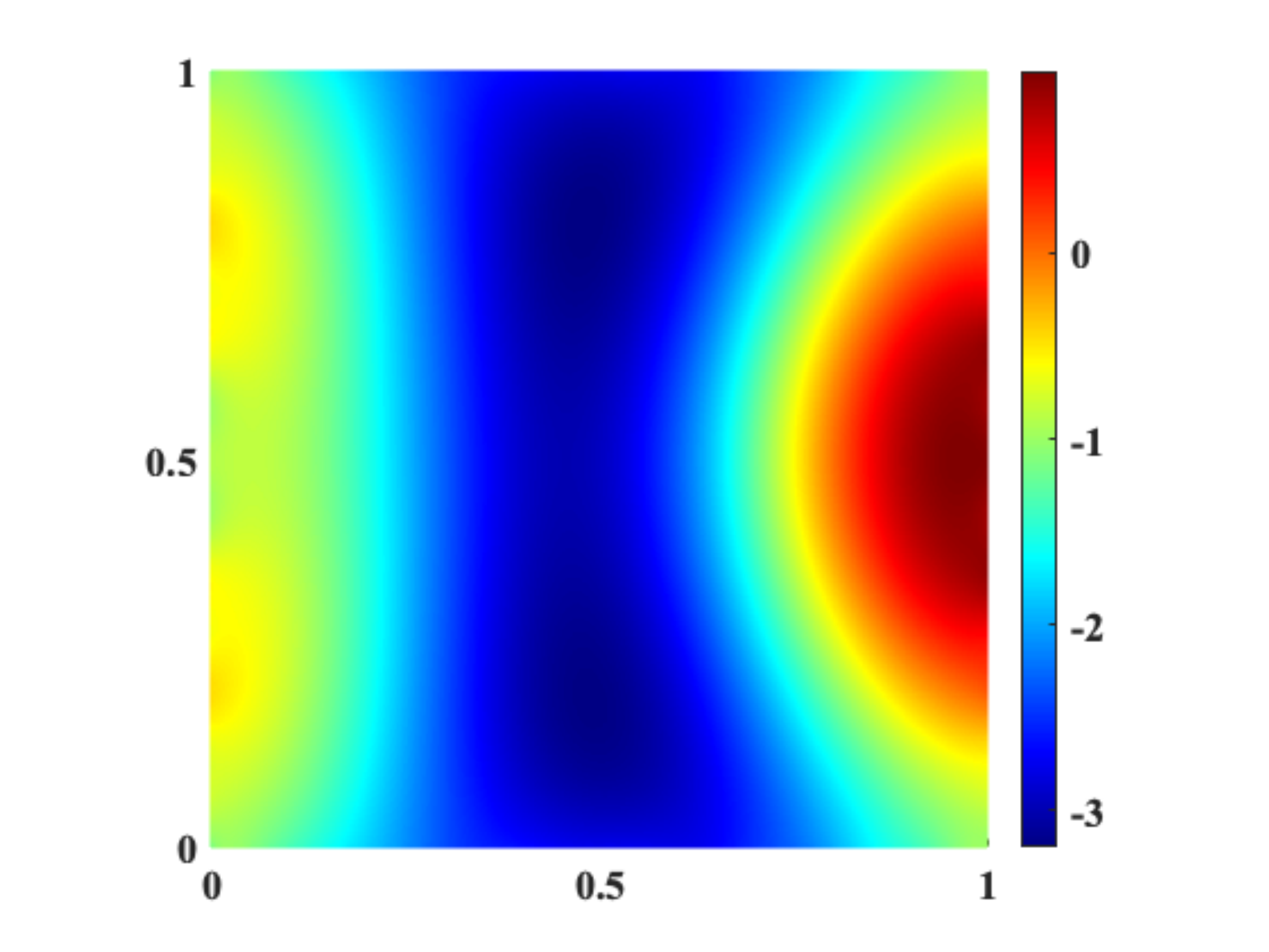}
\end{minipage}
\begin{minipage}[b]{0.45\textwidth}
	\includegraphics[width=\textwidth]{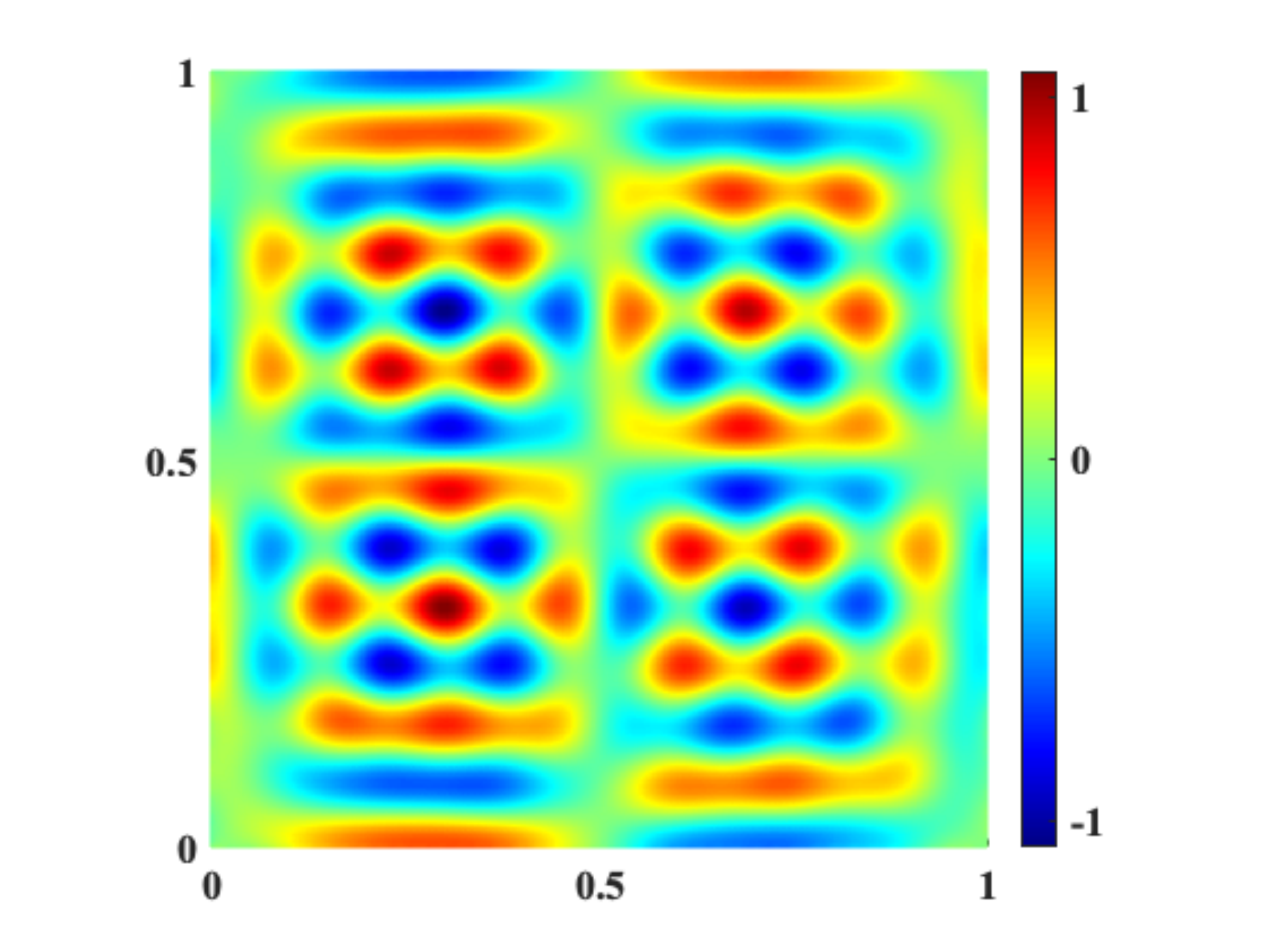}
\end{minipage}
\caption{Example 2. Behaviour of $\phi^P$ (left) and $\phi^S$ (right).}
\label{figrefphi}
\end{figure}
In Table \ref{cambio_h} we report the absolute errors of $\phi_{h_P}^P$ and $\phi_{h_S}^S$ obtained with $k_P=k_S=2$, by fixing $h_S = 1.1e-02$ and varying $h_P \in \{8.8e-02, 4.4e-02, 2.2e-02\}$. The error is calculated with respect to the approximate solution obtained with $k_P=k_S=2$ and a finer mesh sizes $\bar{h}_P=\bar{h}_S=1.1e-02$. As we can see, in terms of computational cost given by the number of dof reported in the last column, the most convenient choice is $h_P=4.4e-02$. Indeed, this mesh size allows us to obtain a satisfying accuracy for both potentials by saving about the $50\%$ of the dof. 

In Table \ref{cambio_k} we report the absolute errors of  $\phi_{h_P}^P$ and $\phi_{h_S}^S$ obtained with fixed mesh sizes $h_P=h_S=4.4e-02$, by fixing $k_S = 5$ and varying $k_P \in \{2, 3, 4\}$. The error is calculated with respect to the approximate solutions $\bar{\phi}^P_{h_P}$ and $\bar{\phi}^S_{h_S}$ obtained with $k_P=k_S=5$ and $h_P=h_S=4.4e-02$. Also in this case, to obtain a satisfying and comparable accuracy for both potentials, the most convenient approximation order is the intermediate $k_P =3$, with a dof saving of about the $30\%$. 
Further, from a comparison of the two Tables \ref{cambio_h} and \ref{cambio_k} in terms of accuracy on both solutions, we highlight that the advantageous strategy consists in decoupling the approximation orders and in using the higher one for the shear wave $\phi^S$.

\begin{table}[!h]
	\centering
			\begin{tabular}{cccc}
			\toprule
			$h_P$ & $\bigl\| \phi_{h_P}^P - \phi_{\bar{h}_P}^{P} \bigr\|_{\infty}$ & $\bigl\| \phi_{h_S}^S - \phi_{\bar{h}_S}^{S} \bigr\|_{\infty}$ & dof \vspace{0.1cm} \\ 	
			\toprule
			$8.8e-02$ & $8.3e-02$ & $4.1e-02$ & $67,138$\\
			$4.4e-02$ & $2.5e-02$ & $2.5e-02$ & $70,274$\\
			$2.2e-02$ & $2.3e-02$ & $3.2e-02$ & $82,690$\\
			$1.1e-02$ & $-$ & $-$ & $132,098$\\
			\bottomrule
		\end{tabular}
	\caption{Example 2. Absolute errors of $\phi_{h_P}^P$ and $\phi_{h_S}^S$ with $k_P=k_S=2$, by fixing $h_S = 1.1e-02$ and varying $h_P$.}
	\label{cambio_h}
\end{table}

\begin{table}[!h]
	\centering
			\begin{tabular}{cccc}
			\toprule
			$k_P$ & $\bigl\| \phi_{h_P}^P - \bar{\phi}_{h_P}^{P} \bigr\|_{\infty}$ & $\bigl\| \phi_{h_S}^S - \bar{\phi}_{h_S}^{S} \bigr\|_{\infty}$ & dof \vspace{0.1cm} \\ 	
			\toprule
			$2$ & $3.2e-02$ & $7.3e-03$ & $24,002$\\
			$3$ & $7.2e-03$ & $5.8e-03$ & $28,162$\\
			$4$ & $9.6e-03$ & $4.9e-03$ & $33,346$\\
			$5$ & $-$ & $-$ & $39,554$\\
			\bottomrule
		\end{tabular}
	\caption{Example 2. Absolute errors of $\phi_{h_P}^P$ and $\phi_{h_S}^S$ with $h_P=h_S=4.4e-02$, by fixing $k_S = 5$ and varying $k_P$.}
	\label{cambio_k}
\end{table}

\

\noindent \textbf{Example 3.} In this example we consider Problem \eqref{pb_iniziale} with a non-trivial vector source $\bbf$, for which the corresponding Helmholtz-Hodge decomposition $\bbf = \nabla f^P + \bccurl f^S$ is not analytically given. Hence we proceed numerically, by determining  $f_{h_P}^P \in Q_{h_P}^{k_P}$ and $f_{h_S}^S \in Q_{h_S}^{k_S}$, VEM approximations of $f^P$ and $f^S$ obtained by solving Problems \eqref{pbdualf} and \eqref{pbduald}, respectively. These approximations will be then used in the right hand side term of \eqref{pb_strong}. In particular, the non-homogeneous Neumann problem in the unknown $f^P$ is reformulated in terms of a standard variational formulation (see, e.g. \cite[Theorem 4.1]{Steinbach2008}) and then discretized by the VEM, as follows: find $f_{h_P}^P \in Q_{h_P}^{k_P}$ such that 
\begin{align}\label{numschemefP}
	a_P\left(f_{h_P}^P , v_{h_P}^P\right) +  \langle 1,f_{h_P}^P \rangle_\Gamma \langle 1,v_{h_P}^P \rangle_\Gamma =  -\left (\ddiv \bbf,\Pi_{k_P^*}^0 v_{h_P}^P\right)_{L^2(\Omega)}  + \langle \bbf \cdot \bn,v_{h_P}^P \rangle_\Gamma
\end{align}
for all $v_{h_P}^P \in Q_{h_P}^{k_P}$. Besides, the homogeneous Dirichlet problem in the unknown $f^S$ is reformulated and approximated as (see \cite{BeiraoBrezziCangianiManziniMariniRusso2013}): find $f_{h_S}^{S} \in Q_{h_S}^{k_S} \cap H_0^1(\Omega)$ such that 
\begin{equation} \label{numschemefS}
	a_S\left(f_{h_S}^S , v_{h_S}^S\right) =\left (\ccurl \bbf,\Pi_{k_S^*}^0 v_{h_S}^S\right)_{L^2(\Omega)}
\end{equation}
for all $v_{h_S}^S \in Q_{h_S}^{k_S} \cap H_0^1(\Omega)$.
We remark that, proceeding as in \cite{BeiraoBrezziCangianiManziniMariniRusso2013} and \cite{BrennerGuanSung2017}, it is possible to prove that if $f^\diamond$, $\diamond = P,S$,
is smooth enough, then
\begin{equation*}
	\| f^\diamond - f_{h_\diamond}^\diamond \|_{L^2(\Omega)} + h_\diamond| f^\diamond - f_{h_\diamond}^\diamond |_{H^1(\Omega)} \apprle h_\diamond^{k_\diamond+1} \| \bbf \|_{\bH^{k_\diamond+1}(\Omega)}.
\end{equation*}
Therefore, once $f_{h_\diamond}^\diamond$ 
has been retrieved, we compute the right hand side \eqref{eq:Lop_disc} as (recall $k_\diamond^* = \max\{1,k_\diamond-2\}$)
\begin{equation*}
	( f_{h_\diamond}^\diamond,  \Pi^0_{k_\diamond^*} v_{h_\diamond})_{L^2(\Omega)} = (\Pi_{k_\diamond^*}^0 f_{h_\diamond}^\diamond,  \Pi_{k_\diamond^*}^0 v_{h_\diamond})_{L^2(\Omega)}.
\end{equation*}
We will show that the numerical procedure adopted for the approximation of the right hand sides does not affect the convergence order of the global scheme.
To this aim we consider the L-shaped domain $\Omega = \Omega_1 \setminus \Omega_2$ where $\Omega_1 = (0,2) \times (0,2)$ and $\Omega_2 = [0.5,2) \times [0.5,2)$, and the parameters $\mu = 5$, $\lambda = 1$, $\rho = 10$ and $\kappa = 1$. The vector source $\bbf$, whose components are represented in Figure \ref{fig:f12}, and the Dirichlet boundary condition $\bg$ are taken  accordingly to the solution
\begin{equation} \label{ff_ex3}
	\bu(x_1,x_2) = \begin{bmatrix} e^{-100((x_1-0.25)^2 + (x_2-1.75)^2)} \\ e^{-100((x_1-1.75)^2 + (x_2-0.25)^2)} \end{bmatrix}.
\end{equation}
In Figure \ref{fig:f12PS} we show the behaviour of $f_{h_P}^P$ and $f_{h_S}^S$, approximations of $f^P$ and $f^S$ obtained by solving \eqref{numschemefP} and \eqref{numschemefS}, respectively.
\begin{figure}[h]
\centering
\begin{minipage}[b]{0.45\textwidth}
	\includegraphics[width=\textwidth]{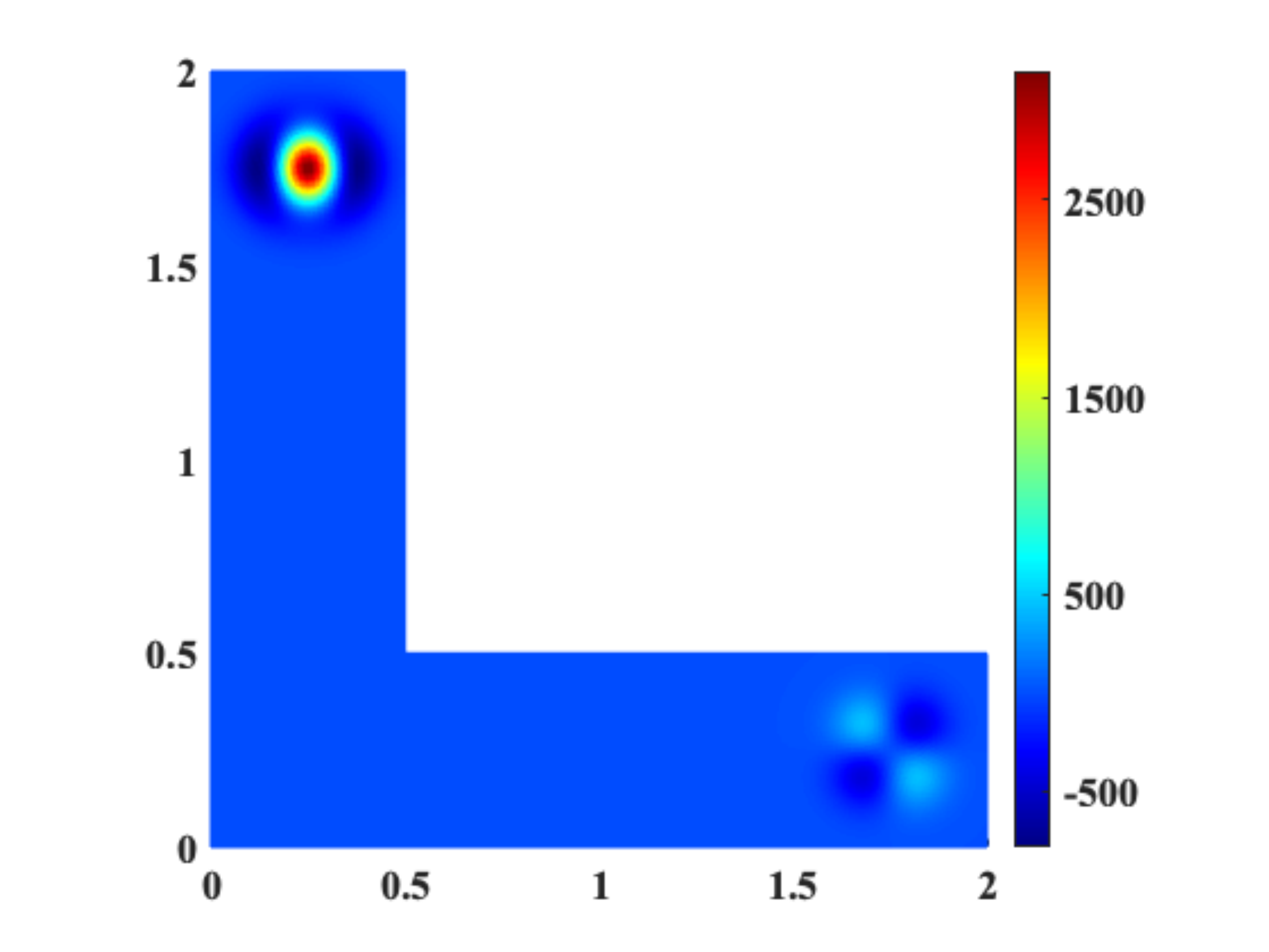}
\end{minipage}
\begin{minipage}[b]{0.45\textwidth}
	\includegraphics[width=\textwidth]{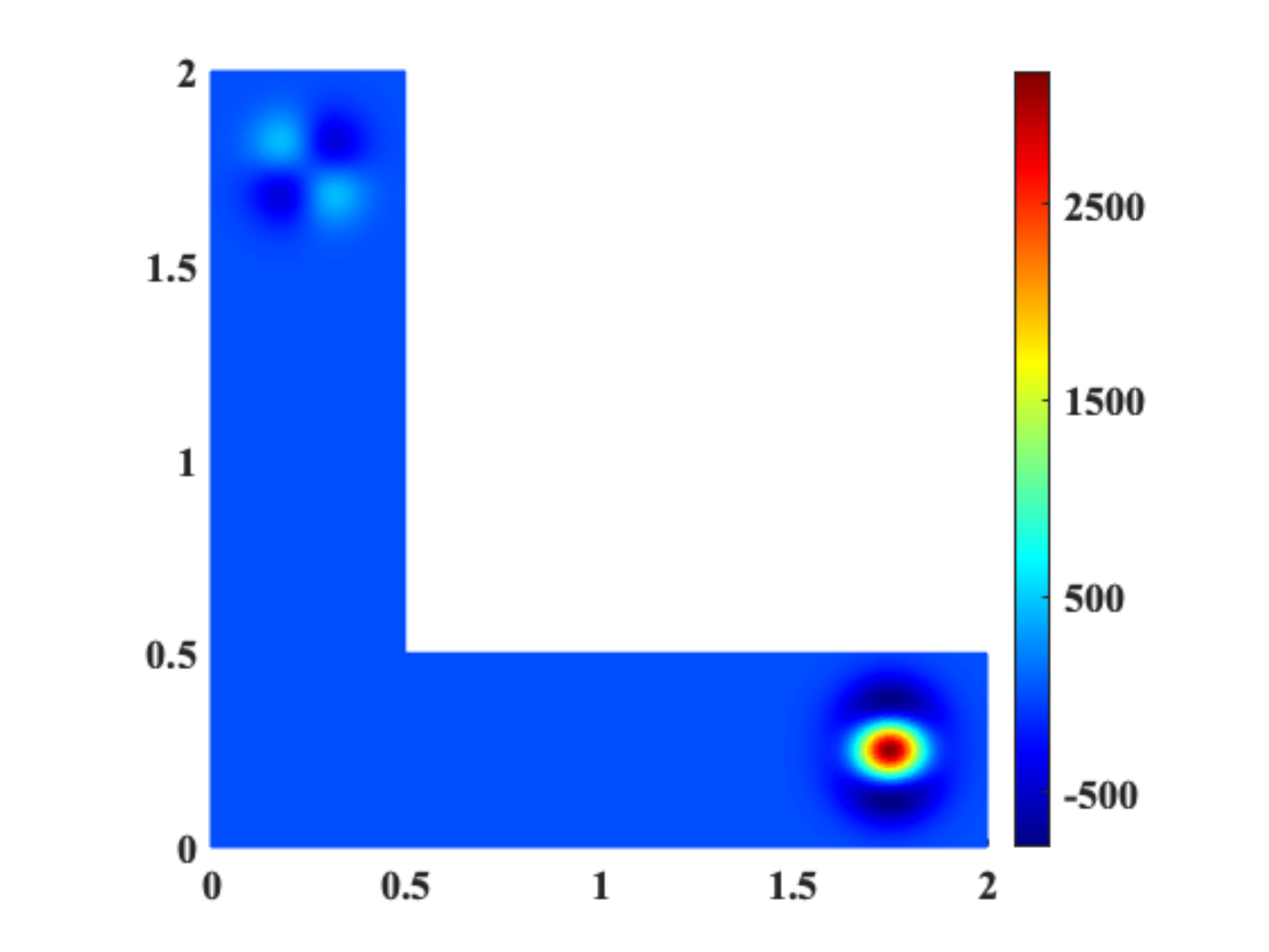}
\end{minipage}
\caption{Example 3. Behaviour of the source components $f_1$ (left) and $f_2$ (right).} 
\label{fig:f12}
\end{figure}
\begin{figure}[h!]
\centering
\begin{minipage}[b]{0.45\textwidth}
	\includegraphics[width=\textwidth]{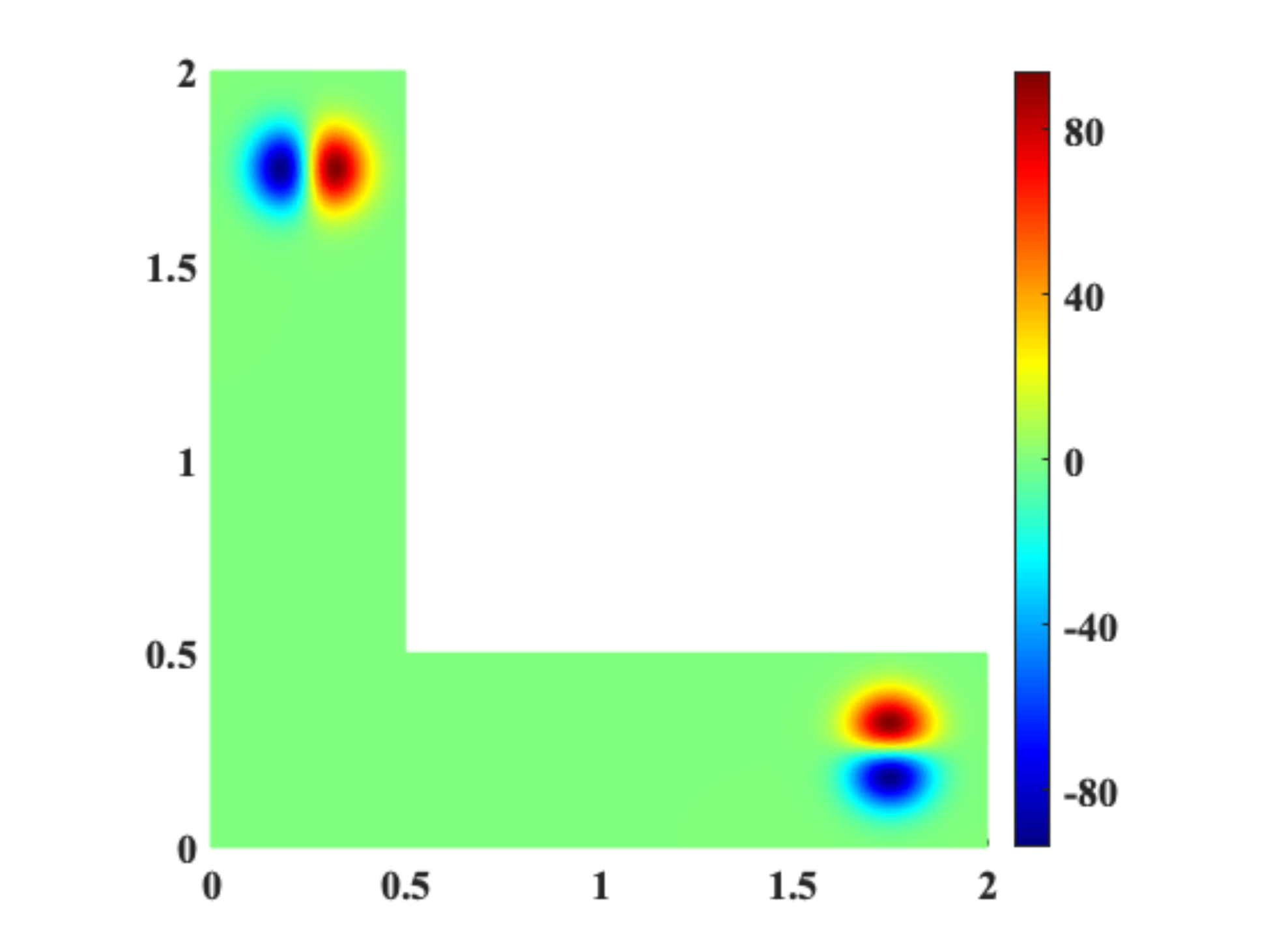}
\end{minipage}
\begin{minipage}[b]{0.45\textwidth}
	\includegraphics[width=\textwidth]{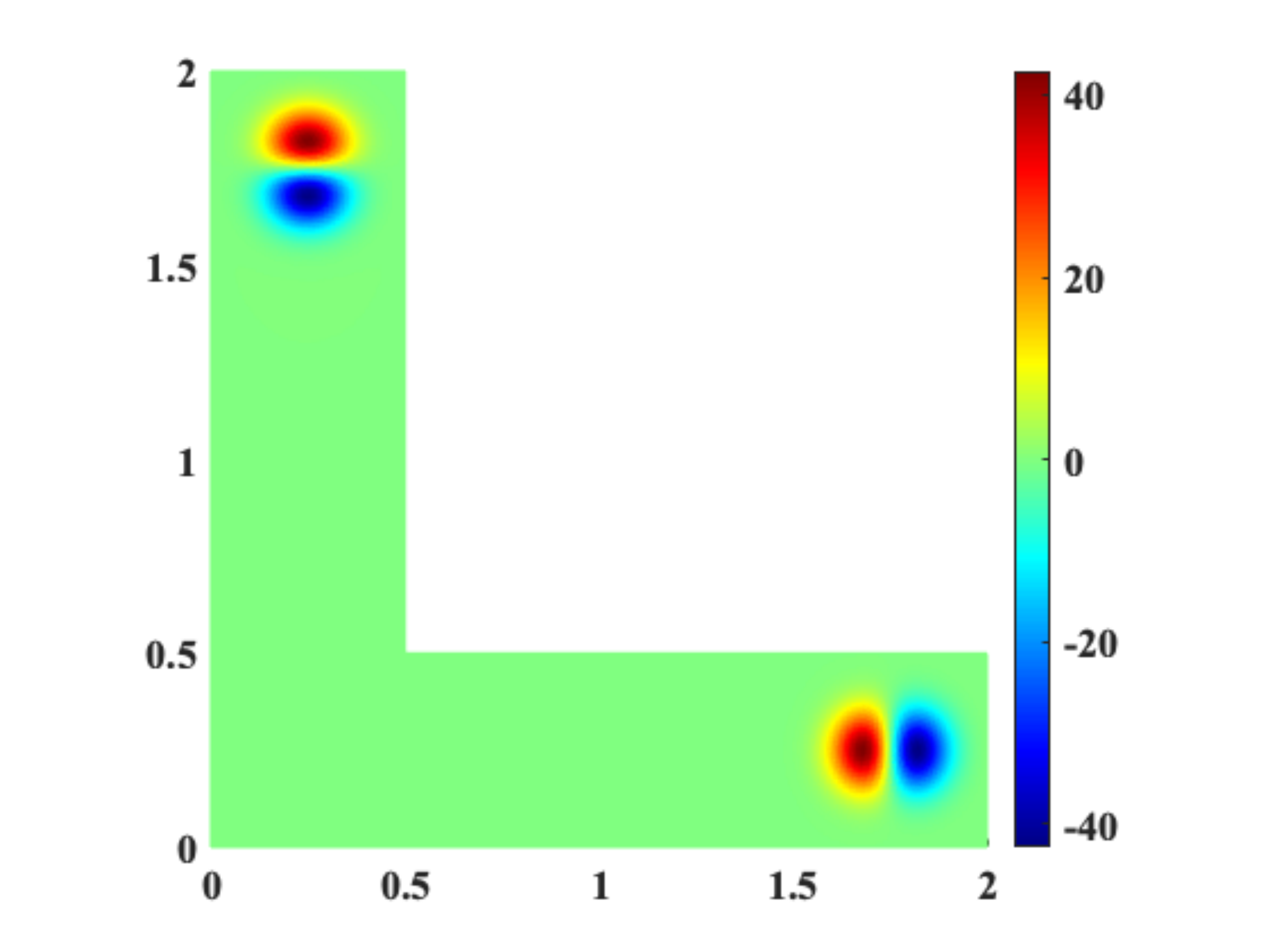}
\end{minipage}
\caption{Example 3. Behaviour of $f_{h_P}^P$ (left) and $f_{h_S}^S$ (right).} \label{fig:f12PS}
\end{figure}
We compare the numerical solutions computed by applying the linear \textit{vector} VEM and the second order ($k_P=k_S=2$) \textit{scalar} one associated to the same tessellation with mesh size $h=h_P=h_S=3.19e-02$. In Figures \ref{fig:u1vettscal} and \ref{fig:u2vettscal} we plot the absolute errors of the two entries of the approximate solutions $\bu_h$ and $\bu_{\bh}^C$, with respect to the exact one. As expected, the maximum absolute error for the linear vector procedure is of the same order of magnitude of the quadratic scalar one. Furthermore, we point out that the VEM matrices related to the solution of \eqref{pbdualf} and \eqref{pbduald} are reused in the subsequent resolution of the scalar scheme, so that the extra computational cost to retrieve the approximations of $f^P$ and $f^S$ is negligible with respect to the overall one.
\begin{figure}[h]
 \centering
 \begin{minipage}[b]{0.38\textwidth}
 	\includegraphics[width=\textwidth]{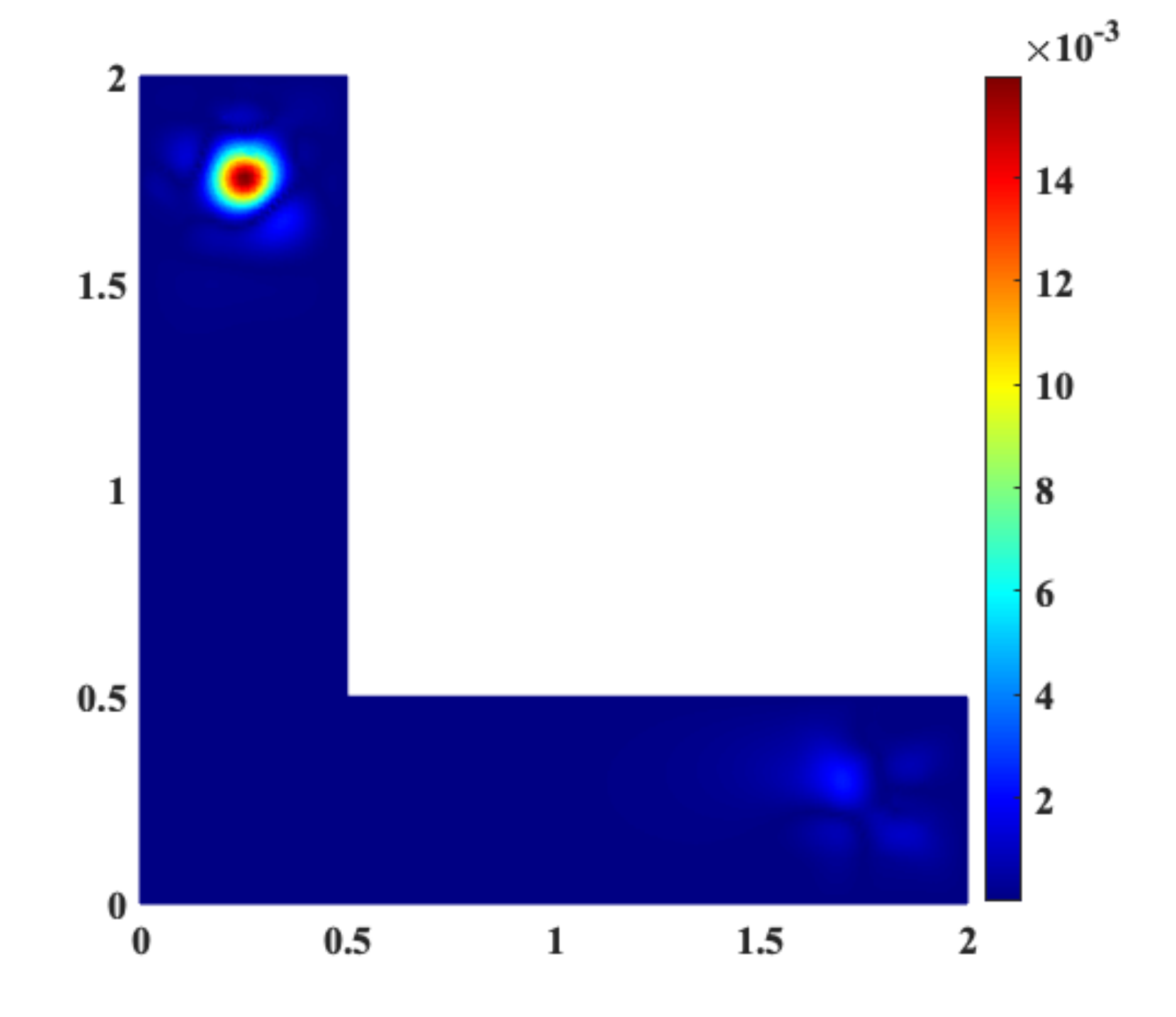}
\end{minipage}
\begin{minipage}[b]{0.44\textwidth}
	\includegraphics[width=\textwidth]{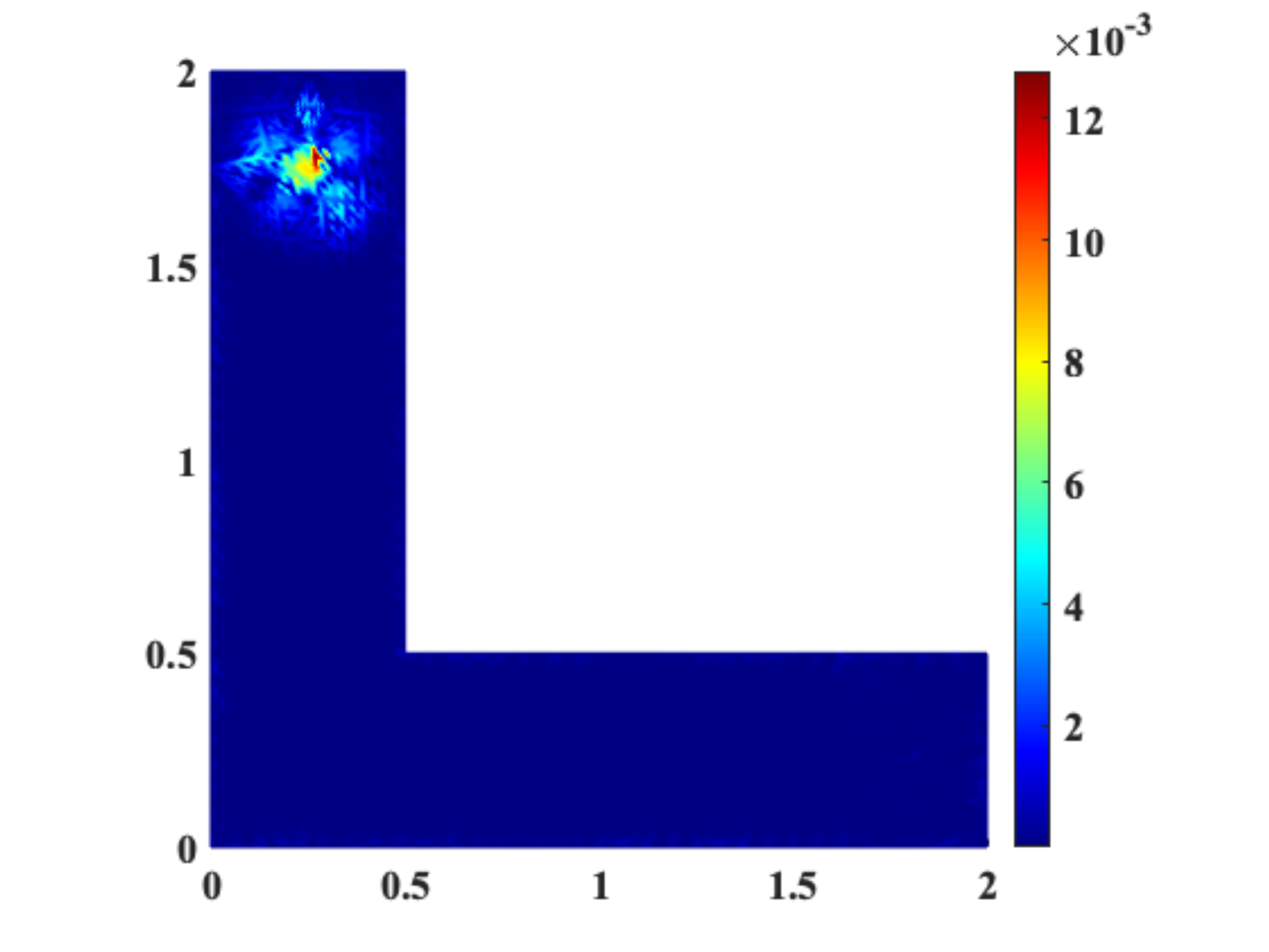}
\end{minipage}
\caption{Example 3. Absolute errors of $u_1$ obtained with the \textit{vector} VEM (left) and the \textit{scalar} VEM (right).} \label{fig:u1vettscal}
\end{figure} 
\begin{figure}[h!]
\centering
\begin{minipage}[b]{0.45\textwidth}
	\includegraphics[width=\textwidth]{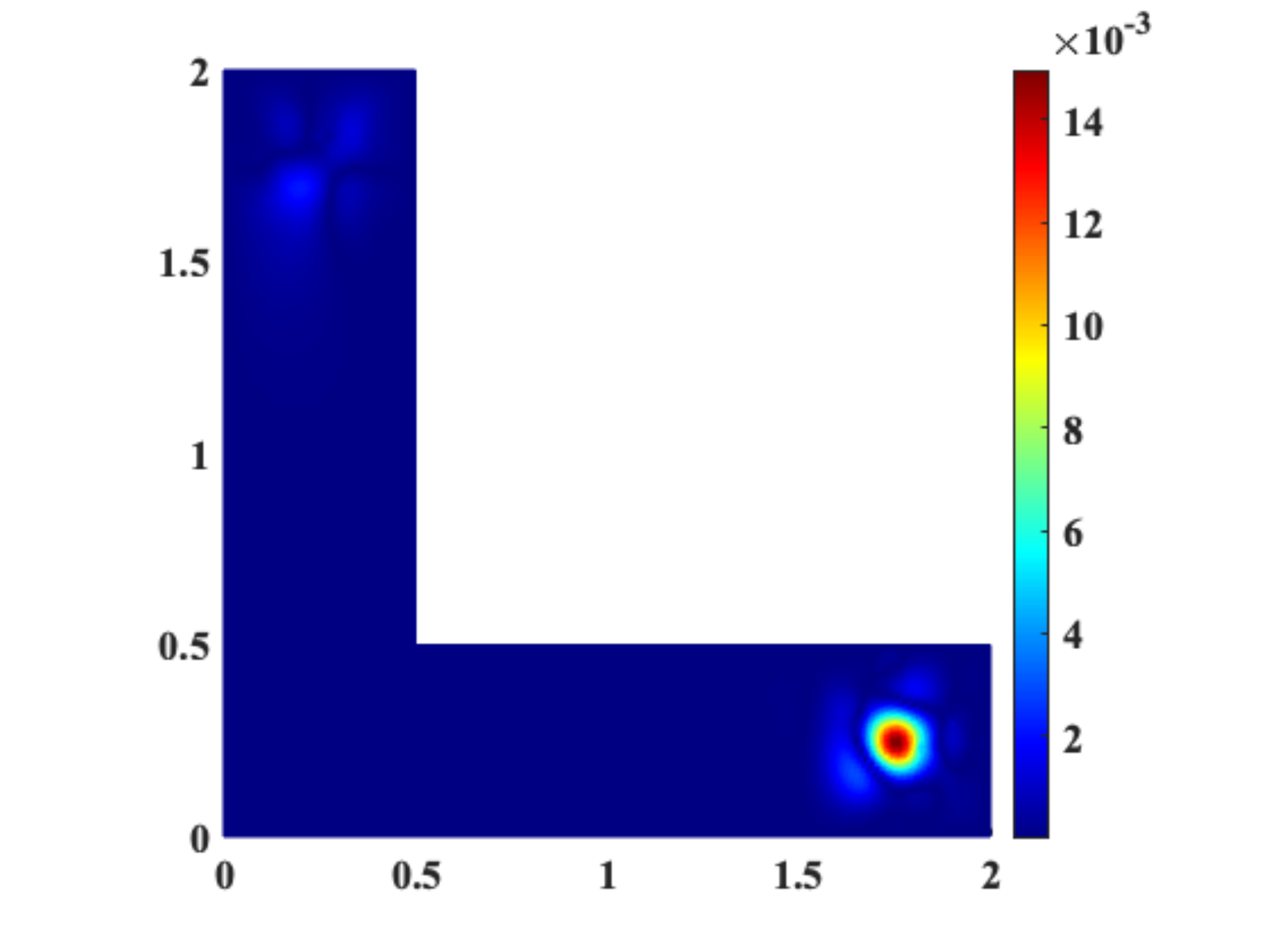}
\end{minipage}
\begin{minipage}[b]{0.445\textwidth}
	\includegraphics[width=\textwidth]{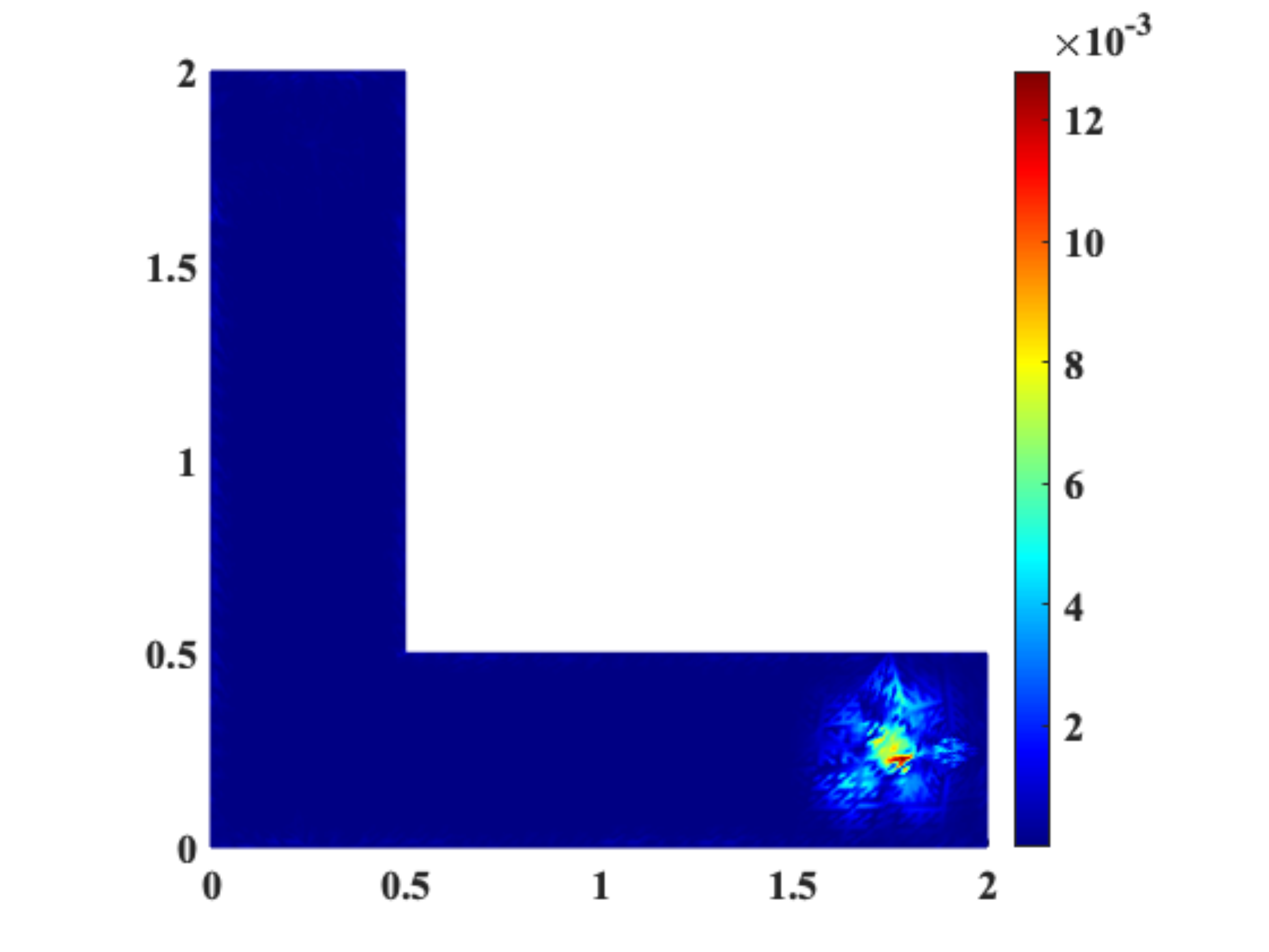}
\end{minipage}
\caption{Example 3. Absolute errors of $u_2$ obtained with the \textit{vector} VEM (left) and the \textit{scalar} VEM (right).}  \label{fig:u2vettscal}
\end{figure}

\

\noindent\textbf{Example 4.} In this last example we aim at showing the feasibility of the \textit{scalar} approach to deal with curved geometries, by using well-established tools of the curved VEM for scalar Helmholtz problems.
Indeed, for this test, we have applied the curvilinear version of the VEM used in  \cite{DesiderioFallettaFerrariScuderi20221} and, for the construction of our final linear system, we have used the same VEM matrices therein involved. This aspect turns out to be an advantage of the \textit{scalar} approach with respect to the \textit{vector} one for which, to the best of our knowledge, nowadays in literature there are no results related to curved geometries, either theoretical or numerical.
   
To test the convergence rate, we consider Problem \eqref{pb_iniziale} defined in the unit disk $\Omega$, with $\kappa=1$, $\rho=1$, $\mu=1$ and $\lambda=10$.
We choose $\mathbf{f} = \nabla f^P + \bccurl f^S$ with
\begin{align*}
	f^P(x_1,x_2) & = -24e^{x_2}(\cos x_2 -x_1^2\sin x_2 )-x_1^2e^{x_2}\cos x_2 ,
	\\ f^S(x_1,x_2) & = -2e^{x_1}(\sin x_1 +x_2^2\cos x_1)-x_2^2e^{x_1}\sin x_1,
\end{align*}
so that the corresponding solution is
\begin{align*}
	\bu(x_1,x_2) =
	\begin{bmatrix}
		2(x_1e^{x_2}\cos x_2 +x_2e^{x_1}\sin x_1 )
		\\ x_1^2e^{x_2}(\cos x_2 -\sin x_2) - x_2^2e^{x_1}(\cos x_1 +\sin x_1)
	\end{bmatrix}
\end{align*}
and the $P-$ and $S-$ waves are
\begin{equation*}
	\phi^P(x_1,x_2) = x_1^2e^{x_2}\cos x_2, \quad \phi^S(x_1,x_2) = x_2^2e^{x_1}\sin x_1 .
\end{equation*}
In Figure \ref{fig:L2circ} we plot the absolute $L^2$-norm error of the solutions obtained with approximation orders $k_P = k_S \in \{1,2,3,4,5\}$, with respect to the mesh size $h = h_P = h_S$. Since the curvilinear VEM permits to avoid the approximation of the geometry, the error on the solution depends only on the VEM approximation and, hence, the convergence rate is the expected optimal one.

\begin{figure}[h]
	\centering
	\begin{minipage}[b]{0.5\textwidth}
		\includegraphics[width=\textwidth]{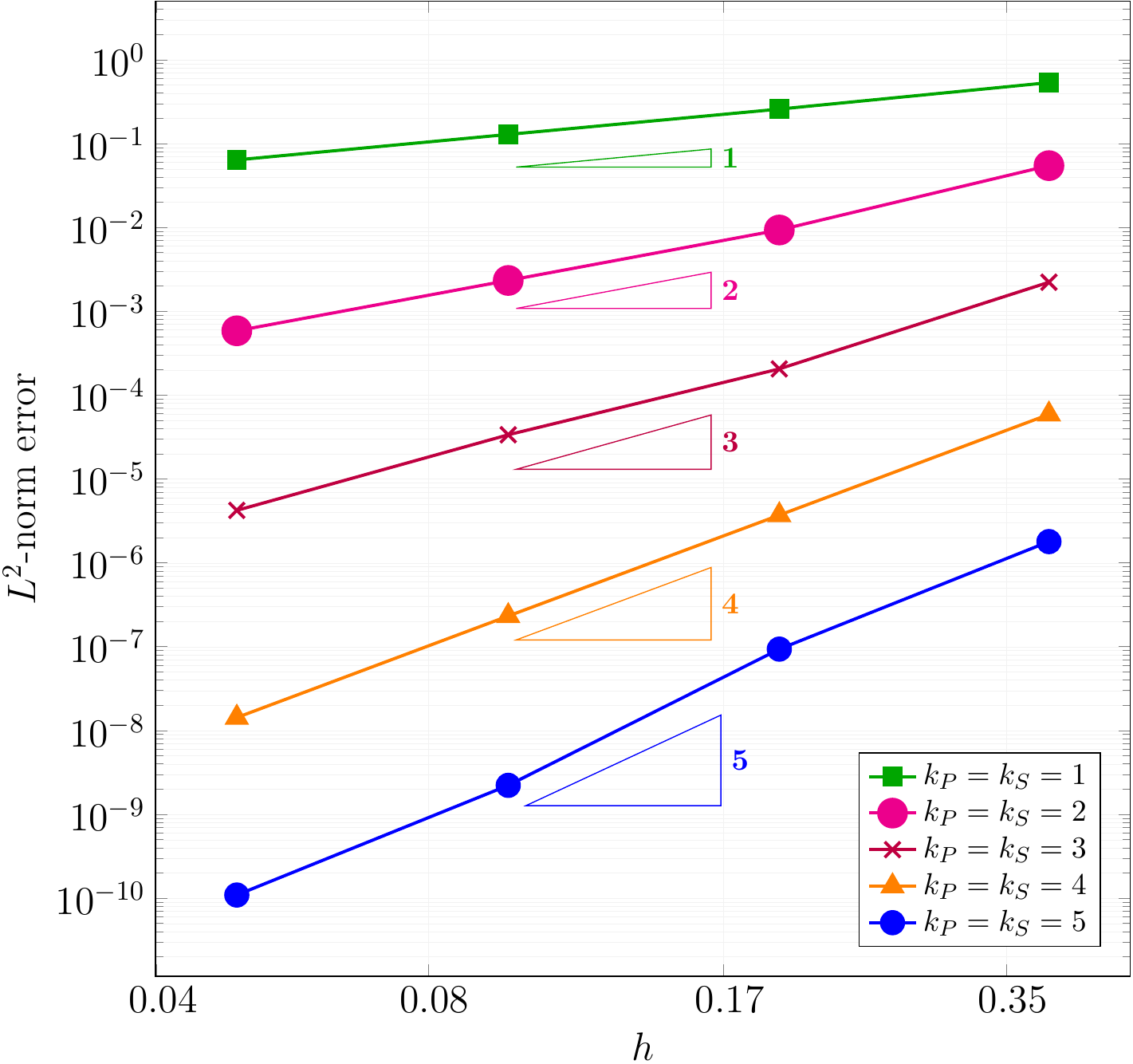}
	\end{minipage}
	\caption{Example 4. $L^2$-norm absolute error for the \textit{scalar} curved VEM by varying $k_P=k_S$.}  \label{fig:L2circ}
\end{figure}

\section{Conclusion}
In this work, we have proposed a novel approach for the numerical simulation of two dimensional time-harmonic elastodynamics problems. It consists in reformulating the original PDE in terms of two coupled wave equations involving, as new unknowns, the $P-$ and $S-$ waves scalar potentials.

We have provided the stability analysis of the scalar system by means of the (non classical) T-coercivity of the bilinear form associated with the variational formulation and, for its approximation, we have applied a virtual element method. Convergence estimates have been derived and confirmed by numerical test.

This approach turns out to be a valid and competitive alternative to the vector displacement-based one. In particular, an advantage that we have highlighted for the scalar formulation is its feasibility in using different approximation orders and/or mesh sizes of the domain tessellation. This aspect revealed to be crucial when dealing with materials in which $P-$ and $S-$ waves are associated to different wave numbers, since it permits to tune the approximation parameters accordingly. 
Furthermore, from the implementation view point, the proposed approach allowed us to use the well-established in-house software developed for standard Helmholtz problems, easily including curved geometries in the numerical investigation.

As a future development, we aim at extending the analysis of this approach to curved VEM as well as to exterior elastic problems, combining the interior VEM with a boundary one, both in the time-harmonic and in the space-time case. In particular, for the space-time case, we aim at making use of the numerical scheme for the classical two dimensional wave equation proposed in \cite{DesiderioFallettaFerrariScuderi2023}.

\section{Acknowledgments}
This work was performed as part of the GNCS-INDAM 2022 research program ``Metodi numerici avanzati VEM e VEM-BEM per PDEs: propriet{\`a} teoriche e aspetti computazionali''. The second author was partially supported by MIUR grant ``Dipartimenti di Eccellenza 2018-2022", CUP E11G18000350001.

\end{document}